\documentclass[12pt,reqno,oneside]{amsart}

\usepackage{latexsym,amssymb,amsmath,amsthm,amsfonts,
caption,subcaption,tikz,comment,mathtools}
\usepackage{dirtytalk,times,enumitem}
\usepackage{pgfplots}
\usetikzlibrary{
intersections, arrows.meta,
automata,er,calc,
backgrounds,
mindmap,folding,
patterns,
decorations.markings,
fit,decorations,
shapes,matrix,
positioning,
shapes.geometric,3d,
arrows,through, graphs, graphs.standard,
pgfplots.fillbetween
}

\usepackage[alphabetic]{amsrefs}

\usepackage[colorlinks]{hyperref}
\usepackage[capitalise, noabbrev]{cleveref}
\hypersetup{
  linkcolor=[rgb]{0.2,0.2,0.6}, 
  citecolor=[rgb]{0.2, 0.6, 0.2}, 
  urlcolor=[rgb]{0.6, 0.2, 0.2} 
}

\usepackage{blkarray}
\usepackage{kbordermatrix}

\numberwithin{equation}{section}
\usetikzlibrary{positioning}
\usetikzlibrary{arrows}

\newtheorem{theorem}{Theorem}[section]
\newtheorem{lemma}[theorem]{Lemma}
\newtheorem{proposition}[theorem]{Proposition}
\newtheorem{corollary}[theorem]{Corollary}

\newtheorem{question}[theorem]{Question}
\newtheorem{openproblem}[theorem]{Open Problem}

\theoremstyle{definition}
\newtheorem{definition}[theorem]{Definition} 
\newtheorem{remark}[theorem]{Remark}
\newtheorem{example}[theorem]{Example}
\newtheorem{acknowledgement}{Acknowledgement}

\newcommand{\K}{\mathbb{K}}
\newcommand{\C}{\mathbb{C}}
\newcommand{\N}{\mathbb{N}}
\newcommand{\Z}{\mathbb{Z}}
\newcommand{\R}{\mathbb{R}}

\newcommand{\supp}{\text{supp}}
\newcommand{\tuple}[1]{\langle #1\rangle}
\newcommand{\st}{\colon}

\newcommand{\A}{\mathcal{A}}

\DeclareMathOperator{\lk}{link}

\DeclareMathOperator{\htt}{height}

\newcommand{\qand}{\quad \mbox{and} \quad}

\newcommand{\PP}{\mathcal{P}}

\newcommand{\V}{\mathcal{V}}

\newcommand{\bv}{\mathbf{v}}

\newcommand{\flavio}[1]{\textcolor{teal}{Flavio: #1}}
\newcommand{\barbara}[1]{\textcolor{blue}{Barbara: #1}}

\begin{document}
 
\title{Incidence toric ideals and three-point functions}

\author[B. Betti]{Barbara Betti}
\address[Barbara Betti]
{Otto von Guericke University Magdeburg, Faculty of Mathematics (FMA), Institute of Algebra and Geometry (IAG),
Universitätsplatz 2, 39106 Magdeburg,
Germany.}
\email{barbara.betti@ovgu.de}

\author[S. Grate]{Sean Grate}
\address[Sean Grate]
{Department of Mathematics,
Iowa State University,
396 Carver Hall,
Ames, IA 50011,
USA}
\email{sgrate@iastate.edu}

\author[T. Holleben]{Thiago Holleben}
\address[Thiago Holleben]
{Department of Mathematics \& Statistics,
Dalhousie University,
6297 Castine Way,
PO BOX 15000,
Halifax, NS,
Canada B3H 4R2}
\email{hollebenthiago@dal.ca}

\author[F. Salizzoni]{Flavio Salizzoni}
\address[Flavio Salizzoni]
{MPI für Mathematik in den Naturwissenschaften,
Inselstraße 22, 04103, Leipzig,
Germany}
\email{flavio.salizzoni@mis.mpg.de}

\keywords{toric ideals, three-point functions, polytopes, null designs, balanced manifolds}
\subjclass[2020]{05E40, 13F65, 05B05}

 
\begin{abstract}
    We study the ideal of the algebraic relations among $3$-point functions from a combinatorial and topological perspective. We place this problem in the broader setting of incidence toric ideals associated with incidence matrices of $t$-subsets contained in $k$-subsets of $n$ elements. Generators of these ideals admit combinatorial interpretations as null $t$-designs and topological interpretations as balanced orientable normal $d$-pseudomanifolds without boundary. Generators arising from octahedra play a fundamental role in the structure of these ideals.
\end{abstract}

\maketitle



\section{Introduction}

    In this paper, we study toric ideals generated by the relations among the Laurent monomials 
    \begin{equation} \label{equation:3pf}
    C_{i,j,k}=\frac{1}{p_{i,j}p_{j,k}p_{i,k}},
    \end{equation}
    for $1\leq i<j<k\leq n$, where the $p_{i,j}$ are pairwise invariants arising from a physics setting that we now briefly introduce.  
    
    The momenta of $n$ particles, considered as points in $\mathbb{C}^d$, are given by vectors $p_1,\dots,p_n\in~\C^d$, where $p_i=(x_{1,i},\dots,x_{d,i})$. Endowing $\C^d$ with the Lorentzian inner product, we obtain the generators of the invariant ring of $\C[x_{s,t}]$ under the $\mathrm{SO}(1,d-1)$-action as:
\[      
    p_{i,j} \colon= p_i\cdot p_j=- x_{1,i}x_{1,j}+x_{2,i}x_{2,j}+\dots+x_{d,i}x_{d,j}, \quad 1\leq i<j\leq n.
\]
    Here, $\mathrm{SO}(1,d-1)$ denotes the group of $d\times d$ special orthogonal matrices that preserve the inner product \cite[Section~12]{weyl1946classical}. The pairwise inner products $p_{i,j}$ are known as \emph{Mandelstam invariants} in physics. For fixed $n$, the ideals $\tilde{I}_{n,d}\subseteq \C[c_{i,j,k}]$ of algebraic relations among the Laurent monomials in \cref{equation:3pf} form a stabilizing descending chain
    \[ \tilde{I}_{n,1} \supseteq \tilde{I}_{n,2} \supseteq \cdots \supseteq \tilde{I}_{n,n}=\tilde{I}_{n,n+1}=\cdots \]
    The $p_{i,j}$ are algebraically independent if and only if $n\leq d$ (see \cite[Theorem~2.5]{rajan2024kinematic}). In this case, the ideal $\tilde I_{n,d}$ is toric and it can be computed by looking at the relations among the cubic monomials $c_{i,j,k}\coloneqq C_{i,j,k}^{-1}$. With this interpretation, $\tilde I_{n,d}$ is an ideal in $\C[p_{i,j}:1\leq i< j \leq n]$.
    
    These ideals naturally arise in the study of three-point functions in Conformal Field Theory (CFT). If the particles are massless, the norms squared of the momenta are zero: $p_{i,i}=0$. 
    In CFT, the massless condition allows to not break the symmetry by introducing a preferred length scale. 
    Correlations between quantum field operators at $n$ spacetime points are called $n$-point functions and serve as observables in CFT.
    Higher-point functions are more complicated, but in many cases they can be reduced to $2$- and $3$-point functions through operator product expansions (OPE). Equation $2.38$ in \cite{rychkov2017epfl} gives a first example of $3$-point function as the correlator $\langle \varphi(p_i)\varphi(p_j)\varphi(p_k)\rangle$ of three primary scalar fields of dimension $\Delta_i,\Delta_j$ and $\Delta_k$: 
    \begin{equation}\label{equation:3pointfunctionprimaryfield}
    C_{i,j,k}:=\langle \varphi(p_i)\varphi(p_j)\varphi(p_k)\rangle= c\cdot \left(p_{i,j}^{\frac{\Delta_i+\Delta_j-\Delta_k}{2}}\cdot p_{i,k}^{\frac{\Delta_i+\Delta_k-\Delta_j}{2}}\cdot p_{j,k}^{\frac{\Delta_j+\Delta_k-\Delta_i}{2}}\right) ^{-1}, \ c\in\C.   
    \end{equation}

    Assuming $\Delta_1=\dots=\Delta_n=2$, Equation~\eqref{equation:3pointfunctionprimaryfield} simplifies, up to a constant, to the Laurent monomials in \cref{equation:3pf}.  
    In this paper, we study such simplified 3-point functions from an algebraic and combinatorial point of view.

    We discuss the toric case, namely when $n\leq d$, within the more general framework of \emph{incidence toric ideals} (\cref{def:inctorideal}). These are defined as the toric ideals associated with the incidence matrices of the $(t,k)$-incidence complexes of the $(n-1)$-dimensional simplex. Such matrices encode containments of $t$-subsets inside $k$-subsets of $n$ elements. We show they have maximal rank by using foundational results due to Stanley on the theory of Lefschetz properties of artinian algebras (\cref{prop: A_nkt full rank}). Minimal generators of the incidence toric ideal $I_{n,k,t}$ arise from connections with \emph{null $t$-designs}, that are balanced functions used in combinatorial design theory. This connection allows to bound from below the degrees of generators and to show neighbourliness of the associated rational polytope (\cref{theorem:neighborly}).
    
    In \Cref{sec: balanced manifolds} we move to the topological point of view. We relate minimal generators in a Graver basis of $I_{n,k,k-1}$ to balanced orientable normal $d$-pseudomanifolds without boundary. Notice that this class of incidence toric ideal still includes our main motivating class of ideals of relations of $3$-point functions, as $\tilde{I}_{n,n}=I_{n,3,2}$. Binomials coming from balanced triangulations of (hyper)octahedra carry most of the geometric structure of $I_{n,k,t}$, as, up to saturation, they generate the same ideal. Moreover, the exponents of these binomials arise in representation theory as Specht polynomials (see~\cite[Section 9.3]{lefschetzbook}).
    We discuss this relation in \cref{ss:specht}. 

    In \Cref{sec: three point functions} we discuss the first  non-toric case of the ideals $\tilde{I}_{n,d}$. We show that $\tilde{I}_{n,n-1}$ is obtained by adding a principal ideal to $\tilde{I}_{n,n}$ and saturating with respect to the $c_{i,j,k}$.

\section{Preliminaries}
\subsection{Toric ideals}


    Toric ideals are a central bridge between commutative algebra and combinatorics. They are binomial prime ideals arising as kernel of (Laurent) monomial maps. Let $\mathbb{K}$ be a field and $A=({\bf a}_1 \ \cdots \ {\bf a}_n) \in \Z^{d\times n}$ be the matrix defining the $\mathbb{K}$-algebra homomorphism 
    \[ \pi:  \mathbb{K}[x_1,\dots,x_n]\rightarrow \mathbb{K}[t_1^\pm,\dots, t_d^\pm], \ x_i\mapsto t_i^{{\bf a}_i}.\]
    The toric ideal $I_A$ associated with $A$ is the kernel of $\pi$. Algebraic properties of $I_A$ are encoded by $A$ and the polytope $\mathcal{P}={\rm conv}(A)\subseteq \R^d$, obtained as the convex hull of ${\bf a}_1,\dots,{\bf a}_n\in \R^d$. We recall some well-known facts and we refer to \cite[Chapter~4]{sturmfelsgroebner} for further details. The Krull dimension of the quotient by $I_A$ is the dimension of the $\Z$-lattice spanned by $A$, that equals the rank of $A$. Moreover, if $I_A$ is homogeneous, $\mathcal{V}(I_A)$ is a projective variety whose degree is the normalized volume of $\mathcal{P}$. 
    Binomials of the form $x^{u_+}-x^{u_-}$, where $u=u_+-u_-\in {\rm ker}_\Z(A)$ form a $\mathbb{K}$-basis of $I_A$. Such a binomial is \emph{primitive} if there is no other vector $v= v_+-v_-\in {\rm ker}_\Z(A)$ such that $v_+< u_+$ and $u_+<u_-$ coordinate-wise. The set of all primitive binomials forms the \emph{Graver basis} of $I_A$.
    Any finite generating set of binomials for $I_A$ extracted from the binomials in $\{x^{u_+}-x^{u_-}: \ u_+-u_-\in {\rm ker}_\Z(A)\}$ forms a \emph{Markov basis
    } for $I_A$ \cite[Theorem~3.1]{SturmfelsDiaconis}. 

\begin{definition}
    A finite set $\mathcal{M}\subseteq  {\rm ker}_\Z(A)$ is a \emph{Markov basis} of $I_A$ if for every pair $u,v \in \N^n$ with $A(u-v)=0$, there are $b_1,\dots,b_r\in \mathcal{M}$ such that $u=v+b_1+\dots+b_r$ and the partial sums $v+b_1+\cdots+b_c$ have non-negative coordinates for every $c=1,\dots,r$.  
\end{definition}

    A significant class of toric ideals appears from simple graphs. The \emph{edge ring} of a graph $G$ is the toric algebra generated by monomials $x_e= x_ix_j$ such that $e=(i,j)$ is an edge of the graph. The defining ideal of the edge ring is the toric ideal $I_G$, that is the kernel of the homomorphism
    \[ \mathbb{K}[y_e: e\in E(G)]\rightarrow \mathbb{K}[x_e: e\in E(G)], \ y_e\mapsto x_e. \]
    A finite set of binomial generators is given by primitive even closed walks \cite[Proposition~3.1]{Villarreal01011995}. An even closed walk $w=\{e_1,\dots,e_{2k}\}\subseteq E(G)$ is a sequence of an even number of edges where each consecutive pair $e_i, e_{i+1}$ shares a common vertex, including the first and the last edge. A primitive even closed walk is an even closed walk that cannot be decomposed into smaller even closed walks. A (primitive) even closed walk identifies the (primitive) binomial 
    \[f_w = y_{e_1}y_{e_3}\cdots y_{e_{2k-1}} - y_{e_2}y_{e_4}\cdots y_{e_{2k}}\in I_G.\]
    
    In \cite{PetrovicStasi} the authors propose a natural generalization of even closed walks from graphs to hypergraphs under the name of \emph{monomial walks}. 
    Edges are replaced by hyperedges and the alternating structure of an even closed walk is reformulated in terms of balanced edge multisets. 
    Hyperedges are partitioned into two parts where every vertex appears with the same multiplicity in both parts.
    As in the graphical case, monomial walks correspond to binomials in the associated toric ideal. 
    However, primitive binomials arising from monomial walks in hypergraphs do not necessarily have squarefree support as balancing hyperedges allows the use of the same vertex repeatedly.
    We adopt the setting of \cite{PetrovicStasi} and reinterpret it in the language of simplicial complexes. 
    This translation allows us to treat monomial walks as combinatorial objects supported on facets of a simplicial complex, extending the correspondence between balanced walks and binomials in toric ideals beyond the graphical case. 
    



\subsection{Balanced simplicial complexes}

    A \emph{simplicial complex} on $n$ vertices is a collection $\Delta$ of subsets of $[n]$, called \emph{faces}, that is closed under inclusion. 
    A face $F$ has dimension $\dim(F)=|F|-1$ and the dimension of a simplicial complex $\Delta$ is the maximum of the dimensions of its faces. 
    An inclusion-maximal face is called a \emph{facet} and a face of dimension zero is a \emph{vertex}. A simplicial complex is \emph{pure} if all the facets have the same dimension. 
    We write $\Delta=\langle F_1,\dots, F_s\rangle$ for the simplicial complex generated by the facets $F_1,\dots, F_s$.
    The \emph{incidence matrix} of $\Delta = \tuple{F_1, \dots, F_s}$ is the $s\times n$ matrix $M(\Delta)$ with entry $1$ in position $ij$ if $i\in F_j$ and $0$ otherwise. 
    The monomial map defined by the transpose of the incidence matrix of $\Delta$ induces a toric ideal associated to $\Delta$, which we will consider in later sections.

\begin{definition}[\cite{Stanley1979BalancedCC}]
    A simplicial complex $\Delta$ on $n$ vertices of dimension $d-1$ is \emph{balanced} if there exists a partition $[n]= V_1\sqcup \dots \sqcup V_d$ such that $|F\cap V_i|\leq 1$ for every face $F\in \Delta$. 
\end{definition}
    Equivalent characterizations are that the underlying graph of $\Delta$ is $d$-colorable and that each facet contains exactly one vertex of each color.
    In \Cref{sec: balanced manifolds} we study the balanced property for the following class of simplicial complexes. 

    A $d$-dimensional pure simplicial complex $\Delta$ is a~\emph{pseudomanifold} if the following hold:

    \begin{enumerate}
        \item $\Delta$ is strongly connected, that is, for every pair of facets $F, F'$ of $\Delta$, there exists a sequence of facets $F_0 = F, F_1, \dots, F_s = F'$ such that $F_i \cap F_{i + 1}$ is a $(d-1)$-dimensional face of $\Delta$ for every $0 \leq i \leq s-1$.
        \item Every $(d-1)$-dimensional face of $\Delta$ is contained in at most $2$ facets of $\Delta$.
    \end{enumerate}

    For a pseudomanifold $\Delta$ of dimension $d$, the simplicial complex $\partial \Delta$ whose facets are the $(d-1)$-dimensional faces of $\Delta$ contained in exactly one facet, is called the~\emph{boundary complex} of $\Delta$. If $\Delta$ is a pseudomanifold such that $\partial \Delta = \emptyset$, we say $\Delta$ is a pseudomanifold~\emph{without boundary}. A pseudomanifold without boundary $\Delta$ is said to be~\emph{orientable} if $\tilde H_d(\Delta; \Z) = \Z$. 

    \begin{example}
    Orientable pseudomanifolds without boundary include the class of simplicial complexes with a geometric realization homeomorphic to a $d$-dimensional sphere. Such complexes are called~\emph{simplicial spheres}.
    The standard example of a balanced $(d-1)$-sphere is the boundary of the $d$-dimensional crosspolytope. This is the polytope in $\R^d$ with $2d$ vertices obtained as the convex hull of $\{ \pm e_i \, \colon \, i=1,\dots, d\}$, where $e_i$ is the $i$-th element of the standard basis. In dimensions $d=2$ and $d=3$, the boundary of the crosspolytope are, respectively, a square and an octahedron, which are a balanced $1$-sphere and a balanced $2$-sphere (see \cref{figure:cross-polytope}).

    {\centering
    \begin{figure}[h!]
    \begin{tikzpicture}[scale=1.5]
            \node[draw,circle,fill=black,scale=0.2,label={[label distance=-1pt]210:$e_1$}] at (xyz spherical cs:radius=1.41,latitude=135,longitude=90) (1) {$1$};
            \node[draw,circle,fill=black,scale=0.2,label={-45:$-e_2$}] at (xyz spherical cs:radius=1.41,latitude=-135,longitude=90) (2) {$2$};
            \node[draw,circle,fill=black,scale=0.2,label={[label distance=-1pt]0:$-e_1$}] at (xyz spherical cs:radius=1.41,latitude=-45,longitude=90) (3) {$3$};
            \node[draw,circle,fill=black,scale=0.2,label={[label distance=-5pt]-30:$e_2$}] at (xyz spherical cs:radius=1.41,latitude=45,longitude=90) (4) {$4$};
            \node[draw,circle,fill=black,scale=0.2,label={[label distance=-1pt]150:$e_3$}] at (xyz spherical cs:radius=1.82) (5) {$5$};
            \node[draw,circle,fill=black,scale=0.2,label={[label distance=-1pt]210:$-e_3$}] at (xyz spherical cs:radius=1.82,latitude=180) (6) {$6$};

            \begin{scope}[on background layer]
                \path[fill=blue,opacity=0.25] (3.center) to (4.center) to (5.center) to (3.center);
                \path[fill=red,opacity=0.25] (4.center) to (1.center) to (5.center) to (4.center);
                \path[fill=red,opacity=0.25] (3.center) to (4.center) to (6.center) to (3.center);
                \path[fill=blue,opacity=0.25] (4.center) to (1.center) to (6.center) to (4.center);
            \end{scope}
            
            \draw[very thick, dashed] (1) -- (2) -- (3);
            \draw[very thick] (3) -- (4) -- (1);
            \draw[very thick] (1) -- (5);
            \draw[very thick, dashed] (5) -- (2);
            \draw[very thick] (3) -- (5) -- (4);
            \draw[very thick] (1) -- (6);
            \draw[very thick, dashed] (6) -- (2);
            \draw[very thick] (3) -- (6) -- (4);
        \end{tikzpicture}
        \caption{The crosspolytope for $d=3$: an octahedron, obtained as the convex hull of $\pm e_1$, $\pm e_2$, and $\pm e_3$. The alternating red and blue coloring reflects that its boundary is a balanced $2$-sphere (see~\cref{l:bipartitedual}).}
        \label{figure:cross-polytope}
        \end{figure}}
\end{example}

The~\emph{link} of a simplicial complex $\Delta$ with respect to a face $\sigma \in \Delta$, is the simplicial complex $\lk_\Delta(\sigma) = \{\tau \st \tau \cap \sigma = \emptyset \qand \tau \cup \sigma \in \Delta\}$. The links of a simplicial complex $\Delta$ contain information about the local properties of $\Delta$. A $d$-dimensional pseudomanifold $\Delta$ is~\emph{normal} if $\lk_\Delta(\sigma)$ is connected for every face $\sigma$ of $\Delta$ of dimension $k \leq d - 2$. The main examples of normal pseudomanifolds are simplicial spheres. A non-normal (orientable) pseudomanifold is any triangulation of the~\emph{pinched torus} (\Cref{figure:pinched torus}). Triangulations of the pinched torus can be obtained from a triangulation of a sphere after identifying two vertices that do not appear together in the link of any nonempty face. Since the link of the identified vertex is not connected, the triangulation is not normal.

{\centering

\begin{figure}[h!]
\includegraphics[scale = 0.3]{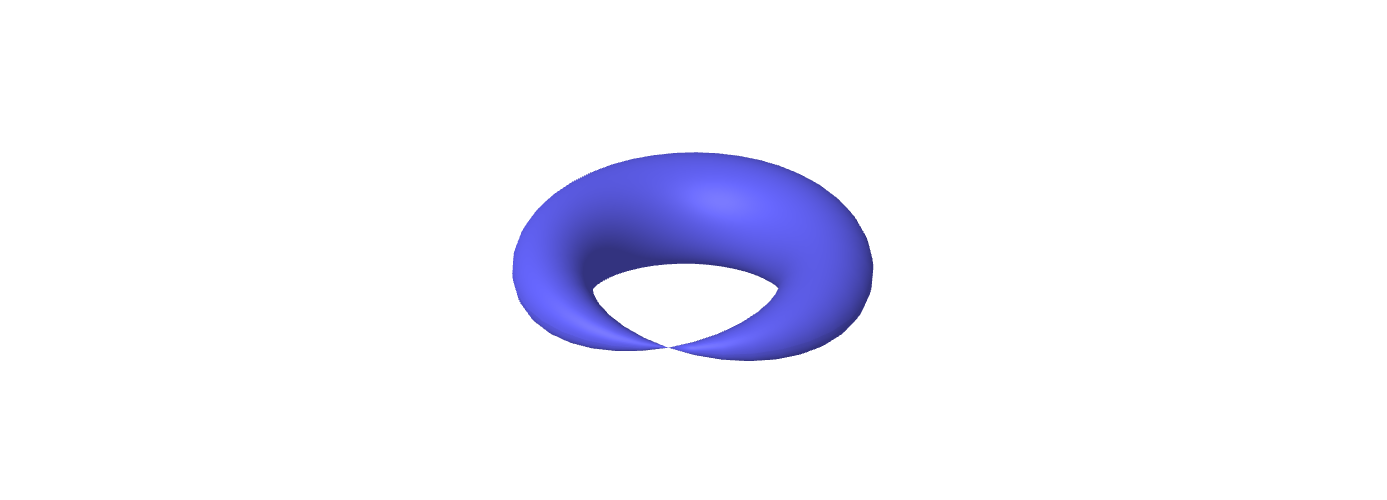}
\caption{The pinched torus: a topological space whose triangulations are non-normal orientable pseudomanifolds.}
\label{figure:pinched torus} 
\end{figure}}

Finally, given a $d$-dimensional pseudomanifold $\Delta = \tuple{F_1, \dots, F_s}$, the~\emph{facet-ridge graph} $G(\Delta)$ is the graph on vertex set $\{F_1, \dots, F_s\}$ and $\{F_i, F_j\}$ is an edge if and only if $F_i \cap F_j$ is a~\emph{ridge}, that is, a face of $\Delta$ dimension $d-1$.

\subsection{Lefschetz properties}
    A standard graded Artinian $\mathbb{K}$-algebra $A$ satisfies the \emph{weak Lefschetz property} (\emph{WLP}) if there is a general linear form $L \in A_1$ such that the induced multiplication maps $\times L: A_i \to A_{i + 1}$ have full rank for every $i$; such a linear form $L$ is called a \emph{weak Lefschetz element of $A$}. 
    If the iterated multiplication maps $\times L^j: A_i \to A_{i + j}$ have full rank for every $i,j>0$, then~$A$ satisfies the \emph{strong Lefschetz property} (\emph{SLP}) and we say that $L$ is a \emph{Lefschetz element} for $A$.

    Lefschetz properties serve as algebraic incarnations of the classical hard Lefschetz theorem.
    As one of the original applications of these Lefschetz-type theorems to problems in enumerative combinatorics, the proof of the necessity in the $g$-theorem invokes the hard Lefschetz Theorem~\cite{gtheorem}.
    Using this machinery, Stanley shows the Bruhat ordering of certain Weyl groups have the Sperner property~\cite{S80}.
    This amounts to showing that, in characteristic zero, Artinian $\mathbb{K}$-algebras obtained as quotients by monomial complete intersections have the SLP.
\begin{theorem}[{\cite{S80},\cite[Theorem~3.35]{lefschetzbook}}]\label{t:monomialslp}
    For any $d_1, \dots, d_n\in \N^*$ and any field $\K$ of characteristic $0$, the algebra $ \K[x_1,\dots,x_n]/(x_1^{d_1},\dots, x_n^{d_n})$ has the strong Lefschetz property.
\end{theorem}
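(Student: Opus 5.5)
The strategy is to argue by induction on the number of variables $n$. The key fact is that a tensor product of two graded Artinian algebras with the SLP again has the SLP, with Lefschetz element the sum of the two Lefschetz elements. Writing $A_n=\K[x_1,\dots,x_n]/(x_1^{d_1},\dots,x_n^{d_n})$, we have $A_n\cong A_{n-1}\otimes_\K \K[x_n]/(x_n^{d_n})$, so the whole statement follows from the base case $n=1$ together with this tensor product principle. For $n=1$ the algebra $\K[x]/(x^{d})$ has basis $1,x,\dots,x^{d-1}$, and multiplication by $x^j$ sends $x^i$ to $x^{i+j}$ or to $0$. Hence $\times x^j$ is injective or surjective in every degree, so $x$ is a strong Lefschetz element.

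For the tensor product step I would use the $\mathfrak{sl}_2$-representation viewpoint, which is where characteristic $0$ enters. A graded Artinian algebra $A$ with symmetric unimodal Hilbert function, socle degree $s$ and element $L$ has the SLP with Lefschetz element $L$ if and only if $\times L^{s-2i}\colon A_i\to A_{s-i}$ is an isomorphism for all $i\le s/2$. Monomial complete intersections are Gorenstein with symmetric Hilbert function, so this criterion applies. By Jacobson--Morozov, this condition is equivalent to the existence of an $\mathfrak{sl}_2$-action on $A$ in which $L$ acts as the raising operator $E$ and $H$ acts on $A_i$ by the scalar $2i-s$.

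On $\K[x]/(x^d)$ this action is explicit: take $E=x$ and $H=2i-(d-1)$ on degree $i$, and define $F$ on the basis by the standard formulas for the irreducible module of dimension $d$. Given such actions on $A$ and $B$, the tensor product $A\otimes B$ carries the diagonal action, with $E=L_A\otimes 1+1\otimes L_B$ and $H$ acting on $(A\otimes B)_k$ by $2k-(s_A+s_B)$. Finite-dimensional $\mathfrak{sl}_2$-representations in characteristic $0$ are completely reducible, and their weight strings are symmetric about $0$. Therefore $E^{m}$ maps the weight $-m$ space isomorphically onto the weight $m$ space, which is exactly the criterion above with $L=\sum_i x_i$.

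Finally, I would pass from the symmetric isomorphisms to full rank of every $\times L^j\colon A_i\to A_{i+j}$. This follows by decomposing $A$ into irreducible strings $v, Ev,\dots,E^{r}v$ that are centred around the middle degree: on each string, multiplication by $E^j$ is injective or surjective depending on position, and unimodality makes the choice uniform across strings for fixed $i,j$. Since $\sum x_i$ is a Lefschetz element and the SLP is an open condition, a general linear form is also a Lefschetz element.

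The main obstacle is the complete reducibility and weight-symmetry argument, which genuinely requires characteristic $0$ (or large characteristic). I would invoke it as standard representation theory rather than reprove it. As an alternative, one can use Stanley's original route through hard Lefschetz on $\prod \mathbb{P}^{d_i-1}$, whose cohomology ring over $\mathbb{Q}$ is $\bigotimes \mathbb{Q}[x_i]/(x_i^{d_i})$ with $\sum x_i$ ample, and then extend scalars to $\K$.
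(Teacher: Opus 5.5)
The paper gives no proof of this theorem; it only cites \cite{S80} and \cite[Theorem~3.35]{lefschetzbook}. Your $\mathfrak{sl}_2$ argument is essentially the standard proof behind the book's version: the SLP in the narrow sense (i.e.\ $\times L^{s-2i}\colon A_i\to A_{s-i}$ bijective) is preserved under tensor products of algebras with symmetric Hilbert functions, via $\mathfrak{sl}_2$-representations / Clebsch--Gordan in characteristic $0$. Your fallback via hard Lefschetz on $\mathbb{P}^{d_1-1}\times\cdots\times\mathbb{P}^{d_n-1}$ is Stanley's original argument. The proposal is correct.

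One statement needs correcting: the ``key fact'' in your first paragraph is false without symmetry. The algebra $A=\K[x,y]/(x,y)^2$ has the SLP, and $A\otimes_\K A$ has Hilbert function $(1,4,4)$. For every nonzero linear form $L=\ell_1\otimes 1+1\otimes\ell_2$, the element $\ell_1\otimes 1-1\otimes\ell_2$ is nonzero and satisfies $L\cdot(\ell_1\otimes 1-1\otimes\ell_2)=\ell_1^2\otimes 1-1\otimes\ell_2^2=0$. So $\times L\colon (A\otimes A)_1\to (A\otimes A)_2$ is never injective.

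This does not damage your proof, because what you actually carry out is the symmetric, narrow-sense version. You write down the $\mathfrak{sl}_2$-actions on each $\K[x_i]/(x_i^{d_i})$ explicitly and take diagonal actions. So you only use the implication ``$\mathfrak{sl}_2$-action with $H|_{A_i}=2i-s$ $\Rightarrow$ $E^m$ is an isomorphism from weight $-m$ to weight $m$'', and Jacobson--Morozov is never needed.

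In your last step, the uniformity across strings comes from the strings being centred at $s/2$, not from unimodality. You can also argue directly. The map $\times L^j\colon A_i\to A_{i+j}$ is injective when $2i+j\le s$, because $\times L^{s-2i}=\times L^{s-2i-j}\circ\times L^j$ is injective. It is surjective when $2i+j\ge s$, because the isomorphism $\times L^{2i+2j-s}\colon A_{s-i-j}\to A_{i+j}$ factors through it (trivially so if $i+j>s$).
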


    In general, exhibiting a Lefschetz element is a difficult task. However, if $A$ is an Artinian $\mathbb{K}$-algebra defined by the quotient by a monomial ideal, then the SLP and the WLP are entirely determined by a specific linear form.
    This fact will be used implicitly throughout.

\begin{lemma}[{\cite[Proposition~2.2]{MMN2011}}]
    Let $I \subset R = \K[x_1, \dots, x_n]$ be an Artinian monomial ideal, where $\K$ is a field of characteristic zero. 
    Then $A = R/I$ has the SLP (resp. WLP) if and only if $L = x_1 + \dots + x_n$ is a (weak) Lefschetz element for $A$. 
\end{lemma}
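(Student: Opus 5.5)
The plan is to use the torus action that a monomial ideal carries, together with the fact that Lefschetz elements form a Zariski-open subset of $A_1$.

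One direction is immediate. Suppose $L=x_1+\dots+x_n$ is a (weak) Lefschetz element. For each pair $i,j$, the condition that $\times \ell^j\colon A_i\to A_{i+j}$ has full rank is a nonvanishing condition on some maximal minor of a matrix whose entries are polynomials in the coefficients of $\ell\in A_1$. So it defines a Zariski-open subset of $A_1$. Since $A$ is Artinian, only finitely many pairs $(i,j)$ matter. The set $U$ of (weak) Lefschetz elements is therefore a finite intersection of open sets. It contains $L$, so it is nonempty, hence dense because $\K$ is infinite. Thus a general linear form is a Lefschetz element, and $A$ has the SLP (resp. WLP).

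For the converse, assume $A$ has the SLP (resp. WLP), so $U$ is a nonempty Zariski-open subset of $A_1$. For $\mathbf a=(a_1,\dots,a_n)\in(\K^*)^n$, the substitution $\varphi_{\mathbf a}\colon x_i\mapsto a_ix_i$ is a graded automorphism of $R$. It sends every monomial to a nonzero scalar multiple of itself, so it maps the monomial ideal $I$ onto itself. It therefore descends to a graded automorphism of $A$ with $\varphi_{\mathbf a}(L)=\sum a_ix_i$. Since $\varphi_{\mathbf a}$ is a graded automorphism, $\varphi_{\mathbf a}\circ(\times L^j)=(\times \varphi_{\mathbf a}(L)^j)\circ\varphi_{\mathbf a}$ on each graded piece. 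Hence $\times L^j\colon A_i\to A_{i+j}$ and $\times(\sum a_ix_i)^j\colon A_i\to A_{i+j}$ have the same rank for all $i,j$. It thus suffices to find a single $\mathbf a\in(\K^*)^n$ with $\sum a_ix_i\in U$.

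The only point needing care is identifying $A_1$ with coordinates. Here $A_1$ has basis the images of those $x_i$ with $x_i\notin I$, say $i\in S$. The variables $x_i\in I$ are zero in $A$, so their coefficients are irrelevant and can be set to $1$. With these coordinates $A_1\cong\K^{S}$, and the forms $\sum_{i\in S}a_ix_i$ with all $a_i\neq0$ make up the complement of the coordinate hyperplanes, a nonempty open set. Two nonempty Zariski-open subsets of the irreducible space $\K^{S}$ intersect, since $\K$ is infinite in characteristic $0$. So some $\sum a_ix_i$ with all $a_i\in\K^*$ lies in $U$, and by the previous paragraph $L$ is a (weak) Lefschetz element.

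I expect no real obstacle. The step to check carefully is the openness of the Lefschetz condition, namely that full rank is detected by finitely many minors polynomial in the coefficients of $\ell$. This is what lets us intersect $U$ with the torus orbit of $L$.
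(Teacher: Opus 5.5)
Your proof is correct and follows essentially the same approach as the standard argument behind the cited \cite[Proposition~2.2]{MMN2011} (the paper gives no proof of its own). That argument combines two facts: rescaling the variables by a torus element fixes the monomial ideal $I$ and carries $L$ to any linear form with nonzero coefficients, and the Lefschetz locus is a nonempty Zariski-open subset of $A_1$ that therefore meets this torus orbit.
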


\section{Incidence toric ideals}

    
    Let $\Delta$ be a pure simplicial complex on $[n]$ and $t< k\leq n$ integers. The \emph{$(t,k)$-incidence complex} $H_\Delta(t,k)$ is the simplicial complex with vertex set the $t$-faces of $\Delta$ and $\sigma$ is a facet if and only if it is the collection of $t$-faces of a $k$-face of $\Delta$. 
    In other words, facets of $H_\Delta(t,k)$ are collections of $t$-faces of $\Delta$ using the same $k+1$ vertices. 
    By construction, $H_\Delta(t,k)$ is pure and \[\dim(H_\Delta(t,k))=\binom{k+1}{t+1}-1.\]
    In order to take into account all the $t$-subsets in every $k$-subsets, we consider the $(t-1,k-1)$-incidence complex $H_{\Delta_{n-1}}(t-1,k-1)$ of the standard $(n-1)$-simplex $\Delta_{n-1}$. We denote by $\A_{n,k,t}$ the transpose of the incidence matrix of $H_{\Delta{n-1}}(t-1, k-1)$. Note that $\A_{n,k,t}$ is a $\binom{n}{t} \times \binom{n}{k}$ matrix whose rows and columns are indexed, respectively, by the $t$-subsets and $k$-subsets of $[n]$. The toric ideal associated to this matrix is the main object of this paper.
    

\begin{example}\label{example:octahedron}
    Let $\Delta$ be the boundary of an octahedron, pictured in \cref{figure:octahedron}.
    {\centering
    \begin{figure}[h]
    \begin{subfigure}[!h]{0.45\linewidth}
        \centering
        \begin{tikzpicture}[scale=1.6]
            \node[draw,circle,fill=black,scale=0.2,label={[label distance=-1pt]210:$1$}] at (xyz spherical cs:radius=1.41,latitude=135,longitude=90) (1) {$1$};
            \node[draw,circle,fill=black,scale=0.2,label={180:$3$}] at (xyz spherical cs:radius=1.41,latitude=-135,longitude=90) (2) {$2$};
            \node[draw,circle,fill=black,scale=0.2,label={[label distance=-1pt]0:$2$}] at (xyz spherical cs:radius=1.41,latitude=-45,longitude=90) (3) {$3$};
            \node[draw,circle,fill=black,scale=0.2,label={[label distance=-5pt]-30:$4$}] at (xyz spherical cs:radius=1.41,latitude=45,longitude=90) (4) {$4$};
            \node[draw,circle,fill=black,scale=0.2,label={[label distance=-1pt]150:$5$}] at (xyz spherical cs:radius=1.82) (5) {$5$};
            \node[draw,circle,fill=black,scale=0.2,label={[label distance=-1pt]210:$6$}] at (xyz spherical cs:radius=1.82,latitude=180) (6) {$6$};

            \begin{scope}[on background layer]
                \path[fill=red,opacity=0.25] (1.center) to (4.center) to (5.center) to (1.center);
                \path[fill=blue,opacity=0.25] (3.center) to (4.center) to (5.center) to (3.center);
                \path[fill=blue,opacity=0.25] (1.center) to (4.center) to (6.center) to (1.center);
                \path[fill=red,opacity=0.25] (4.center) to (3.center) to (6.center) to (4.center);
            \end{scope}
            
            \draw[very thick, dashed] (1) -- (2) -- (3);
            \draw[very thick] (3) -- (4) -- (1);
            \draw[very thick] (1) -- (5);
            \draw[very thick, dashed] (5) -- (2);
            \draw[very thick] (3) -- (5) -- (4);
            \draw[very thick] (1) -- (6);
            \draw[very thick, dashed] (6) -- (2);
            \draw[very thick] (3) -- (6) -- (4);
        \end{tikzpicture}
        \caption{The boundary of an octahedron, the simplicial complex of interest in \cref{example:octahedron}.}
        \label{figure:octahedron}   
    \end{subfigure}
    \hfill
    \begin{subfigure}[!h]{0.45\linewidth}
        \centering
        \begin{tikzpicture}[scale=0.95]
            \fill[blue!25] (-1,1) -- (0,2.23) -- (1,1);
            \fill[red!25] (1,1) -- (2.23,0) -- (1,-1);
            \fill[blue!25] (1,-1) -- (0,-2.23) -- (-1,-1);
            \fill[red!25] (-1,-1) -- (-2.23,0) -- (-1,1);
    
            \fill[red!25] (0,2.23) -- (-3,3) -- (3,3);
            \fill[blue!25] (2.23,0) -- (3,3) -- (3,-3);
            \fill[red!25] (0,-2.23) -- (3,-3) -- (-3,-3);
            \fill[blue!25] (-2.23,0) -- (-3,-3) -- (-3,3);
    
            \node[draw,circle,fill=black,scale=0.2,label={[label distance=-1pt]90:$13$}] at (0,2.23) (12) {$12$};
            \node[draw,circle,fill=black,scale=0.2,label={[label distance=-1pt]0:$23$}] at (2.23,0) (23) {$23$};
            \node[draw,circle,fill=black,scale=0.2,label={[label distance=-1pt]-90:$24$}] at (0,-2.23) (34) {$34$};
            \node[draw,circle,fill=black,scale=0.2,label={[label distance=-1pt]180:$14$}] at (-2.23,0) (14) {$14$};
            \node[draw,circle,fill=black,scale=0.2,label={[label distance=-1pt]135:$15$}] at (-1,1) (15) {$15$};
            \node[draw,circle,fill=black,scale=0.2,label={[label distance=-1pt]45:$35$}] at (1,1) (25) {$25$};
            \node[draw,circle,fill=black,scale=0.2,label={[label distance=-1pt]-45:$25$}] at (1,-1) (35) {$35$};
            \node[draw,circle,fill=black,scale=0.2,label={[label distance=-1pt]-135:$45$}] at (-1,-1) (45) {$45$};
            \node[draw,circle,fill=black,scale=0.2,label={[label distance=-1pt]135:$16$}] at (-3,3) (16) {$16$};
            \node[draw,circle,fill=black,scale=0.2,label={[label distance=-1pt]45:$36$}] at (3,3) (26) {$26$};
            \node[draw,circle,fill=black,scale=0.2,label={[label distance=-1pt]-45:$26$}] at (3,-3) (36) {$36$};
            \node[draw,circle,fill=black,scale=0.2,label={[label distance=-1pt]-135:$46$}] at (-3,-3) (46) {$46$};
    
            \filldraw[very thick] (15) -- (12) -- (25) -- (15);
            \filldraw[very thick] (25) -- (23) -- (35) -- (25);
            \filldraw[very thick] (35) -- (34) -- (45) -- (35);
            \filldraw[very thick] (45) -- (14) -- (15) -- (45);
    
            \filldraw[very thick] (12) -- (16) -- (26) -- (12);
            \filldraw[very thick] (23) -- (26) -- (36) -- (23);
            \filldraw[very thick] (34) -- (36) -- (46) -- (34);
            \filldraw[very thick] (14) -- (46) -- (16) -- (14);
        \end{tikzpicture}
        \caption{The incidence complex $H_\Delta(1,2)$ of the boundary of an octahedron from \cref{example:octahedron}.}
        \label{figure:octahedron incidence complex}
        \end{subfigure}
        \caption{}
    \end{figure}}
    Since $\Delta$ is a pure simplicial complex, $H_\Delta(0,1)$ is its underlying graph (the $1$-skeleton) and $H_\Delta(0,\dim{\Delta})=H_\Delta(0,2)$ is the same as $\Delta$.
    The only nontrivial incidence complex of the octahedron $\Delta$ is $H_\Delta(1,2)$, represented in \cref{figure:octahedron incidence complex}. For ease of display, a vertex $\{ i, j \}$ is labeled as $ij$. The facets are shaded blue and red and contain vertices whose labels are the edges of a facet of $\Delta$.
\end{example}


\begin{definition}\label{def:inctorideal}
    Let $\mathcal{A}_{n,k,t}$ be the incidence matrix of $H_{\Delta_n}(t-1,k-1)$. The \emph{incidence toric ideal} $I_{n,k,t}$ is the kernel of the monomial map with exponent matrix $\mathcal{A}_{n,k,t}$:
    \[ \varphi: \K[c_\kappa: \kappa\in \binom{[n]}{k}] \twoheadrightarrow \K[x_\tau: \tau\in \binom{[n]}{t}], \ \  c_\kappa\mapsto \prod_{\tau \subseteq \kappa} x_\tau. \]
\end{definition}
    \begin{example}
    $(n=4,k=3,t=2)$ The monomial map $\varphi$ with exponent matrix $\mathcal{A}_{4,3,2}$ is
    \begin{equation}
       \begin{aligned}
        \varphi: \K[c_{123}, c_{124}, c_{134}, c_{234}]&\rightarrow \K[x_{12},x_{13},x_{14},x_{23},x_{24},x_{34}] \\ 
        c_{123}\mapsto x_{12}x_{13}x_{23}, \quad 
        c_{124} \mapsto x_{12}x_{14}x_{24}&, \quad
        c_{134}\mapsto x_{13}x_{14}x_{34}, \quad 
        c_{234}\mapsto x_{23}x_{24}x_{34}.  
    \end{aligned} 
    \end{equation} 
    We index the rows and columns of $\mathcal{A}_{4,3,2}$ by subsets of two and three elements of $\{1,2,3,4\}$:
\begin{equation} \label{eq: matrix A_4,3,2}
\small
   \kbordermatrix{
        & 123 & 124 & 134   & 234 \\
    12  &1    & 1   & 0     & 0  \\
    13  &1    & 0   & 1     & 0  \\
    23  &1    & 0   & 0     & 1  \\
    14  &0    & 1   & 1     & 0  \\
    24  &0    & 1   & 0     & 1  \\
    34  &0    & 0   & 1     & 1  \\
    } =\mathcal{A}_{4,3,2}.
    \end{equation}
\end{example}

    We first observe that $I_{n,k,t}$ is homogeneous and it defines an irreducible projective toric variety $\mathcal{V}(I_{n,k,t})\subseteq \mathbb{P}^{\binom{n}{k}-1}$. Indeed, since columns of $\mathcal{A}_{n,k,t}$ have $\binom{k}{t}$ entries equal to $1$ and $0$ elsewhere, the vector $(1,\dots,1)$ is in the rowspan of $\mathcal{A}_{n,k,t}$. If $t=k$, then $\mathcal{A}_{n,k,t}$ is the identity matrix. 

    We show that $\mathcal{A}_{n,k,t}$ has full rank by realizing it as the transpose of the multiplication matrix by the linear form $x_1+\dots +x_n$ in a monomial complete intersection, that always has the SLP. 

\begin{proposition} \label{prop: A_nkt full rank}
    Let $\K$ be a field of characteristic zero and let $L = x_1 + \dots + x_n $ be a linear form in the graded Artinian $\K$-algebra $ A= \frac{\K[x_1, \dots, x_n]}{(x_1^2, \dots, x_n^2)}$. Then, up to a constant, the multiplication~map 
    \[ \times L^{k-t}: A_{t}  \to A_{k} \]
    is represented by the transpose of the incidence matrix $\mathcal{A}_{n,k,t}$. In particular, $\mathcal{A}_{n,k,t}$ has full rank:
    \[ {\rm rank}(\mathcal{A}_{n,k,t}) = \min\left( \binom{n}{t}, \binom{n}{k} \right).\]
\end{proposition}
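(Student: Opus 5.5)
The plan is to compute the matrix of $\times L^{k-t}$ explicitly in the monomial basis and then quote \cref{t:monomialslp} to get full rank.

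\emph{Step 1: bases.} Since $x_i^2=0$ in $A=\K[x_1,\dots,x_n]/(x_1^2,\dots,x_n^2)$, the graded piece $A_j$ has as a $\K$-basis the squarefree monomials $x_\sigma=\prod_{i\in\sigma}x_i$ with $\sigma\in\binom{[n]}{j}$. Hence $\dim A_t=\binom{n}{t}$ and $\dim A_k=\binom{n}{k}$, and these bases are indexed exactly like the rows and columns of $\mathcal{A}_{n,k,t}$.

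\emph{Step 2: the matrix of $\times L^{k-t}$.} Set $m=k-t$. By the multinomial expansion in the commutative ring $A$, every monomial containing a square vanishes, so
\[
L^{m}=m!\sum_{\rho\in\binom{[n]}{m}}x_\rho .
\]
For $\tau\in\binom{[n]}{t}$ this gives
\[
L^{m}x_\tau=m!\sum_{\rho\cap\tau=\emptyset}x_{\rho\cup\tau}=m!\sum_{\kappa\supseteq\tau,\ |\kappa|=k}x_\kappa .
\]
The second equality holds because each $\kappa\supseteq\tau$ with $|\kappa|=k$ arises from exactly one $\rho$, namely $\rho=\kappa\setminus\tau$.

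So the coefficient of $x_\kappa$ in $L^m x_\tau$ is $m!$ if $\tau\subseteq\kappa$ and $0$ otherwise. The matrix of $\times L^{k-t}\colon A_t\to A_k$ is therefore $m!$ times the $\binom{n}{k}\times\binom{n}{t}$ inclusion matrix, which is the transpose of $\mathcal{A}_{n,k,t}$. Since $\operatorname{char}\K=0$, the constant $m!$ is nonzero and does not change the rank.

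\emph{Step 3: full rank.} By \cref{t:monomialslp} with all $d_i=2$, $A$ has the SLP. By the lemma from \cite{MMN2011}, $L=x_1+\dots+x_n$ is a strong Lefschetz element. Hence $\times L^{k-t}$ has full rank $\min(\dim A_t,\dim A_k)$. Since transposing and scaling by a nonzero constant preserve rank,
\[
\operatorname{rank}(\mathcal{A}_{n,k,t})=\min\left(\binom{n}{t},\binom{n}{k}\right).
\]
The degenerate case $k=t$ is consistent: $L^0$ is the identity and $\mathcal{A}_{n,k,k}$ is the identity matrix.

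There is no real obstacle. The only points needing care are the coefficient $m!$ (and hence the characteristic-zero hypothesis) and matching the orientation, rows versus columns, of the matrix with the paper's convention for $\mathcal{A}_{n,k,t}$.
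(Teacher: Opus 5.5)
Your proof is correct and takes the same route as the paper. You identify the matrix of $\times L^{k-t}$ on the squarefree monomial bases with $(k-t)!\,\mathcal{A}_{n,k,t}^{T}$ and then apply Stanley's strong Lefschetz theorem for $\K[x_1,\dots,x_n]/(x_1^2,\dots,x_n^2)$. The only differences are that you verify this identification by a direct multinomial computation and cite \cite{MMN2011} explicitly, whereas the paper quotes \cite[Proposition 3.7]{H2025} and uses implicitly that $x_1+\dots+x_n$ is a Lefschetz element.
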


\begin{proof}
    Let $\Delta$ be a $(n-1)$-dimensional simplex. By~\cite[Proposition 3.7]{H2025} the incidence matrix of $H_\Delta(t-1,k-1)$ is (up to a constant) the matrix given by $\times L^{k - t}: A_t \to A_{k}$. By \Cref{t:monomialslp}, $A$ has the strong Lefschetz property, that is, $\mathcal{A}_{n,k,t}$ is full rank for every $t,k$.
\end{proof}

    From \cref{prop: A_nkt full rank}, it follows that the toric ideal $I_{n,k,t}$ is nonzero if and only if $\binom{n}{t}<\binom{n}{k}$; otherwise, the kernel of $\mathcal{A}_{n,k,t}$ is trivial. We therefore assume this inequality holds throughout the remainder of the paper. The first interesting case is shown in the following example.

\begin{example}\label{ex:octahedralquartics}($t=2,k=3,n=6)$
    The incidence toric ideal $I_{6,3,2}$ has $30$ binomial generators. Following the labeling of the octahedron in \Cref{figure:octahedron}, they are $15$ quartics of the form
    \begin{equation} \label{eq: octahedral quartics}
        c_{136}c_{246}c_{145}c_{235} \ -  \ c_{146}c_{236}c_{245}c_{135}
    \end{equation}
    and $15$ sextics of the form
    \[ c_{146}c_{156}c_{236}c_{123}c_{345}c_{245} \ - \ c_{136}c_{126}c_{456}c_{145}c_{234}c_{235}.  \]
    We refer to (\ref{eq: octahedral quartics}) as the \emph{octahedral quartics}, since they correspond to the $15$ relabelings of vertices of an octahedron. The octahedral quartics (\ref{eq: octahedral quartics}) corresponds to a $2$-coloring of the facets. The ideal $I_{6,3,2}$ defines a toric variety of codimension $5$ and degree $162$ in $\mathbb{P}^{19}$.
\end{example}

\begin{corollary}
    Let $n > k > t$ be positive integers such that $\binom{n}{t}<\binom{n}{k}$. Then
    \[\htt (I_{n,k,t}) = \binom{n}{k} - \binom{n}{t}, \quad {\dim}(\V(I_{n,k,t})) = \binom{n}{t}-1.\]
    
\end{corollary}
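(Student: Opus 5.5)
The corollary follows directly from \cref{prop: A_nkt full rank} together with the standard facts on toric ideals recalled in the preliminaries. The plan is to compute the Krull dimension of $\K[c_\kappa]/I_{n,k,t}$ as the rank of $\mathcal{A}_{n,k,t}$. Height is then ambient dimension minus Krull dimension, and the projective dimension is one less than the Krull dimension, since the ideal is homogeneous.

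First, recall from \cite[Chapter~4]{sturmfelsgroebner} that $\dim \K[c_\kappa : \kappa\in\binom{[n]}{k}]/I_{A} = {\rm rank}(A)$ for any integer matrix $A$. This holds because the image of $\varphi$ is the semigroup algebra of the affine semigroup generated by the columns, whose dimension is the rank of the lattice they span. We apply this with $A = \mathcal{A}_{n,k,t}$. By \cref{prop: A_nkt full rank}, which uses characteristic zero, together with the standing assumption $\binom{n}{t}<\binom{n}{k}$, we get ${\rm rank}(\mathcal{A}_{n,k,t}) = \binom{n}{t}$.

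The rank of an integer matrix does not depend on the field of characteristic zero, so this rank is the rational rank, which is the relevant invariant. If one worries about a field $\K$ of positive characteristic, note that the Krull dimension of a toric ring is the rank of the lattice $\Z\mathcal{A}$, computed over $\mathbb{Q}$. Hence characteristic-zero SLP suffices regardless of $\K$. This is the only point needing care, and I would remark on it explicitly.

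Since $I_{n,k,t}$ is prime, the polynomial ring is a catenary domain, so
\[
\htt(I_{n,k,t}) = \binom{n}{k} - \dim \K[c_\kappa]/I_{n,k,t} = \binom{n}{k}-\binom{n}{t}.
\]
Finally, we observed earlier that $(1,\dots,1)$ lies in the row span of $\mathcal{A}_{n,k,t}$, because every column has exactly $\binom{k}{t}$ ones. Thus $I_{n,k,t}$ is homogeneous, and $\V(I_{n,k,t})\subseteq\mathbb{P}^{\binom{n}{k}-1}$ has dimension one less than the affine cone, namely $\binom{n}{t}-1$.
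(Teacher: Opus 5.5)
Your proposal is correct and follows the route the paper leaves implicit: the Krull dimension of the toric ring equals ${\rm rank}(\mathcal{A}_{n,k,t})$, which is $\binom{n}{t}$ by \cref{prop: A_nkt full rank}. The height is then $\binom{n}{k}-\binom{n}{t}$, and homogeneity gives the projective dimension $\binom{n}{t}-1$. One small correction: the identity $\htt P + \dim R/P = \dim R$ holds because $R$ is an affine domain over a field, which is slightly more than catenarity alone, but this is standard. Your remark that the rank is the rational rank of $\Z\mathcal{A}_{n,k,t}$, and hence independent of $\K$, is a good addition.
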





\subsection{Combinatorial and geometric structure of the polytope} 
    In this section we study the combinatorial properties of the rational polytope $\mathcal{P}_{n,k,t}$ associated with the ideal $I_{n,k,t}$. It is obtained as the convex hull of the columns of $\mathcal{A}_{n,k,t}$. 
    We show that its volume grows with no polynomial bound for suitable choices of $n,k,t$ and there is no polynomial formula in terms of $n,k,t$ for it. We will use the following statement from the theory of Lefschetz properties.

    \begin{theorem}[{\cite[Theorem 2.9]{K2026}}]\label{t:monomialslpcharp}
    Let $A = \frac{\K[x_1,\dots,x_n]}{(x_1^2,\dots, x_n^2)}$, where $\K$ is a field of characteristic $p$ and $L = x_1 + \dots + x_n$. Then for integers $0 \leq t < k \leq n$ the map 
    \[   \times L^{k-t}: A_t \to A_k \]
    has full rank if and only if $p > \min(k, n-t)$.
\end{theorem}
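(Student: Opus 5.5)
The plan is to compute the multiplication matrix explicitly, reduce to a known rank formula for inclusion matrices over $\mathbb{F}_p$, and then check divisibility by elementary number theory.

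\emph{Step 1: the matrix.} In $A=\K[x_1,\dots,x_n]/(x_1^2,\dots,x_n^2)$ every square vanishes. Expanding $L^{k-t}$ by the multinomial theorem therefore leaves only squarefree terms, each with coefficient $(k-t)!$. Hence for a $t$-subset $\tau$,
\[ L^{k-t}x_\tau=(k-t)!\sum_{\kappa\supseteq\tau,\ |\kappa|=k}x_\kappa . \]
So the map is $(k-t)!$ times the $0/1$ inclusion matrix $W_{t,k}$ of $t$-subsets versus $k$-subsets, which is the matrix of \cref{prop: A_nkt full rank}. Its rank over $\K$ depends only on $p$, so we may take $\K=\mathbb{F}_p$.

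\emph{Step 2: reduction to injectivity.} Let $s=k-t$. We use the perfect pairing $A_i\times A_{n-i}\to A_n\cong\K$. Under this pairing the transpose of $\times L^{s}\colon A_t\to A_k$ is $\times L^{s}\colon A_{n-k}\to A_{n-t}$. The substitution $(t,k)\mapsto(n-k,n-t)$ preserves $s$ and the quantity $\min(k,n-t)$. We may therefore assume $\binom{n}{t}\le\binom{n}{k}$, i.e.\ $t\le n-k$, so that $\min(k,n-t)=k$. Full rank now means injectivity, and we must show that $\times L^{s}\colon A_t\to A_k$ is injective iff $p>k$.

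\emph{Step 3: Wilson's rank formula.} Wilson's theorem on inclusion matrices states that for $t\le \min(k,n-k)$,
\[ \operatorname{rank}_{\mathbb{F}_p}W_{t,k}=\sum_{\substack{0\le i\le t\\ p\nmid\binom{k-i}{t-i}}}\left(\binom{n}{i}-\binom{n}{i-1}\right). \]
Each summand is positive, and the full sum over $i=0,\dots,t$ telescopes to $\binom nt$. Hence $W_{t,k}$ is injective iff $p\nmid\binom{k-i}{t-i}$ for every $0\le i\le t$. Including the scalar $(k-t)!$, the map $\times L^{s}$ is injective iff $p$ divides none of
\[ s!\binom{m}{s}=m(m-1)\cdots(m-s+1),\qquad m=k-i\in\{s,s+1,\dots,k\}. \]
Each of these is a product of $s$ consecutive integers, running over the intervals $[m-s+1,m]$. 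As $m$ ranges from $s$ to $k$, these intervals cover exactly $\{1,\dots,k\}$: the case $m=s$ gives $1,\dots,s$, and consecutive intervals overlap. So some product is divisible by $p$ iff $p\le k$. This proves the claim. The case $t=0$ is consistent: the map sends $1\mapsto k!\sum_{|\kappa|=k} x_\kappa$, which is nonzero iff $p>k$.

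\emph{Main obstacle.} The delicate step is the appeal to Wilson's theorem. We must make sure its hypotheses match our normalization $t\le n-k$ after the duality step. We must also make sure the duality is legitimate in characteristic $p$. It is, because the pairing on the exterior-face-ring-like algebra $A$ is the perfect $0/1$ pairing of complementary monomials, independent of $p$.
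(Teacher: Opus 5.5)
Your proof is correct. The paper does not prove this statement: it imports it from \cite{K2026} and uses it only as a black box in the proof of \cref{prop: volume divided by p}, so the comparison is between that citation and your derivation.

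Your derivation is self-contained apart from Wilson's diagonal-form theorem, and each step holds up:
\begin{itemize}
\item identifying $\times L^{k-t}$ with $(k-t)!$ times the inclusion matrix;
\item reducing to the case $t+k\le n$ via the complementation pairing $A_i\times A_{n-i}\to A_n$, whose Gram matrix is a permutation matrix and hence perfect in every characteristic; in this case $\min(k,n-t)=k$;
\item applying Wilson's $p$-rank formula, whose hypothesis $t\le\min(k,n-k)$ is exactly your normalization;
\item observing that the falling factorials $m(m-1)\cdots(m-s+1)$, for $s\le m\le k$, together use exactly the integers $1,\dots,k$ as factors.
\end{itemize}

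One claim is left unjustified: that every summand $\binom{n}{i}-\binom{n}{i-1}$, for $0\le i\le t$, is positive. The ``iff'' depends on it, and it holds because $2i\le 2t<t+k\le n$.

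Compared with the citation, your route gives more. It yields the exact $\mathbb{F}_p$-rank of $\mathcal{A}_{n,k,t}$ for every prime, not just a full-rank criterion. Through Wilson's diagonal form it also yields the gcd of the maximal minors when $t+k\le n$:
\[
\prod_{i=0}^{t}\binom{k-i}{t-i}^{\binom{n}{i}-\binom{n}{i-1}}.
\]
This is exactly the quantity the argument of \cref{prop: volume divided by p} controls one prime at a time. The cost is dependence on Wilson's theorem, which is itself a nontrivial result on inclusion matrices.
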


\begin{proposition} \label{prop: volume divided by p}
    The normalized Euclidean volume (with respect to the Euclidean lattice) of $\PP_{n,k,k-1}$ is divisible by every prime $p$ such that 
    $$
        p \leq \min(k, n-k+1).
    $$ 
\end{proposition}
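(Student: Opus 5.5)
The plan is to turn the volume into a lattice index, and then use \Cref{t:monomialslpcharp} to find primes dividing that index. Write $N=\binom{n}{k-1}$ and let $a_\kappa\in\Z^{N}$, for $\kappa\in\binom{[n]}{k}$, be the columns of $\mathcal{A}_{n,k,k-1}$. Every column has coordinate sum $k$, so $\PP_{n,k,k-1}$ lies in the affine hyperplane $\{\sum_\tau y_\tau = k\}$. By \cref{prop: A_nkt full rank} and the standing assumption $\binom{n}{k-1}<\binom{n}{k}$, the polytope has full dimension $N-1$ in that hyperplane.

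\textbf{Step 1: volume as a lattice index.} Let $\Lambda_0=\Z^N\cap\{\sum_\tau y_\tau=0\}$ and let $L'\subseteq\Lambda_0$ be the lattice generated by the differences $a_\kappa-a_{\kappa'}$. The normalized volume of $\PP_{n,k,k-1}$ with respect to $\Lambda_0$ equals $[\Lambda_0:L']$ times its normalized volume with respect to $L'$. The latter is a positive integer, since $\PP_{n,k,k-1}$ is a lattice polytope for $L'$ and can be triangulated into lattice simplices. So it suffices to show $p\mid[\Lambda_0:L']$.

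\textbf{Step 2: reduce to a functional mod $p$.} The index $[\Lambda_0:L']$ is divisible by $p$ if and only if there is a linear functional $y\in\mathbb{F}_p^N$ with two properties:
\begin{itemize}
\item $y$ is not a multiple of $(1,\dots,1)$;
\item $y(a_\kappa)$ takes the same value for every $\kappa$.
\end{itemize}
Such a $y$ vanishes on $L'\otimes\mathbb{F}_p$ but not on $\Lambda_0\otimes\mathbb{F}_p$.

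\textbf{Step 3: produce such a $y$.} \Cref{t:monomialslpcharp} with $t=k-1$ says that $\times L\colon A_{k-1}\to A_k$ fails to have full rank over $\mathbb{F}_p$ exactly when $p\le\min(k,n-k+1)$. Over $\mathbb{F}_p$ this map is represented by $\mathcal{A}_{n,k,k-1}$ itself, with no scalar: for $t=k-1$ the constant in \cref{prop: A_nkt full rank} is $1$. Since $N<\binom{n}{k}$, full rank means injectivity on $A_{k-1}$, equivalently surjectivity of the transpose. Failure of full rank therefore gives a nonzero $y\in\mathbb{F}_p^N$ with $y\,\mathcal{A}_{n,k,k-1}\equiv0$. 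Such a $y$ satisfies the constancy condition of Step 2 with value $0$.

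\textbf{Main obstacle.} When $p\nmid k$, the all-ones functional satisfies $\mathbf 1\cdot a_\kappa=k\not\equiv0$, so $y$ cannot be a multiple of $(1,\dots,1)$ and Step 2 applies. When $p\mid k$, however, $\mathbf 1$ itself lies in the left kernel, and the $y$ from the theorem might be just $\mathbf 1$. This is exactly the discrepancy $[\Z^N:\Z\mathcal{A}]=k\,[\Lambda_0:L']$ coming from the coordinate-sum map.

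To handle it, I would first try to show that the left kernel mod $p$ has dimension at least $2$. One route is to iterate the Lefschetz failure along the chain $A_{k-2}\to A_{k-1}\to A_k$ and compare ranks, using \Cref{t:monomialslpcharp} for the pairs $(k-2,k)$ and $(k-2,k-1)$. Another is to apply the statement for $n-1$ to the links of a vertex: split $(k-1)$-sets by whether they contain $1$, which writes $\mathcal{A}_{n,k,k-1}$ in block form with blocks $\mathcal{A}_{n-1,k,k-1}$, $\mathcal{A}_{n-1,k-1,k-2}$ and an identity. The goal of either route is a kernel vector of $y\,\mathcal{A}_{n,k,k-1}$ that is not constant.

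If both routes fail, the fallback is a direct search for a non-constant $y$ for which $y(a_\kappa)$ is constant but nonzero mod $p$. The candidates are functions of $|\tau\cap S|$ for a fixed subset $S\subseteq[n]$, with $|S|$ chosen relative to $p$.
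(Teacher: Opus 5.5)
Your Steps 1--3 are sound, and for primes $p\le\min(k,n-k+1)$ with $p\nmid k$ they give a complete proof. In that case you even get the stronger statement $p\mid[\Lambda_0:L']$. The gap is the case $p\mid k$, and it is not a technicality: with the volume normalized by $\Lambda_0$, as you set it up, the claim is false there.

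Take $k=2=p$. Then $\A_{n,2,1}$ is the vertex--edge incidence matrix of $K_n$. If $y_i+y_j$ is independent of $\{i,j\}$ in $\mathbb{F}_2$, then comparing $y_i+y_j$ with $y_i+y_l$ shows $y$ is constant once $n\ge 3$. So the left kernel mod $2$ is spanned by $\mathbf{1}$, no functional as in Step 2 exists, and $[\Lambda_0:L']=1$; indeed $L'$ contains every $e_i-e_j$. Moreover $\PP_{5,2,1}=\{y\in[0,1]^5:\sum_i y_i=2\}$. Projecting away the last coordinate shows that its $\Lambda_0$-normalized volume is $4!\cdot\mathrm{vol}\{x\in[0,1]^4:1\le\sum_i x_i\le 2\}=11$, which is odd even though $2\le\min(2,4)$.

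Hence neither the left-kernel route nor the search over functions of $|\tau\cap S|$ can succeed. Even where the $\Lambda_0$-volume happens to be divisible by $p$ (the octahedron $\PP_{4,2,1}$ has volume $4$), the index $[\Lambda_0:L']$ need not be. So the reduction in Step 1 is too lossy for this case.

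The paper's proof never passes to $\Lambda_0$. It triangulates $\PP_{n,k,k-1}$ without new vertices and measures each simplex by the determinant of the $N\times N$ matrix formed by its $N$ vertices, which is a maximal minor of $\A_{n,k,k-1}$. By \Cref{t:monomialslpcharp} the rank of $\A_{n,k,k-1}$ over $\mathbb{F}_p$ is smaller than $N$, so every maximal minor, and hence every summand, is divisible by $p$.

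The quantity $\sum_\Delta|\det|$ is the normalized volume of the pyramid $\mathrm{conv}(0,\PP_{n,k,k-1})$ in $\Z^N$. That is, it is $k$ times your $\Lambda_0$-volume, which is exactly the factor $k$ in your identity $[\Z^N:\Z\A]=k\,[\Lambda_0:L']$. With this factor the case $p\mid k$ is automatic (as you noticed, $\mathbf{1}$ already lies in the left kernel mod $p$), and no case split is needed. For $p\nmid k$, divisibility by $p$ is the same in both normalizations, which is why your argument succeeds there.

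The paper's write-up is terse about the normalization, but identifying simplex volumes with maximal minors of $\A_{n,k,k-1}$ only makes sense for this one. The missing idea in your proposal is to measure volume this way rather than relative to $\Lambda_0$.
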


\begin{proof}
    The volume of $\PP_{n,k,k-1}$ is the sum of volumes of the simplices in any of its triangulation, hence it suffices to find a triangulation such that the volume of every simplex is divisible by primes $p \leq \min(k, n - k+1)$. 
    The volume of a simplex $\Delta$ on $\ell+1$ vertices is the absolute value of the determinant of the matrix whose columns are the directions of its edges containing a fixed vertex $\bv_0$. This is the determinant of the $\ell\times \ell$ matrix $M_\Delta$, whose columns are the coordinates of the vertices of $\Delta$, shifted by the first column. Since the gcd of all $\ell$-minors is preserved by row and column operations, the volume of $\Delta$ is equal to the gcd of the $\ell$-minors of the matrix whose columns are the coordinates of vertices of $\Delta$. 
    In particular, if the vertices of $\Delta$ are also vertices of $\PP_{n,k,k-1}$, these minors are nonzero maximal minors of the matrix $\mathcal{A}_{n,k,k-1}$.
    
    Next, note that every polytope can be triangulated without introducing new vertices by simply coning over some vertex. Consequently, if a prime $p$ divides the greatest common divisor of the nonzero maximal minors of $\mathcal{A}_{n,k,k-1}$, it also divides the volume of every simplex in any triangulation mentioned above. To conclude, we consider $[\mathcal{A}_{n,k,k-1}]_p$ to be the matrix $\mathcal{A}_{n,k,k-1}$ modulo $p$. Since $p$ divides the gcd of all $i$-minors of $\mathcal{A}_{n,k,k-1}$ if and only if the rank of $[\mathcal{A}_{n,k,k-1}]_p$ is smaller than $i$, the result follows by \cref{t:monomialslpcharp}.
\end{proof}

\begin{corollary}
    The volume of $\mathcal{P}_{n,k,t}$ is not a polynomial function in $n,k,t$.
\end{corollary}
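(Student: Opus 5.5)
The plan is to argue by contradiction, using the divisibility result \cref{prop: volume divided by p}. It suffices to restrict to the family $t=k-1$: if the volume of $\mathcal{P}_{n,k,t}$ were given by a polynomial in $n,k,t$, then substituting $t=k-1$ would give a polynomial $V(n,k)\in\mathbb{Q}[n,k]$. This polynomial would compute the normalized volume of $\PP_{n,k,k-1}$ whenever $\binom{n}{k-1}<\binom{n}{k}$, i.e.\ whenever $2k<n+1$, so that the toric ideal is nonzero.

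Next I specialize further to a one-parameter family in which $\min(k,n-k+1)$ grows without bound and the binomial inequality holds, for instance $n=2k$. Then $\min(k,n-k+1)=k$, and by \cref{prop: volume divided by p} the integer $f(k):=V(2k,k)$ is divisible by every prime $p\le k$. In particular $f(k)$ is divisible by the primorial $\prod_{p\le k}p$, which grows like $e^{(1+o(1))k}$ by the prime number theorem. A weaker Chebyshev bound, or simply the fact that the number of primes is infinite, already gives superpolynomial growth.

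The key step is then a dichotomy for the univariate polynomial $f\in\mathbb{Q}[k]$. Either $f(k)\neq 0$ for all large $k$, in which case $|f(k)|\ge\prod_{p\le k}p$, and this eventually exceeds any polynomial bound, a contradiction. Or $f$ vanishes at infinitely many integers, hence $f\equiv 0$, contradicting that the normalized volume of a full-dimensional lattice polytope in its affine lattice is a positive integer. For $\PP_{n,k,k-1}$, full dimensionality follows from \cref{prop: A_nkt full rank}, since the rank is $\binom{n}{k-1}\ge 1$.

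The main point needing care is the positivity and integrality of the volume, and making sure the divisibility statement in \cref{prop: volume divided by p} applies to the normalized volume exactly as it would be encoded by the hypothetical polynomial. I will treat "volume" consistently as the normalized volume used in that proposition, which is a positive integer, so that nonvanishing is automatic and the primorial lower bound applies directly. This rules out any polynomial formula in $n,k,t$. The same growth argument also shows that the volume is not bounded by any polynomial along this family.
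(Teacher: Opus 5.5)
Your proof is correct and is essentially the paper's argument. Both restrict to $t=k-1$ along a line where $\min(k,n-k+1)=k$, use \cref{prop: volume divided by p} to get divisibility by $\prod_{p\le k}p$, and play this superpolynomial growth against polynomial growth. Your choice $n=2k$, in place of the paper's $n=2k-1$ (where $\binom{2k-1}{k-1}=\binom{2k-1}{k}$ falls outside the standing assumption $\binom{n}{t}<\binom{n}{k}$), and your explicit nonvanishing step are, if anything, small improvements.

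There are two caveats. First, the mere infinitude of primes does not by itself give superpolynomial growth of $\prod_{p\le k}p$; you need the Chebyshev or prime-number-theorem bound, which you also cite. Second, your worry about normalization is well founded. What the proof of \cref{prop: volume divided by p} actually shows is that $p$ divides every maximal minor of $\mathcal{A}_{n,k,k-1}$, and hence divides the normalized volume of the pyramid over $\mathcal{P}_{n,k,k-1}$ with apex at the origin, which is $k$ times the relative normalized volume. For the relative volume itself the claim can fail: the hypersimplex $\mathcal{P}_{5,2,1}$ has normalized volume $11$ although $p=2\le\min(k,n-k+1)=2$. Since multiplying by $k$ preserves polynomiality, your argument still goes through with $f(k)$ replaced by $kf(k)$.
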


\begin{proof}
     If $t = k -1$ and $ n= 2k-1$, by \cref{prop: volume divided by p} the volume of $\mathcal{P}_{2k-1,k,k-1}$ is divisible by the product of every prime number smaller than $k$ and such product grows like $e^{\frac{k}{\log(k)} \log(\frac{k}{\log(k)})}$ \cite[Section~22.18]{hardy2008introduction}. Hence, the volume $V(\mathcal{P}_{2k-1,k,k-1})$ grows faster than any polynomial function in $k$. It follows that $V(\mathcal{P}_{n,k,t})$ cannot be expressed as a polynomial function in $n,k,t$.
\end{proof}

We note that the volume being computed in the statements above is not equal to the degree of the variety $\V(I_{n,k,t})$, since we are normalizing the volume with respect to the Euclidean lattice, instead of the lattice induced by $\mathcal{A}_{n,k,t}$. This leaves the following question open.
\begin{question} \label{question: deg I = deg J}
    How to compute efficiently the degree of $I_{n,k,t}$? 
\end{question}



    We employ null $t$-design functions to identify lower dimensional faces of $\mathcal{P}_{n,k,t}$ and to obtain a lower bound for the degrees of generators of $I_{n,k,t}$. We denote by $2^{[n]}$ the powerset of $[n]$.

\begin{definition}
    A function $f:2^{[n]}\rightarrow\mathbb{R}$ is a \emph{null $t$-design} if, for every $X\in 2^{[n]}$ with $\lvert X\rvert\leq t$, the following balance condition holds:
\begin{equation} \label{eq: t-design condition}
	\sum_{X\subseteq F\subseteq [n]} f(F)=0\,.
\end{equation}
    Moreover, we define the support and the positive support of $f$, respectively, as the sets
\begin{equation*}
    \mathrm{supp}(f)=\left\{F\in 2^{[n]}:f(F)\neq0\right\}\text{ and }\mathrm{supp}_+(f)=\left\{F\in 2^{[n]}:f(F)>0\right\}\,.
\end{equation*}
    Null $t$-designs are also known as functions of \emph{strength $t$}
    or \emph{$0$-configurations}, see~\cites{deza1982vector,deza1983functions}. A null $t$-design is \emph{$k$-uniform} if $f(F)\neq0$ implies $\lvert F\rvert=k$, that is, $\mathrm{supp}(f)\subseteq \binom{[n]}{k}$. By definition, if $f$ is a $k$-uniform null $t$-design, then it is a $k$-uniform null $s$-design for every $s<t$.
\end{definition}

\begin{proposition}\label{prop: k-uniformcond}
    Let $f:2^{[n]}\rightarrow\mathbb{R}$ be such that $f(F)\neq0$ only if $\lvert F\rvert=k$. Then, $f$ is a $k$- uniform null $t$-design if and only if the balance condition (\ref{eq: t-design condition}) is satisfied for $t$-subsets, that is,
    \begin{equation*}
	\sum_{X\subseteq F\subseteq [n]} f(F)=0\,,
    \end{equation*}
    for every $X\in 2^{[n]}$ with $\lvert X\rvert=t$.
\end{proposition}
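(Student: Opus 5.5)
The forward implication is immediate from the definition: a $k$-uniform null $t$-design satisfies the balance condition (\ref{eq: t-design condition}) for every $X$ with $\lvert X\rvert\leq t$, in particular for $\lvert X\rvert=t$. The content is the converse. Assume $f$ is supported on $\binom{[n]}{k}$ and satisfies the balance condition for every $t$-subset. We must show it for every $X$ with $\lvert X\rvert=s<t$. We may assume $t\leq k$, since otherwise all sums over supersets of sets of size $\le t$ are trivially zero or the hypothesis is vacuous in a consistent way; if $s>k$ the sum is empty.

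The plan is a double counting argument. Fix $X$ with $\lvert X\rvert=s<t$ and consider
\[
S=\sum_{\substack{Y\supseteq X\\ \lvert Y\rvert=t}}\ \sum_{Y\subseteq F\subseteq[n]} f(F).
\]
Each inner sum vanishes by hypothesis, so $S=0$. Now exchange the order of summation. Since only $k$-sets $F$ contribute, we get
\[
S=\sum_{\substack{F\supseteq X\\ \lvert F\rvert=k}} f(F)\cdot\#\{Y: X\subseteq Y\subseteq F,\ \lvert Y\rvert=t\}=\binom{k-s}{t-s}\sum_{X\subseteq F\subseteq[n]}f(F).
\]
Here the count $\binom{k-s}{t-s}$ is independent of $F$, which is exactly where $k$-uniformity is used. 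Because $s<t\leq k$, this binomial coefficient is a positive integer. Dividing by it over $\mathbb{R}$ yields the balance condition for $X$.

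No step is a real obstacle. The only point needing care is that the multiplicity is the same for every $F$ in the support, which holds because all such $F$ have size $k$; for non-uniform $f$ the argument fails. I would also note the equivalent matrix form: the condition says $f$ restricted to $\binom{[n]}{k}$ lies in $\ker\mathcal{A}_{n,k,t}$. The identity above is then the factorization $\mathcal{A}_{n,t,s}\mathcal{A}_{n,k,t}=\binom{k-s}{t-s}\mathcal{A}_{n,k,s}$, which gives the containment $\ker\mathcal{A}_{n,k,t}\subseteq\ker\mathcal{A}_{n,k,s}$.
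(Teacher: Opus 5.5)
Your proposal is correct and is essentially the paper's own argument. You sum the balance conditions over the intermediate $t$-subsets $Y\supseteq X$ and use $k$-uniformity to pull out the constant factor $\binom{k-s}{t-s}$. One small correction: your aside that $t>k$ is harmless is wrong, since there the hypothesis is vacuous while the conclusion can fail (take $f$ the indicator of a single $k$-set and $X=\emptyset$). This case is excluded by the paper's standing assumption $t<k$, so it does not affect the argument.
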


\begin{proof}
    Let $f:2^{[n]}\rightarrow\mathbb{R}$ be a function with support contained in $\binom{[n]}{k}$ satisfying \cref{eq: t-design condition} for every $X\in 2^{[n]}$ with $\lvert X\rvert=t$. Let $X$ be a subset of $[n]$ with $|X|=s<t$. 
    Refining the balance condition (\ref{eq: t-design condition}) by grouping sets $F$ according to intermediate $t$-subsets yields
    \begin{equation*}
    \binom{k-s}{t-s}\sum_{X\subseteq F\subseteq [n]} f(F)=\sum_{\substack{X\subseteq Y\subseteq[n] \\ \lvert Y\rvert=t}}\sum_{Y\subseteq  F\subseteq[n]}f(F)=0\,,
\end{equation*}
    and we conclude that $f$ is a $k$-uniform null $t$-design. The converse of the statement is trivial.
\end{proof}
We now describe the relationship between $k$-uniform null $t$-designs and binomials in $I_{n,k,t}$. 
\begin{proposition}\label{prop:isomorphism}
    There is an isomorphism of $\Z$-modules between $\ker_\Z(\mathcal{A}_{n,k,t})$ and $k$-uniform null $t$-designs with values in $\Z$.
\end{proposition}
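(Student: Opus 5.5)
We identify each integer vector $u\in\Z^{\binom{[n]}{k}}$, whose coordinates are indexed by the columns of $\mathcal{A}_{n,k,t}$, with the function $f_u\colon 2^{[n]}\to\Z$ defined by $f_u(F)=u_F$ if $|F|=k$ and $f_u(F)=0$ otherwise. Conversely, a function $f$ with $\supp(f)\subseteq\binom{[n]}{k}$ gives the vector $u_f=(f(\kappa))_{\kappa\in\binom{[n]}{k}}$. These two assignments are mutually inverse and additive. So they give an isomorphism of $\Z$-modules between $\Z^{\binom{[n]}{k}}$ and the $\Z$-valued functions on $2^{[n]}$ supported on $k$-subsets. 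It remains to check that this isomorphism carries $\ker_\Z(\mathcal{A}_{n,k,t})$ exactly onto the $k$-uniform null $t$-designs.

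For this, we compute the product row by row. The row of $\mathcal{A}_{n,k,t}$ indexed by a $t$-subset $\tau$ has entry $1$ in column $\kappa$ exactly when $\tau\subseteq\kappa$. Hence
\[
(\mathcal{A}_{n,k,t}u)_\tau=\sum_{\substack{\kappa\in\binom{[n]}{k}\\ \tau\subseteq\kappa}}u_\kappa=\sum_{\tau\subseteq F\subseteq[n]}f_u(F),
\]
where the second equality holds because $f_u$ vanishes off $\binom{[n]}{k}$. Therefore $u\in\ker_\Z(\mathcal{A}_{n,k,t})$ if and only if the balance condition \eqref{eq: t-design condition} holds for every $X$ with $|X|=t$.

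Since $f_u$ is supported on $k$-subsets, \cref{prop: k-uniformcond} shows that this is equivalent to $f_u$ being a $k$-uniform null $t$-design. That proposition is stated over $\R$, and its proof divides by $\binom{k-s}{t-s}$, but this is harmless for $\Z$-valued $f$. Indeed, if $\binom{k-s}{t-s}$ times an integer is zero, then the integer is zero, because $\binom{k-s}{t-s}$ is nonzero for $s\le t\le k$.

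Combining the three steps, the restriction of the bijection $u\mapsto f_u$ to $\ker_\Z(\mathcal{A}_{n,k,t})$ is the desired isomorphism. None of the steps is difficult. The only point needing care is the passage from balance on $t$-subsets alone to balance on all subsets of size at most $t$, and this is exactly what \cref{prop: k-uniformcond} provides.
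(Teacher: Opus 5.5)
Your proof is correct and follows the paper's argument: you identify integer vectors indexed by $k$-subsets with functions supported on $\binom{[n]}{k}$, note that the row of $\mathcal{A}_{n,k,t}$ indexed by a $t$-subset computes exactly the balance sum, and invoke Proposition~\ref{prop: k-uniformcond} to extend balance to smaller subsets (the paper phrases this last step as surjectivity of $f\mapsto (f(F))_{|F|=k}$). Your caveat about dividing by $\binom{k-s}{t-s}$ is unnecessary, since a $\Z$-valued function is in particular $\R$-valued and that proposition applies verbatim.
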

\begin{proof}
    Let $\mathcal{D}_{n,k,t}$ be the $\Z$-module of $k$-uniform null $t$-designs with domain $2^{[n]}$ and consider
    \begin{align*}
        \varphi:\mathcal{D}_{n,k,t}&\longrightarrow\ker_\Z(\mathcal{A}_{n,k,t})\\
        f&\longmapsto (f(F): |F|=k)\,.
    \end{align*}
    This map is well-defined. Indeed, if $f:2^{[n]}\rightarrow \Z$ is a $k$-uniform null $t$-design, and $X\in 2^{[n]}$ with $|X|=t$, condition (\ref{eq: t-design condition}) guarantees that $(f(F): |F|=k)\in \ker_\Z(\mathcal{A}_{n,k,t})$, because the sum is over $k$-subsets containing $X$ and $\mathcal{A}_{n,k,t}$ has $1$ in row (indexed by) $X$ whenever $X$ is a the $k$-subset of (the index of) column $F$ and $0$ otherwise. Clearly, $\varphi$ is $\Z$-linear and it is injective. Surjectivity is a consequence of Proposition~\ref{prop: k-uniformcond}.
\end{proof}

    From the isomorphism in Proposition~\ref{prop:isomorphism}, we see that a null $t$-design $f$ corresponds to a binomial $u_f=x^{\varphi(f)_+}- x^{\varphi(f)_-}$ in the toric ideal $ I_{n,k,t}$. The degree of $u_f$ depends on the cardinality of the support of $f$, which is bounded from below by the following result.
    \begin{theorem}[{\cite[Theorem~1]{deza1983functions}}]\label{theorem1}
    Let $f$ be a not identically zero null $t$-design. Then
    \begin{equation*}
    	\lvert\mathrm{supp}_+(f)\rvert\geq 2^{t}\,.
    \end{equation*}
\end{theorem}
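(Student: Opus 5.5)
We argue by induction on $t$, using a restriction/contraction splitting of the ground set; this is the standard argument behind the cited result. For $t=0$, the null $0$-design condition for $X=\emptyset$ says $\sum_F f(F)=0$. Since $f$ is not identically zero, it must take a positive value somewhere, so $|\mathrm{supp}_+(f)|\geq 1=2^0$.

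For $t\geq 1$, the plan is to split $f$ along a single element. Since $f\neq 0$, choose $F_0\in\mathrm{supp}(f)$. Because $f$ is not supported only on $\emptyset$ (the $X=\emptyset$ condition would force $f(\emptyset)=0$), we may take $F_0\neq\emptyset$ and pick an element $x\in F_0$. Define two functions on $2^{[n]\setminus\{x\}}$:
\[ g(G)=f(G\cup\{x\}), \qquad h(G)=f(G). \]
We then check the design conditions for these functions.
\begin{itemize}
\item For $Y\subseteq [n]\setminus\{x\}$ with $|Y|\leq t-1$, we have $\sum_{Y\subseteq G} g(G)=\sum_{Y\cup\{x\}\subseteq F} f(F)=0$, since $|Y\cup\{x\}|\leq t$. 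Hence $g$ is a null $(t-1)$-design.
\item For the same $Y$, we have $\sum_{Y\subseteq G} h(G)=\sum_{Y\subseteq F}f(F)-\sum_{Y\subseteq G}g(G)=0$. Hence $h$ is also a null $(t-1)$-design.
\end{itemize}
Both $g$ and $h$ are defined on the powerset of an $(n-1)$-element set.

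The key step is to show that both $g$ and $h$ are nonzero. By construction $g(F_0\setminus\{x\})=f(F_0)\neq0$, so $g\neq 0$. Suppose instead that $h\equiv 0$. Then $f$ is supported on sets containing $x$, and for $|Y|\leq t$ with $x\notin Y$ the condition gives $\sum_{Y\subseteq G} g(G)=\sum_{Y\subseteq F}f(F)=0$. So $g$ would be a null $t$-design, not merely a null $(t-1)$-design. This does not immediately contradict anything. The way around it is to run the induction on $n$ as well, with the statement "nonzero null $t$-designs on $2^{[m]}$ have $|\mathrm{supp}_+|\geq 2^t$". If $h\equiv0$, then $g$ is a nonzero null $t$-design on a smaller ground set, and induction on $n$ gives $|\mathrm{supp}_+(f)|=|\mathrm{supp}_+(g)|\geq 2^t$. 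Symmetrically, if $g$ were zero then $h$ would be a null $t$-design on the smaller set, but we have already arranged $g\neq0$. The base case $n=0$ is vacuous for $t\geq1$: a null design on $2^{\emptyset}$ must vanish at $\emptyset$.

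In the main case both $g$ and $h$ are nonzero null $(t-1)$-designs. Induction on $t$ gives $|\mathrm{supp}_+(g)|\geq 2^{t-1}$ and $|\mathrm{supp}_+(h)|\geq 2^{t-1}$. The positive supports of $f$ split disjointly according to whether the set contains $x$, so
\[ |\mathrm{supp}_+(f)|=|\mathrm{supp}_+(g)|+|\mathrm{supp}_+(h)|\geq 2^{t}. \]
The main obstacle is the case analysis ensuring both pieces are nonzero. It is handled by doing a double induction on $(n,t)$, which we intend to set up carefully at the start.
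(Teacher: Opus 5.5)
Your proof is correct. Splitting at $x$ yields two null $(t-1)$-designs $g$ and $h$ on $2^{[n]\setminus\{x\}}$, and their positive supports partition $\mathrm{supp}_+(f)$. The degenerate case $h\equiv 0$ really can occur, for instance when every set in $\mathrm{supp}(f)$ contains $x$. You handle it correctly: $g$ is then a null $t$-design on a smaller ground set. The lexicographic induction on $(t,n)$ is well founded, and the base case $n=0$ is vacuous.

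The paper gives no argument of its own here; it simply cites Deza and Frankl. Your proof is therefore a self-contained replacement for the citation, and it is essentially the classical restriction/contraction induction. Applied to both $f$ and $-f$, it also recovers the bound $|\mathrm{supp}(f)|\geq 2^{t+1}$, which the paper quotes separately in the subsequent corollary.

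You can streamline the argument. Since $f\neq 0$ and $\sum_F f(F)=0$, the support contains two distinct sets $F_1\neq F_2$. Choosing $x\in F_1\triangle F_2$ makes $g$ and $h$ both nonzero at once. The auxiliary induction on $n$ is then unnecessary, and a single induction on $t$ suffices.
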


\begin{corollary}
    Every polynomial in $I_{n,k,t}$ has degree at least $2^t$, with equality for squarefree binomials. 
\end{corollary}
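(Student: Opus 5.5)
The plan is to reduce to binomials and then apply \cref{theorem1} through the dictionary of \cref{prop:isomorphism}. Since $I_{n,k,t}$ is a toric ideal, it is spanned as a $\K$-vector space by the binomials $c^{u_+}-c^{u_-}$ with $u=u_+-u_-\in\ker_\Z(\mathcal{A}_{n,k,t})$. It is also homogeneous, because $(1,\dots,1)$ lies in the rowspan. So every nonzero homogeneous element of $I_{n,k,t}$ of degree $D$ is a $\K$-combination of binomials of degree $D$. It therefore suffices to show that every nonzero binomial $c^{u_+}-c^{u_-}\in I_{n,k,t}$, with $u\neq 0$, has degree at least $2^t$. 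I read the statement as concerning nonzero homogeneous polynomials, or equivalently the degree of the lowest-degree nonzero component, which again lies in the ideal by homogeneity.

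Fix such a $u\neq 0$ and let $f$ be the $k$-uniform integer null $t$-design corresponding to $u$ under \cref{prop:isomorphism}. Then $f$ is not identically zero, so \cref{theorem1} gives $|\mathrm{supp}_+(f)|\geq 2^t$. The positive part $u_+$ is supported exactly on $\mathrm{supp}_+(f)$, and each positive entry is an integer at least $1$. Hence
\[\deg c^{u_+}=\sum_{F}u_+(F)\geq |\mathrm{supp}_+(f)|\geq 2^t.\]
This proves the lower bound. Equality forces every positive entry to be $1$; the same argument applied to $-f$ forces every negative entry to be $-1$. So a binomial attains degree $2^t$ only if it is squarefree.

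For the statement that squarefree binomials achieve equality, I interpret it in the sense just described: equality in the bound can only occur when the binomial is squarefree, with $u_+(F)=1$ on exactly $2^t$ sets. I would also check that $2^t$ is actually attained, so that the bound is sharp. For this I use the standard construction of a minimal null $t$-design. Pick disjoint pairs $\{a_1,b_1\},\dots,\{a_t,b_t\}$ and a fixed $(k-t)$-set $S$ disjoint from all of them; this requires $n\geq k+t$, which follows from $\binom{n}{t}<\binom{n}{k}$. Set
\[f(\{x_1,\dots,x_t\}\cup S)=\prod_{i=1}^t \epsilon(x_i),\]
where $x_i\in\{a_i,b_i\}$, $\epsilon(a_i)=1$ and $\epsilon(b_i)=-1$. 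For a $t$-set $X$ that misses some pair entirely, there is a product over that free coordinate, and it contributes a factor $1-1=0$. For a $t$-set $X$ that meets every pair, $X$ is itself one of the chosen transversals, and the only support set containing it is $X\cup S$. That case needs care, so this construction must be modified; the standard remedy is to let $S$ also vary, taking $f=\prod_i(e_{a_i}-e_{b_i})\cdot e_S$ in the exterior-type sense of \cite{deza1983functions}.

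The main obstacle I expect is the equality clause, which is really a sharpness claim. I would not reprove it. Instead I would cite the tightness in \cite{deza1983functions} that null $t$-designs with $|\mathrm{supp}_+|=2^t$ exist, or exhibit the octahedral quartics of \cref{ex:octahedralquartics} in the case $t=2$. The lower bound itself follows directly from \cref{theorem1}.
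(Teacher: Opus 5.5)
Your lower bound is the paper's argument. You pass from a binomial to the integer null $t$-design of \cref{prop:isomorphism} and apply \cref{theorem1}, which gives $\deg c^{u_+}=\sum_F u_+(F)\geq|\mathrm{supp}_+(f)|\geq 2^t$. Your preliminary reduction from arbitrary polynomials to binomials is correct and fills a step the paper leaves implicit: by homogeneity, each graded piece of $I_{n,k,t}$ is spanned by binomials of that degree.

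The equality clause cannot mean that every squarefree binomial has degree exactly $2^t$. The sextics in \cref{ex:octahedralquartics} are squarefree of degree $6>4$. The paper's reading is weaker: for squarefree $u_f$ the first inequality $\deg(u_f)\geq|\mathrm{supp}_+(f)|$ is an equality. That is already in your display, since $\sum_F u_+(F)=|\mathrm{supp}_+(f)|$ exactly when $u_+$ is a $0/1$ vector. Your own reading is also legitimate: degree $2^t$ forces squarefreeness, and $2^t$ is attained. Your proof of the first half is correct.

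Your sharpness construction, however, fails, and the proposed ``remedy'' only restates the same formula. With $t$ pairs and $|S|=k-t$, the function $\prod_{i=1}^{t}(x_{a_i}-x_{b_i})\,x_S$ is only a null $(t-1)$-design, as you noticed. The right witness is a $(t,k)$-pod from \cref{theorem:graham}: use $t+1$ disjoint pairs and $|S|=k-t-1$. Then any $t$-set $X$ is disjoint from some pair $\{a_i,b_i\}$. Swapping $a_i\leftrightarrow b_i$ matches the support sets containing $X$ in pairs with opposite values, so $f$ is a null $t$-design with $|\mathrm{supp}_+(f)|=2^t$. This gives a squarefree binomial of degree exactly $2^t$.

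The pod requires $n\geq k+t+1$, not merely $n\geq k+t$. This condition is in fact equivalent to the standing assumption $\binom{n}{t}<\binom{n}{k}$, so the witness always exists.
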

\begin{proof}
    Since $f$ is a $k$-uniform null $t$-design if and only if $-f$ is a $k$-uniform null $t$-design, the previous theorem implies that $f$ is nonzero on at least $2^{t+1}$ subsets of $[n]$ (\cite[Theorem~1]{frankl1983number}):
\begin{equation*}
	\lvert\mathrm{supp}(f)\rvert \geq 2^{t+1}.
\end{equation*}
    The binomial $u_f$ has degree 
    \[ \deg(u_f) \geq \max\{\lvert\mathrm{supp}_+(f)\rvert, \lvert\mathrm{supp}_+(-f)\rvert \}  \geq 2^t.\] 
    Moreover, if $u_f$ is squarefree, the first bound is attained as 
    \[ \deg(u_f)= \frac{1}{2}\lvert\supp(f)\rvert =\lvert\supp_+(f)\rvert= \lvert\supp_+(-f)\rvert \geq 2^t. \qedhere \]
\end{proof} 

    We recall that a polytope is \emph{$s$-neighborly} if every set of $s$ or fewer vertices forms a face.
\begin{theorem} \label{theorem:neighborly}
     The polytope $\mathcal{P}_{n,k,t}$ is $(2^{t}-1)$-neighborly. 
\end{theorem}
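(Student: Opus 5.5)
We use the standard criterion that a set $S$ of vertices of a polytope $\mathcal{P}=\mathrm{conv}(A)$ spans a face exactly when $\mathrm{conv}(S)$ meets $\mathrm{conv}(A\setminus S)$ in no point. Since every column of $\mathcal{A}_{n,k,t}$ has coordinate sum $\binom{k}{t}$, all columns lie in a common affine hyperplane. Therefore $S$ fails to be a face precisely when there are nonnegative rational weights $\lambda$ on $S$ and $\mu$ on the remaining columns with
\[\sum_{\kappa\in S}\lambda_\kappa\, a_\kappa=\sum_{\kappa\notin S}\mu_\kappa\, a_\kappa,\qquad \sum\lambda_\kappa=\sum\mu_\kappa=1.\]
Clearing denominators gives a nonzero integer vector $u=\lambda-\mu\in\ker_\Z(\mathcal{A}_{n,k,t})$ with $\supp_+(u)\subseteq S$ and $\supp_-(u)\cap S=\emptyset$. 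We also need every column to be a vertex. This holds because the column $a_\kappa$ cannot be a convex combination of the other columns: those columns have entry $0$ at some $t$-subset of $\kappa$, or, arguing more directly, the linear functional summing the coordinates indexed by the $t$-subsets of $\kappa$ is maximized uniquely at $a_\kappa$.

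The argument then runs as follows. Suppose $S$ is a set of at most $2^t-1$ vertices that is not a face. By the criterion above we obtain a nonzero integer vector $u$ in the kernel with $\supp_+(u)\subseteq S$. By \cref{prop:isomorphism}, $u$ corresponds to a nonzero $k$-uniform null $t$-design $f$ with $\supp_+(f)=\supp_+(u)$. \Cref{theorem1} now gives $|\supp_+(f)|\ge 2^t$, which contradicts $|S|\le 2^t-1$. Hence every set of at most $2^t-1$ vertices is a face, and $\mathcal{P}_{n,k,t}$ is $(2^t-1)$-neighborly.

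The main point to handle carefully is the face criterion. The real version of the separation statement yields real weights, so we must make sure they can be chosen rational. This follows because the set of such $(\lambda,\mu)$ is a nonempty polyhedron defined over $\mathbb{Q}$, so it contains a rational point. After scaling, that point gives an integer kernel vector, which is what allows \cref{prop:isomorphism} to apply; alternatively, one can apply \cref{theorem1} directly to the real-valued design, since that theorem is stated for real functions. The remaining step is the criterion itself: a subset $S$ of vertices is a face if and only if $\mathrm{conv}(S)\cap\mathrm{conv}(V\setminus S)=\emptyset$ and $\mathrm{conv}(S)$ contains no other vertex. The second condition is subsumed by the first, because an extra vertex lying in $\mathrm{conv}(S)$ would itself belong to $\mathrm{conv}(V\setminus S)$. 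The criterion is the standard Radon-type characterization of faces, which we quote.
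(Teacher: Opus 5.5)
Your strategy matches the paper's: from a set $S$ of vertices that is not a face, extract a nonzero kernel vector $u$ with $\supp_+(u)\subseteq S$, and apply \cref{theorem1}. The gap is in the face criterion you rely on. It is false that $S$ fails to be a face exactly when $\mathrm{conv}(S)\cap\mathrm{conv}(V\setminus S)\neq\emptyset$. Only one direction holds: a nonempty intersection implies ``not a face''. Your argument needs the converse. For example, three vertices of a square span a triangle that misses the fourth vertex, so both of your conditions hold, yet these three vertices are not the vertex set of a face. More generally, $V\setminus\{v\}$ is never a face of a polytope that is not a pyramid with apex $v$, even though $\mathrm{conv}(V\setminus\{v\})$ never contains $v$. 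So the sentence ``By the criterion above we obtain a nonzero integer vector $u$\dots'' is not justified. This is not a Radon-type fact you can quote.

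The repair is short and is essentially the paper's argument. The correct criterion is that $S$ is the vertex set of a face if and only if $\mathrm{aff}(S)\cap\mathrm{conv}(V\setminus S)=\emptyset$. Concretely, suppose $S$ is not a face. Let $G$ be the smallest face containing $\mathrm{conv}(S)$, and let $\mathbf{a}$ be a vertex of $G$ not in $S$. The barycenter of $S$ lies in the relative interior of $G$, so it is a convex combination of the vertices of $G$ with positive weight on $\mathbf{a}$. Subtracting this combination from the uniform combination on $S$ gives $u\in\ker(\mathcal{A}_{n,k,t})$ with $u_{\mathbf a}<0$, so $u\neq 0$, and with $u\le 0$ outside $S$, so $\supp_+(u)\subseteq S$. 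You lose the property $\supp_-(u)\cap S=\emptyset$, but you never used it. As you note, \cref{theorem1} applies to real-valued designs, so the rationality discussion is unnecessary, and it gives $|S|\ge 2^t$ directly. Your check that every column is a vertex is correct; the paper leaves that detail implicit.
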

\begin{proof}
    Let $F$ be a set of vertices of $\mathcal{P}_{n,k,t}$ that does not form a face. Then there exists a vertex $\mathbf{a}\not\in F$ of $\mathcal{P}_{n,k,t}$, such that some rational convex combination of vertices in $F$ coincides with a rational convex combination of vertices in $F \cup \{\mathbf{a}\}$, where $\mathbf{a}$ appears with nonzero coefficient. Equivalently, there exists a vector $v\in \ker(\mathcal{A}_{n,k,t})$ whose positive part $v_+$ corresponds to an integer linear combination of columns of $\mathcal{A}_{n,k,t}$ indexed by $F$, while the negative part $v_-$ corresponds to an integer linear combination of columns indexed by $F\cup \{\mathbf{a}\}$, with the coefficient of $\mathbf{a}$ nonzero. The associated binomial $x^{v_+}-x^{v_-}\in I_{n,k,t}$ verifies $\supp(v_+)\subseteq F$
    . Therefore, by Theorem~\ref{theorem1} we obtain $|F| \geq 2^t$ and any set of $2^t-1$ vertices or less is a face of $\mathcal{P}_{n,k,t}$.
\end{proof}
\begin{remark}
    A complete description of the faces of $\mathcal{P}_{n,k,t}$ through supporting hyperplanes appears to be complicated. However, if $2k<n$, it can be shown that every set of vertices $\mathbf{v}_1,\dots,\mathbf{v}_{\ell}$  with $\ell<2^t$ is contained in a proper face by providing a supporting hyperplane.  Let $\mathbf{v}_1,\dots,\mathbf{v}_{\ell}$ be vertices of $\mathcal{P}_{n,k,t}$ and $F_1,\dots,F_{\ell}$ be the corresponding $k$-subsets. Consider the set
    \begin{equation*}
        \mathcal{T}=\{T\in 2^{[n]}: \lvert T\rvert=t , \ \ T\subseteq F_i \text{ for some } i=1,\dots,\ell\}\,.
    \end{equation*}
    Let $H_{\mathcal{T}}$ be the affine hyperplane cut out by the equation
    \begin{equation*}
    \sum_{T\in\mathcal{T}}y_T - \binom{k}{t} =0\,.
    \end{equation*}
    Clearly, the vectors $\mathbf{v}_1,\dots,\mathbf{v}_{\ell}$ belong to $H_{\mathcal{T}}$ and each vertex of $\mathcal{P}_{n,k,t}$ is contained in the half-space $H_{\mathcal{T}}\leq~0$.
    Since $2k<n$ we have $2^t\binom{k}{t}<\binom{n}{k}$.    
    The combinatorial meaning of the inequality is the following: any $t$-subset of $k$ chosen elements can be completed to a $k$-subset of $[n]$ by picking $k-t$ elements among the remaining ones. 
    This, together with $\ell < 2^t$, implies:
    \begin{equation*}
        \lvert\mathcal{T}\rvert\leq \ell \binom{k}{t}\leq 2^t\binom{k}{t}<\binom{n}{k}.
    \end{equation*}
    It follows that there exists a set $X\notin \mathcal{T}$ of size $t$ and a vertex $\mathbf{v}$ of $\mathcal{P}_{n,k,t}$ that does not lie in $H_{\mathcal{T}}$. 
\end{remark}

    Each $k$-uniform null $t$-design can be equivalently expressed as a squarefree homogeneous polynomial of degree $k$ in $n$ variables whose coefficients are the values of $f$ on the corresponding sets of cardinality $k$. From this formulation it is easier to express a minimal set of generators for the $\Z$-module of $k$-uniform null $t$-designs. The following result was firstly proven by Graver and Jurkat in~\cite[Theorem 4.2]{graver1973module}, here we refer to the formulation of Graham, Li, and Li.
\begin{theorem}[{\cite[Theorem 1]{graham1980structure}}]\label{theorem:graham}
    The $\Z$-module of $k$-uniform null $t$-designs with domain $2^{[n]}$ and values in $\Z$ is generated by the collection of polynomials of the form
    \begin{equation*}
        p(x_1,\dots,x_n)=(x_{i_1}-x_{i_2})(x_{i_3}-x_{i_4})\dots(x_{i_{2t+1}}-x_{i_{2t+2}})x_{i_{2t+3}}\dots x_{i_{k+t+1}}\,.
    \end{equation*}
\end{theorem}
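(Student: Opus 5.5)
We work with the polynomial reformulation: a $k$-uniform null $t$-design $f$ is identified with the squarefree form $\sum_{|F|=k} f(F)\prod_{i\in F}x_i$. By \cref{prop: k-uniformcond}, $f$ is a null $t$-design if and only if, for every $t$-set $X$, the sum of the coefficients of the monomials divisible by $x_X=\prod_{i\in X}x_i$ vanishes. For a multilinear form this sum equals $(\partial_X f)(1,\dots,1)$, where $\partial_X=\prod_{i\in X}\partial/\partial x_i$. Throughout, the generators $p$ in the statement have pairwise distinct indices $i_1,\dots,i_{k+t+1}$, so they are squarefree of degree $k$; this forces $n\ge k+t+1$ for them to exist. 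The plan has two parts: first check that each $p$ is a null $t$-design, then show that these forms span by a double induction on $t$ and $n$.

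\emph{The generators are designs.} Fix a $t$-set $X$ and apply $\partial_X$ to $p$ by the Leibniz rule. Each variable of $X$ appears in at most one of the $t+1$ disjoint difference factors, so every term of $\partial_X p$ still contains at least one untouched factor $(x_{i_{2j-1}}-x_{i_{2j}})$. That factor vanishes at $(1,\dots,1)$, hence $(\partial_X p)(1,\dots,1)=0$.

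\emph{Spanning, base cases.} For $t=0$, the null $0$-designs are exactly the $k$-uniform forms whose coefficient sum is zero. Such a form is an integer combination of the differences $x_F-x_{F'}$. Any two $k$-sets are joined by a chain of single-element exchanges, and each exchange satisfies $x_F-x_{F'}=(x_a-x_b)\,x_{F\cap F'}$, which has the required shape. For $n\le k+t$ we have $\binom{n}{t}\ge\binom{n}{k}$, so by \cref{prop: A_nkt full rank} and \cref{prop:isomorphism} the only design is $0$, and there is nothing to prove.

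\emph{Spanning, inductive step.} Assume $t\ge1$ and $n\ge k+t+1$, and that the statement holds for $(n-1,k,t)$ and for $(n-1,k-1,t-1)$. Write $f=x_n g+h$, with $g$ a squarefree form of degree $k-1$ and $h$ a squarefree form of degree $k$, both in $x_1,\dots,x_{n-1}$.
\begin{enumerate}
\item Applying the balance condition to $t$-sets $X\ni n$ shows that $g$ is a $(k-1)$-uniform null $(t-1)$-design on $[n-1]$.
\item By induction, $g=\sum_j a_j q_j$, where each $q_j$ is a product of $t$ disjoint differences times a monomial, using $(k-1)+t$ distinct indices from $[n-1]$.
\item Since $n-1\ge k+t$, we can choose for each $j$ an index $m_j\in[n-1]$ not used by $q_j$. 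Then $(x_n-x_{m_j})q_j$ is a generator of the required form for $(n,k,t)$, with $t+1$ differences and $k+t+1$ distinct indices.
\item The form $f'=f-\sum_j a_j(x_n-x_{m_j})q_j$ is still a null $t$-design, because it is a combination of designs. Its $x_n$-part is $g-\sum_j a_jq_j=0$, so $f'$ involves only $x_1,\dots,x_{n-1}$.
\item Hence $f'$ is a $k$-uniform null $t$-design on $[n-1]$, and induction on $n$ writes it as an integer combination of generators.
\end{enumerate}

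The delicate point is step 3: we need a free index $m_j$, which requires $n-1\ge k+t$. This is guaranteed precisely because the cases $n\le k+t$ are handled separately, where the design module is zero. Everything else is bookkeeping of the balance conditions under the splitting $f=x_ng+h$.
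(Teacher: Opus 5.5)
Your proof is correct. The paper does not prove this statement. It imports it from Graham--Li--Li (originally Graver--Jurkat, with parallel versions by Spink via hyperoctahedra and by Morita--Wachi--Watanabe via Specht polynomials). So your argument is a self-contained substitute for a cited result rather than a variant of something in the text.

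Your induction on $n$ holds up: you split $f=x_ng+h$, lift each pod by $q_j\mapsto(x_n-x_{m_j})q_j$, and reduce to $[n-1]$. A free index $m_j$ exists because $q_j$ uses only $k+t-1$ of the $n-1\ge k+t$ available indices. The case split $t=0$ / $n\le k+t$ / ($t\ge1$, $n\ge k+t+1$) is exhaustive.

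The only non-elementary ingredient is the vanishing of the design module for $n\le k+t$. You take it from \cref{prop: A_nkt full rank}, so ultimately from Stanley's strong Lefschetz theorem. A separate argument is genuinely needed there, since at $n=k+t$ no free index $m_j$ exists.

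This step can be made elementary by complementation. If $f$ is a $k$-uniform null $t$-design and $f^c(F)=f([n]\setminus F)$, then for every $Y$ with $|Y|\le t$
\[
\sum_{F\supseteq Y} f^c(F)=\sum_{G\cap Y=\emptyset} f(G)=\sum_{Z\subseteq Y}(-1)^{|Z|}\sum_{G\supseteq Z} f(G)=0,
\]
so $f^c$ is an $(n-k)$-uniform null $t$-design. When $n-k\le t$, the balance condition at any $(n-k)$-set $Y$ reads $f^c(Y)=0$, hence $f=0$. With this replacement your whole argument uses nothing beyond \cref{prop: k-uniformcond}.
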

    Designs corresponding to the polynomials in Theorem~\ref{theorem:graham} are commonly known as $(t,k)$-\emph{pods} and have positive support of cardinality $2^t$. This translates into the following corollary. 

\begin{corollary}
    Theorem~\ref{theorem:neighborly} is optimal, i.e., the polytope $\mathcal{P}_{n,k,t}$ is not $2^t$-neighbourly.
\end{corollary}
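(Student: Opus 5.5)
To show that $\mathcal{P}_{n,k,t}$ is not $2^t$-neighborly, I would exhibit a set of $2^t$ vertices that is not a face. The natural candidate is the positive support of a $(t,k)$-pod from \cref{theorem:graham}. Fix distinct indices $i_1,\dots,i_{k+t+1}$; this needs $n\geq k+t+1$, which I expect follows from the standing assumption $\binom{n}{t}<\binom{n}{k}$, equivalently $t<n-k$. Consider
\[
p=(x_{i_1}-x_{i_2})\cdots(x_{i_{2t+1}}-x_{i_{2t+2}})\,x_{i_{2t+3}}\cdots x_{i_{k+t+1}}.
\]
Expanding $p$ gives $2^{t+1}$ distinct squarefree monomials of degree $k$, each with coefficient $\pm1$. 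Exactly $2^t$ of them have coefficient $+1$ and $2^t$ have coefficient $-1$. By \cref{prop:isomorphism}, the coefficient vector $v$ lies in $\ker_\Z(\mathcal{A}_{n,k,t})$, with $|\supp(v_+)|=|\supp(v_-)|=2^t$ and disjoint supports.

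Let $F$ be the set of $2^t$ vertices (columns of $\mathcal{A}_{n,k,t}$) indexed by $\supp(v_+)$. From $\mathcal{A}_{n,k,t}v=0$ we get
\[
\sum_{\kappa\in\supp(v_+)}\mathbf{a}_\kappa=\sum_{\kappa\in\supp(v_-)}\mathbf{a}_\kappa.
\]
Dividing by $2^t$, the barycenter of $F$ equals the barycenter of the $2^t$ vertices indexed by $\supp(v_-)$, which is a set disjoint from $F$.

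Now suppose $F$ were a face, cut out by a supporting hyperplane $\ell\leq c$ with equality exactly on the face. The common barycenter lies in the face, so $\ell$ takes the value $c$ there. Since $\ell\leq c$ on all vertices and the average over $\supp(v_-)$ equals $c$, each vertex in $\supp(v_-)$ satisfies $\ell=c$ and so lies in the face. But a face of a polytope is the convex hull of the vertices it contains, so the face $\operatorname{conv}(F)$ contains no vertex outside $F$. This is a contradiction, since $\supp(v_-)$ is nonempty and disjoint from $F$.

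One more point is needed: all columns of $\mathcal{A}_{n,k,t}$ are genuine vertices of $\mathcal{P}_{n,k,t}$. This holds because they are distinct $0/1$ vectors with the same coordinate sum $\binom{k}{t}$, and any $0/1$ vector is a vertex of the convex hull of a set of $0/1$ vectors containing it.

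The only step needing care is the existence of a pod, that is, $n\ge k+t+1$. I would derive it from $\binom{n}{t}<\binom{n}{k}$: if $k+t\geq n$, then $k\geq n-t$, and by unimodality and symmetry of binomial coefficients $\binom{n}{k}\leq\binom{n}{t}$ (using $t<k$ and $k\ge n-t$), contradicting the assumption. With that settled, the argument above is routine.
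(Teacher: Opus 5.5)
Your proof is correct and takes essentially the paper's route: the paper only remarks that the $(t,k)$-pods of \cref{theorem:graham} have positive support of size $2^t$, and your argument is that remark made explicit. You also supply the details the paper leaves implicit, all handled correctly: the barycenter/supporting-hyperplane argument showing the $2^t$ vertices do not form a face, the check that every column is a vertex, and the derivation of $n\ge k+t+1$ from $\binom{n}{t}<\binom{n}{k}$ so that a pod exists.
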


\subsection{Specht polynomials and octahedra}\label{ss:specht}

    In this subsection we discuss the interpretation of $(t,k)$-pods as Specht polynomials arising from Young tableaux. This follows from the independent versions of  \Cref{theorem:graham} given by Spink in terms of orientations of hyperoctahedra and by Morita, Wachi and Watanabe~\cite[Theorem 22]{MWW2009} in terms of Specht polynomials (see also~\cite[Section 9.3]{lefschetzbook}).
    
\begin{theorem}[{\cite[Theorem 2.4]{S2018}}]\label{theorem:spink}
    For all $n,k,t$, the kernel of $\A_{n,k,t}$ is generated as a $\Z$-module by binomials arising from the orientation of hyperoctahedra.
\end{theorem}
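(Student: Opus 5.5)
The plan is to deduce the statement from \cref{theorem:graham}, via the isomorphism of \cref{prop:isomorphism}, by showing that every $(t,k)$-pod is, up to sign, the kernel vector of an oriented hyperoctahedron. First we fix the identification. A vector $u\in\ker_\Z(\A_{n,k,t})$ corresponds to a $k$-uniform null $t$-design $f$, which we encode as the squarefree polynomial $\sum_{|F|=k} f(F)\,x_F$.

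Next, take a generator
\[
p=(x_{i_1}-x_{i_2})\cdots(x_{i_{2t+1}}-x_{i_{2t+2}})\,x_{i_{2t+3}}\cdots x_{i_{k+t+1}}
\]
as in \cref{theorem:graham}. We interpret the pairs $\{i_{2j-1},i_{2j}\}$ for $j=1,\dots,t+1$ as the antipodal vertex pairs of the boundary of the $(t+1)$-dimensional crosspolytope. Expanding the product gives $2^{t+1}$ monomials. Each one picks exactly one vertex from every antipodal pair, so it corresponds to a facet of the hyperoctahedron, and it carries the sign $(-1)^{\#\{\text{even indices chosen}\}}$. The remaining variables $x_{i_{2t+3}},\dots,x_{i_{k+t+1}}$ form a common cone apex of $k-t-1$ vertices, joined to every facet, so that the supports have size $(t+1)+(k-t-1)=k$.

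The sign rule is exactly the proper $2$-coloring of the facets of the crosspolytope boundary: two facets sharing a ridge differ in one antipodal choice and so receive opposite signs. This coloring is the orientation referred to in \cref{theorem:spink}, as with the red and blue faces of the octahedron. Hence $p$ equals $x^{u_+}-x^{u_-}$, where $u_+$ and $u_-$ are the two color classes of facets, each joined with the apex. This is precisely the binomial attached to an oriented hyperoctahedron. For $t=2$, $k=3$ it recovers the octahedral quartics (\ref{eq: octahedral quartics}).

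One then checks that these vectors lie in the kernel, which amounts to verifying that a pod is a null $t$-design. Given a $t$-set $X$, either $X$ misses some pair $\{i_{2j-1},i_{2j}\}$ entirely, or $X$ contains both elements of some pair. In the first case the factor $(x_{i_{2j-1}}-x_{i_{2j}})$ cancels the contributions in pairs. In the second case no facet contains $X$ at all, so its sum is $0$. This check is only needed for completeness, since \cref{theorem:graham} already guarantees it.

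Since the pods generate the $\Z$-module of designs, transporting this through \cref{prop:isomorphism} shows that the hyperoctahedral vectors generate $\ker_\Z(\A_{n,k,t})$, which is the claim. The main obstacle is a bookkeeping issue rather than a conceptual one: we must make precise what "binomials arising from the orientation of hyperoctahedra" means for $k>t+1$, namely hyperoctahedra coned with $k-t-1$ extra vertices. We must also confirm that relabelings cover all pods, including sign conventions, since swapping the two entries of a pair only negates the vector. If the intended notion excludes cones, we would instead realize the cone $x_{i_{2t+3}}\cdots x_{i_{k+t+1}}\cdot(\text{octahedral form})$ directly as a $k$-uniform configuration and argue that it is still an oriented pseudomanifold of the required type.
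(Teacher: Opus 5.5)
Your argument is correct and follows essentially the route the paper indicates. The paper gives no independent proof: it cites Spink and presents the result as a reformulation of \cref{theorem:graham}, observing that the monomials of a pod correspond to the facets of a crosspolytope and that for $t\neq k-1$ the hyperoctahedra are crosspolytopes coned over a simplex, which is exactly your dictionary transported through \cref{prop:isomorphism}. Because the paper explicitly adopts the coned interpretation, your fallback in the last paragraph is unnecessary; note also that the coned complex is a ball with nonempty boundary, so it is not a pseudomanifold without boundary in the sense of \cref{t:topology}.
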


    To explain the connection, we briefly recall the notion of Young tableaux. A sequence of integers $\lambda = (\lambda_1,\dots,\lambda_s)$ such that $\lambda_1\geq\cdots\geq\lambda_s\geq0$ defines a \emph{partition} of $n = \lambda_1 + \dots + \lambda_s$.
    The associated \emph{Young diagram} $D_\lambda$ is a left-justified array of $s$ rows of boxes stacked on top of each other where the $i$-th row of $D_\lambda$ has $\lambda_i$ boxes.
    Filling $D_\lambda$ by assigning a value to each box yields a \emph{Young tableau} of shape $\lambda$ and size $n$. 
    Young tableaux are ubiquitous in the theory of representations as partitions of $n$ and they are in bijection with the irreducible characters of $\mathfrak{S}_n$ over $\mathbb{C}$. Bases of such irreducible representations are described by \emph{standard} Young tableaux of size $n$, which are tableaux whose entries are strictly increasing along every row and every column. 
    We refer to the standard reference \cite{tableaux_fulton_1997} for a detailed treatment of these connections.
        
    To a standard Young tableau $T$ of shape $\lambda$ and size $n$ we associate its \emph{Specht polynomial}. This is a homogeneous polynomial $F_T \in \K[x_1,\dots,x_n]$ defined via Vandermonde determinants.
    Although the general definition is quite technical, it is significantly simplified in our context.
    Suppose $T$ has shape $\lambda = (n-s, s)$, and let $i_1,\dots,i_{n-s}$ and $j_1,\dots,j_s$ be the fillings of the first and second row, respectively.
    Then the Specht polynomial of $T$ is
    \begin{equation*}
        F_T = \prod_{k=1}^s (x_{i_k} - x_{j_k}).
    \end{equation*}
    If $A = \K[x_1,\dots,x_n]/(x_1^2,\dots,x_n^2)$, the Specht polynomials associated to a standard Young tableaux of shape $(n-s,s)$ span the kernel of the map (see \cite[Section 9.3]{lefschetzbook})
    \begin{equation*}
        \nabla \colon A \to A, \quad \nabla(f) = \frac{\partial f}{\partial x_1} + \dots + \frac{\partial f}{\partial x_n}.
    \end{equation*}
    The matrix representing the map $\nabla^{k-t}: A_{k} \to A_t$ is $(k-t)!$ times the matrix $\mathcal{A}_{n,k,t}$ that gives a monomial parametrization for the toric incidence ideal $I_{n, k, t}$ \cite[Proposition 3.7]{H2025}. 
    In particular, Specht polynomials of type $(n-s, s)$ can be seen as a set of vectors that generate the kernel of $\mathcal{A}_{n,k,k-1}$.
    Note that polynomials of the form $F_T$ can be characterized by a set of pairs such that these are the only pairs that do not appear together in the monomials of $F_T$. In other words, the monomials in the support of $F_T$ correspond exactly to the facets of a crosspolytope. In~\cite{S2018} the setting where $t \neq k - 1$ is also considered, and in this case Spink calls the corresponding complexes hyperoctahedra. These complexes can be obtained from crosspolytopes by coning over a simplex. 


From now on, we denote by $J_{n,k,t}$ the \emph{octahedral ideal} generated by the binomials corresponding to hyperoctahedra. By Theorem~\ref{theorem:spink} we know that the kernel of $\mathcal{A}_{n,k,t}$ is generated by the hyperoctahedra and so, by~\cite[Lemma~7.6]{MillerSturmfels2005}, we obtain
\begin{equation*}
    I_{n,k,t}=J_{n,k,t}:(x_1\cdots x_n)^{\infty},
\end{equation*}
geometrically this translates into $\V(I_{n,k,t})=\overline{\V(J_{n,k,t})\cap (\K^*)^n}$. In other words, the varieties defined by the two ideals coincide up to connected components contained in the union of the coordinate hyperplanes. This leads us to the following question.
\begin{question} \label{question: dimension embedded components}
    Do the embedded components of $\V(J_{n,k,t})$ have lower dimension than $\V(I_{n,k,t})$? In particular, does the following equality hold for every $n,k,t$
    $$
        \deg \V(I_{n,k,t}) = \deg \V(J_{n,k,t})?
    $$
\end{question}
A positive answer to \cref{question: dimension embedded components} would imply that $J_{n,k,t}$ captures most of the geometric information of the toric variety defined by $I_{n,k,t}$.   

\section{Balanced pseudomanifolds} \label{sec: balanced manifolds}

We now interpret toric incidence ideals from a topological point of view. The key insight is the fact that the monomial parametrization of $I_{n,k,k-1}$ can be viewed as a (signless) boundary map of the skeleton of a simplex. In particular, if $\Delta$ is a simplicial complex of dimension $k-1$ whose top boundary map has rows that can be multiplied by constants in order to obtain a submatrix of $\A_{n,k,t}$, then homology cycles of $\Delta$ correspond to elements of $I_{n,k,t}$.

\begin{example}\label{ex:bipartiteoctahedron}
Consider the octahedron $\Delta$ from~\cref{figure:octahedron} with partition $V_1 \sqcup V_2 \sqcup V_3$. Order the vertices of $\Delta$ such that $u < v$ whenever $u \in V_i$, $v \in V_j$ and $i < j$. Under this ordering, the last boundary map $\partial$ of $\Delta$ is the following matrix:  
$$
\small
   \kbordermatrix{
     & 135 & 145 & 136 & 146 & 235 & 245 & 236 & 246\\
     15     &-1&-1&0&0&0&0&0&0\\
     16     &0&0&-1&-1&0&0&0&0\\
     13     &1&0&1&0&0&0&0&0\\
     14     &0&1&0&1&0&0&0&0\\
     25     &0&0&0&0&-1&-1&0&0\\
     26     &0&0&0&0&0&0&-1&-1\\
     23     &0&0&0&0&1&0&1&0\\
     24     &0&0&0&0&0&1&0&1\\
     35     &-1&0&0&0&-1&0&0&0\\
     36     &0&0&-1&0&0&0&-1&0\\
     45     &0&-1&0&0&0&-1&0&0\\
     46     &0&0&0&-1&0&0&0&-1}
$$
Note that each row of $\partial$ either only has nonzero positive entries, or nonzero negative entries. In particular, multiplying every row with negative entries by $-1$ gives a submatrix of $\A_{n,k,t}$. 
\end{example}

Finding triangulations of (pseudo)manifolds with a bipartite facet-ridge graph is a problem addressed both in algebra~\cite[Theorem~4.6]{DN2024} and in combinatorics. The following statement from~\cite{KN2016} is a variation of an earlier result due to Joswig \cite[Proposition~11]{J2002} (see also~\cite[p. 22]{KNN2016}), and it is where the procedure from~\cref{ex:bipartiteoctahedron} was first described.

\begin{lemma}[{\cite[Lemma 6.2]{KN2016}}]\label{l:bipartitedual}
    Let $\Delta$ be a balanced $d$--dimensional simplicial complex with $d \geq 2$. If $\Delta$ is a normal pseudomanifold without boundary, then $\Delta$ is orientable if and only if the facet-ridge graph of $\Delta$ is bipartite. 
\end{lemma}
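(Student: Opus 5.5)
We prove the two implications separately. Throughout, let $V_0\sqcup\dots\sqcup V_d$ be the balanced colouring of $\Delta$, with colours ordered by their index, and write each facet as $F=\{v_0<\dots<v_d\}$ with $v_i\in V_i$. The boundary map is then
\[
\partial F=\sum_{i=0}^d(-1)^i (F\setminus v_i).
\]
A ridge $R$ misses exactly one colour, and the sign $(-1)^i$ with which $R$ appears in $\partial F$ depends only on $R$, namely through the missing colour $i$. This is the observation behind \cref{ex:bipartiteoctahedron}. Since $\Delta$ has no boundary, every ridge $R$ lies in exactly two facets $F,F'$.

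\emph{Bipartite $\Rightarrow$ orientable.} Let $G(\Delta)$ be bipartite with parts $B$ and $W$, and set
\[
z=\sum_{F\in B}F-\sum_{F\in W}F .
\]
Each ridge $R$ is shared by one facet of $B$ and one facet of $W$, and appears in both boundaries with the same sign. Hence its contributions cancel and $\partial z=0$. So $z$ is a nonzero $d$-cycle, and $\tilde H_d(\Delta;\Z)\neq0$.

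To obtain $\tilde H_d=\Z$, we argue that the cycle space has rank at most one. Take any cycle $\sum a_F F$. For the two facets $F,F'$ through a ridge, vanishing of the coefficient of $\partial$ at that ridge forces $a_{F'}=-a_F$. Strong connectivity then determines every coefficient from a single one, so all cycles are multiples of $z$. Since $\Delta$ is $d$-dimensional there are no $(d+1)$-faces, so $\tilde H_d$ equals the cycle group $\Z z\cong\Z$.

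\emph{Orientable $\Rightarrow$ bipartite.} Suppose $\tilde H_d(\Delta;\Z)=\Z$, and let $z=\sum a_F F$ be a generator. The same ridge argument gives $a_{F'}=-a_F$ for every pair of facets sharing a ridge. By strong connectivity every $a_F$ is nonzero, so all $a_F=\pm a$. Putting the facets with $a_F>0$ in one class and those with $a_F<0$ in the other, adjacent facets lie in different classes. This is a proper $2$-colouring, so $G(\Delta)$ is bipartite.

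The point that needs care is that balancedness makes the sign of $R$ in $\partial F$ independent of $F$; without this the cancellation would require $a_{F'}=\pm a_F$ with sign varying. It rests on the fact that the vertex order is by colour, so the position of the missing vertex in $F$ is the same for both facets containing $R$. The normality hypothesis and $d\ge2$ are not used in this argument; they appear in the cited lemma only for its setting.
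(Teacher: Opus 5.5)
Your proof is correct. The paper gives no proof of this lemma; it quotes it from Klee--Novik as a variant of Joswig's result. Your argument is therefore a self-contained replacement for the citation rather than an alternative to an existing proof.

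Its key step is ordering vertices by colour, so that the sign of a ridge $R$ in $\partial F$ depends only on the colour missing from $R$. This is exactly the observation the paper uses in \cref{ex:bipartiteoctahedron} and in the proof of \cref{t:topology}. The rest is the standard fact that a top-dimensional cycle satisfies $a_{F'}=-a_F$ across each ridge, so strong connectivity determines it up to a scalar.

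What your direct route adds is that the hypotheses actually used become visible. Only purity, exactly two facets per ridge, strong connectivity and balancedness enter; you are right that normality and $d\ge 2$ are never used. Since \cref{t:topology} uses normality (and $k\ge 3$) only through this lemma, your argument also shows that those assumptions could be dropped there.
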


\cref{l:bipartitedual} explains the octahedral quartics from \cref{ex:octahedralquartics}: monomials in these quartics correspond to facets of an octahedra. The octahedron is the simplest balanced triangulation of a sphere, hence the quartics coming from orientations are in the ideal $I_{n,k,t}$ also after increasing the dimension of the octahedra and varying $n,k,t$ accordingly. We can show the following.

\begin{theorem}\label{t:topology}
    Let $k \geq 3$ and let $\Delta = \tuple{\sigma_1, \dots, \sigma_s}$ be a balanced orientable normal $(k-1)$--pseudomanifold without boundary on $n$ vertices,  and orientation $\varepsilon = \varepsilon_1 \sigma_1 + \dots + \varepsilon_s \sigma_s$. Then 
    \begin{equation} \label{eq: binomial from orientation}
         \prod_{\substack{i \in [n] \\ \varepsilon_i = 1}} c_{\sigma_i} - \prod_{\substack{j \in [n] \\ \varepsilon_j = -1}} c_{\sigma_j} \in I_{n,k,k-1}.
    \end{equation}
    Moreover, the binomials above are in the Graver basis of $I_{n, k, k - 1}$.
\end{theorem}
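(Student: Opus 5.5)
Membership in $I_{n,k,k-1}$ comes from the boundary map, and the Graver property from minimality of pseudomanifold cycles.

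\emph{Membership.} The vector $\varepsilon=(\varepsilon_\kappa)_{\kappa\in\binom{[n]}{k}}$, extended by zero outside the facets of $\Delta$, will be shown to lie in $\ker_\Z(\mathcal{A}_{n,k,k-1})$. By \cref{prop:isomorphism} it suffices that for every $(k-1)$-subset $X$ the sum of $\varepsilon_i$ over facets $\sigma_i\supseteq X$ vanishes. If $X$ is not a ridge of $\Delta$, the sum is empty. Otherwise $X$ lies in exactly two facets $\sigma_i,\sigma_j$, since $\Delta$ is a pseudomanifold without boundary. These are adjacent in the facet-ridge graph $G(\Delta)$.

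Order the vertices as in \cref{ex:bipartiteoctahedron}, by color class $V_1,\dots,V_k$. The coefficient of $X$ in $\partial\sigma_i$ is then $(-1)^{c}$, where $c$ is the position of the color missing from $X$. This sign is the same for $\sigma_i$ and $\sigma_j$, because both are obtained from $X$ by adding a vertex of the same color. The cycle condition $\partial\varepsilon=0$, which holds since $\varepsilon$ generates $\tilde H_{k-1}(\Delta;\Z)$, gives $(-1)^c(\varepsilon_i+\varepsilon_j)=0$. Hence $\varepsilon_i=-\varepsilon_j$ and the row sum vanishes.

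Consequently the positive and negative parts of $\varepsilon$ have the same image under $\mathcal{A}_{n,k,k-1}$, and the binomial \eqref{eq: binomial from orientation} lies in $I_{n,k,k-1}$. \cref{l:bipartitedual} agrees with this picture: the orientation signs are a proper $2$-coloring of $G(\Delta)$, with all $\varepsilon_i=\pm1$.

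\emph{Primitivity.} Suppose $v=v_+-v_-\in\ker_\Z(\mathcal{A}_{n,k,k-1})$ is nonzero with $v_+\le\varepsilon_+$ and $v_-\le\varepsilon_-$. Then $v$ has entries in $\{0,\pm1\}$, its support lies among the facets of $\Delta$, and its signs agree with those of $\varepsilon$. Take a facet $\sigma_i$ in the support and a ridge $X\subset\sigma_i$, with $\sigma_j$ the other facet containing $X$. The row of $X$ involves only $\sigma_i$ and $\sigma_j$, so $v_i+v_j=0$ and therefore $v_j=-v_i\neq0$.

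Thus the support of $v$ is closed under adjacency in $G(\Delta)$. By strong connectivity, $v$ is supported on all facets, and then $v=\varepsilon$. So $\varepsilon$ is conformally minimal, and the binomial is primitive, i.e.\ it belongs to the Graver basis.

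\emph{Main obstacle.} The delicate step is the sign bookkeeping in the membership part: the signs in $\partial$ must depend only on the missing color, which requires the balanced coloring to give a consistent vertex order. One must also ensure the orientation class has $\pm1$ entries, which holds for pseudomanifolds since ridges lie in two facets and $\tilde H_{k-1}=\Z$.

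The hypotheses of normality and $k\ge3$ are not needed in the argument above. They enter only through \cref{l:bipartitedual}, which is used here just as a consistency check.
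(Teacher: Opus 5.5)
Your proof is correct and follows essentially the paper's route. The paper also orders vertices by color class, so that each row of the top boundary map is single-signed and the orientation cycle lies in $\ker_\Z(\mathcal{A}_{n,k,k-1})$. For primitivity, the paper argues that a conformally smaller kernel vector would be another nonzero cycle, hence a multiple of $\varepsilon$ because $\tilde H_{k-1}(\Delta;\Z)=\Z$; your propagation along the facet-ridge graph using strong connectivity proves the same rank-one fact directly. Your remark that normality and $k\geq 3$ play no role is also right: the bipartiteness from \cref{l:bipartitedual} is never actually used in the paper's argument either.
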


\begin{proof}
    By~\cref{l:bipartitedual} the facet-ridge graph of $\Delta$ is bipartite. Since $\Delta$ is a pseudomanifold without boundary, we know every ridge is contained in exactly $2$ facets. Given a partition $V_1 \sqcup \dots \sqcup V_k$ of the vertices of $\Delta$, order the vertices such that $u < v$ whenever $u \in V_i$ and $v \in V_j$ where $i < j$. Under this ordering, the sign of a nonzero entry corresponding to a facet $F$ and a ridge $\sigma \subset F$ in the top boundary map of $\Delta$ depend only on the parity of $i$ where $u = F \setminus \sigma$ and $u \in V_i$. This implies we may multiply the rows of the top boundary map $\partial$ of $\Delta$ to remove negative signs and obtain a submatrix of $\A_{n,k,k-1}$. More specifically, there exists an invertible diagonal matrix $D$ such that $D\partial$ is a submatrix of $\A_{n,k,k-1}$. Since $\Delta$ is orientable, the map $\partial$ has a kernel generated by $\varepsilon$. Hence $D\partial$ also has a $1$-dimensional kernel generated by $\varepsilon$. The first part of the  statement then follows. 
    
    For the final part, assume for the sake of contradiction that the binomial is not primitive. Then there exists $\varepsilon'$ and another binomial 
    $$
        m' = \prod_{\substack{i \in [n] \\ \varepsilon_i' = 1}} c_{\sigma_i} - \prod_{\substack{j \in [n] \\ \varepsilon_j' = -1}} c_{\sigma_j} \in I_{n,k,k-1},
        $$
        such that $\varepsilon'_+ < \varepsilon_+$ and $\varepsilon'_- < \varepsilon_-$.
        As $\Delta$ is orientable, we know $\tilde H_{k-1}(\Delta; \Z) = \Z$, and since by the arguments in the first part $\varepsilon'$ is also a nonzero element in $\tilde H_{k-1}(\Delta; \Z)$, we conclude $\varepsilon'$ is a constant multiple of $\varepsilon$, that is a contradiction.
\end{proof}


An interesting consequence of~\cref{t:topology} is that we can use results from the theory of triangulations of balanced pseudomanifolds in order to find new elements of $I_{n, k, k-1}$ by applying certain operations on binomials in $I_{n', k, k - 1}$, where $n' < n$.

\begin{example}
    In~\cite{IKN2017}, Izmestiev, Klee and Novik study the balanced versions of bistellar flips, which they call \emph{cross-flips}. We describe a new type of binomial in the Graver basis of $I_{9, 3, 2}$ based on applying one of their cross-flips to the octahedral quartics of $I_{6,3,2}$.
    
    Let $m =  c_{136}c_{246}c_{145}c_{235}  - c_{146}c_{236}c_{245}c_{135}$ be the octahedral quartic from~\eqref{eq: octahedral quartics}. Applying~\cref{t:topology} and the first move from~\cite[Fig. 1]{IKN2017} to the facet $\{1,4,5\}$ of the  boundary of the octahedron from~\Cref{figure:octahedron}, we get the primitive binomial 
    \[
        c_{146} c_{236} c_{135} c_{245} c_{678} c_{179} c_{389} - c_{789} c_{167} c_{368} c_{139} c_{246} c_{145} c_{235} \in I_{9, 3,2}.
    \]
    \begin{figure}[!h]
        \centering
        \begin{tikzpicture}[scale=1.8]
            \node[draw,circle,fill=black,scale=0.2,label={[label distance=-1pt]210:$1$}] at (xyz spherical cs:radius=1.41,latitude=135,longitude=90) (1) {$1$};
            \node[draw,circle,fill=black,scale=0.2,label={[label distance=-1pt]0:$2$}] at (xyz spherical cs:radius=1.41,latitude=-45,longitude=90) (3) {$3$};
            \node[draw,circle,fill=black,scale=0.2,label={[label distance=-5pt]-30:$4$}] at (xyz spherical cs:radius=1.41,latitude=45,longitude=90) (4) {$4$};
            \node[draw,circle,fill=black,scale=0.2,label={[label distance=-1pt]150:$5$}] at (xyz spherical cs:radius=1.82) (5) {$5$};
            \node[draw,circle,fill=black,scale=0.2,label={[label distance=-1pt]210:$6$}] at (xyz spherical cs:radius=1.82,latitude=180) (6) {$6$};
            
            \draw[very thick] (3) -- (4) -- (1);
            \draw[very thick] (1) -- (5);
            \draw[very thick] (3) -- (5) -- (4);
            \draw[very thick] (1) -- (6);
            \draw[very thick] (3) -- (6) -- (4);

            \node[draw,circle,fill=black,scale=0.2] at (xyz spherical cs:radius=0.88,latitude=-26,longitude=-58) (cf1) {$1$};
            \node[draw,circle,fill=black,scale=0.2] at (xyz spherical cs:radius=0.88,latitude=32,longitude=8) (cf2) {$2$};
            \node[draw,circle,fill=black,scale=0.2] at (xyz spherical cs:radius=0.88,latitude=60,longitude=0) (cf3) {$3$};

            \draw[very thick] (cf1) -- (cf2) -- (cf3) -- (cf1);
            \draw[very thick] (cf1) -- (1) -- (cf3);
            \draw[very thick] (cf1) -- (5) -- (cf2);
            \draw[very thick] (cf2) -- (4) -- (cf3);

            \begin{scope}[on background layer]
                \path[fill=blue,opacity=0.25] (3.center) to (4.center) to (5.center) to (3.center);
                \path[fill=red,opacity=0.25] (3.center) to (4.center) to (6.center) to (3.center);
                \path[fill=blue,opacity=0.25] (4.center) to (1.center) to (6.center) to (4.center);
                \path[fill=red,opacity=0.25] (cf1.center) to (cf2.center) to (cf3.center) to (cf1.center);
                \path[fill=red,opacity=0.25] (cf2.center) to (4.center) to (5.center) to (cf2.center);
                \path[fill=red,opacity=0.25] (cf1.center) to (1.center) to (5.center) to (cf1.center);
                \path[fill=red,opacity=0.25] (cf3.center) to (4.center) to (1.center) to (cf3.center);
                \path[fill=blue,opacity=0.25] (cf3.center) to (1.center) to (cf1.center) to (cf3.center);
                \path[fill=blue,opacity=0.25] (cf3.center) to (4.center) to (cf2.center) to (cf3.center);
                \path[fill=blue,opacity=0.25] (cf1.center) to (cf2.center) to (5.center) to (cf1.center);
            \end{scope}
        \end{tikzpicture}
        \caption{Triangulation of a $2$-sphere obtained by applying the first cross-flip from~\cite[Fig 1]{IKN2017} to the facet $\{1,4,5\}$ of the complex from~\Cref{figure:octahedron}.}
    \end{figure}
    Octahedral quartics obtained by different labelings induce different primitive binomials in~$I_{9,3,2}.$
\end{example}

\begin{remark}[{\bf Principal symmetric ideals}]
    In~\cite{HSS2025}, Harada, Seceleanu and \c{S}ega introduced a class of symmetric ideals called~\emph{principal symmetric ideals}, which are ideals of the form $(\sigma \cdot f \st \sigma \in \mathfrak{S}_n) \subseteq R = \K[x_1,\dots, x_n]$, where $\mathfrak{S}_n$ is the symmetric group on $n$ elements, and the action of $\mathfrak{S}_n$ on $R$ is given by permuting variables. We note that the ideal $J_{n,k,t}$ is not principal symmetric in the sense of~\cite{HSS2025}, since it is inside a polynomial ring on $\binom{n}{k}$ variables, and it is not invariant under the natural action of the symmetric group on $\binom{n}{k}$ elements. There is, however, a natural action of the symmetric group on $n$ elements on the ring of $J_{n,k,t}$ that sends $x_A$ to $x_{\sigma A}$, where $\sigma A = \{\sigma(i) \st i \in A\}$. Under this action, the ideal $J_{n,k,t}$ is easily seen to be principal and symmetric, as it is generated by a single binomial $m$ and other binomials of the form $\sigma \cdot m$.  
\end{remark}

\section{Three point functions} \label{sec: three point functions}
    In this final section, we return to our original motivation and focus on studying the algebraic relations among three-point functions. In our setup, we consider $n$ massless particles with momenta vectors $p_1,\dots,p_n$ in $\R^d$. We denote by $p_{i,j}$ the pairwise products $p_i\cdot p_j$ with respect to the Lorentzian inner product and collect them in an $n\times n$ symmetric matrix $P=(p_{i,j})$, that has zeros on the diagonal. We focus on the following simplified form of three point functions
\begin{equation*}
    C_{i,j,k}:=\frac{1}{p_{i,j}p_{j,k}p_{i,k}}\text{ for }1\leq i< j<k\leq n\,.
\end{equation*}
    The algebraic relations among these functions are the same as the relations among their reciprocals $c_{i,j,k}=p_{i,j}p_{j,k}p_{ik}$, generating the ideal $\tilde I_{n,d}$ of $\C[c_{i,j,k}:1\leq i< j<k\leq n]$.
    If the dimension of the ambient space $d$ is at least the number of particles $n$, there are no algebraic relations among the $p_{i,j}$'s, that we treat as independent variables. Then the ideal $\tilde I_{n,d}$ coincides with the incidence toric ideal $I_{n,3,2}$ in \Cref{def:inctorideal}. In general, we have the following result.
\begin{theorem}\cite[Theorem 2.5]{el2024gram}\label{thm:algebraicrelations}
    The algebraic relations among the $p_{i,j}$'s are given by the $(d+1)$-minors of $P$. 
\end{theorem}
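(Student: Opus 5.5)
We view the statement as a linear section of the classical second fundamental theorem for the orthogonal group, and we would prove it by showing that this section is a prime ideal.

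Over $\C$ the Lorentzian form is equivalent to the standard symmetric bilinear form, so we may work with $\mathrm{O}(d)$ and the usual dot product. By the first and second fundamental theorems of invariant theory for $\mathrm{O}(d)$ (Weyl, \cite[Section~12]{weyl1946classical}), the map $(p_1,\dots,p_n)\mapsto (p_i\cdot p_j)_{i\le j}$ has as image closure the variety $S_{\le d}$ of symmetric $n\times n$ matrices of rank at most $d$. Its ideal $I_{d+1}(Y)$ is generated by the $(d+1)$-minors of the generic symmetric matrix $Y=(y_{i,j})$, and $\C[Y]/I_{d+1}(Y)$ is a Cohen--Macaulay normal domain of dimension $nd-\binom{d}{2}$.

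The massless condition restricts us to configurations with all $p_i$ isotropic. Over $\C$ every symmetric matrix of rank $\le d$ is a Gram matrix of $n$ vectors in $\C^d$. Hence the image closure of the massless configurations is exactly
\[
S_{\le d}\cap \{y_{1,1}=\dots=y_{n,n}=0\}.
\]
Its ideal is the radical of
\[
I_{d+1}(Y)+(y_{1,1},\dots,y_{n,n}),
\]
and modulo the diagonal variables this is precisely the ideal of $(d+1)$-minors of $P$. The theorem therefore reduces to showing that
\[
J=I_{d+1}(Y)+(y_{1,1},\dots,y_{n,n})
\]
is prime. The case $n\le d$ is trivial, since there are no minors and the $p_{i,j}$ are independent by \cite[Theorem~2.5]{rajan2024kinematic}, so we assume $n>d$.

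The plan for primeness has three steps.

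\begin{enumerate}
\item \emph{Dimension.} Compute $\dim V(J)=nd-\binom{d}{2}-n$. One way is to note that the isotropic configurations form the product of $n$ quadric cones, of dimension $n(d-1)$, and that the generic fibre of the Gram map on rank-$d$ configurations is an $\mathrm{O}(d)$-orbit of dimension $\binom{d}{2}$. Since $\C[Y]/I_{d+1}(Y)$ is Cohen--Macaulay, the drop of exactly $n$ in dimension shows that the diagonal variables form a regular sequence, so $\C[Y]/J$ is Cohen--Macaulay.
\item \emph{Irreducibility.} For $d\ge 3$ the product of quadric cones is irreducible, so its image $V(J)$ is irreducible. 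Low $d$ would be checked by hand.
\item \emph{Reducedness.} By Serre's criterion it suffices that $\C[Y]/J$ is generically reduced (condition $R_0$), since Cohen--Macaulay gives $S_1$. For this we would exhibit a smooth point: take a generic zero-diagonal matrix $P_0$ of rank exactly $d$ and check that the Jacobian of the $(d+1)$-minors, together with the diagonal linear forms, has rank equal to the codimension there. Equivalently, the tangent space to $S_{\le d}$ at $P_0$, namely $\{XP_0+P_0X^{T}\}$ restricted appropriately, is transverse to the coordinate subspace $\{\mathrm{diag}=0\}$. Concretely, if $P_0=Q^{T}Q$ with $Q$ a $d\times n$ matrix with isotropic columns $q_i$, the diagonal entries of the tangent vectors $2\,q_i\cdot v_i$ can be prescribed independently because each $q_i\neq 0$.
\end{enumerate}

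Combining the three steps, $J$ is prime and equals the ideal of the image, which is the statement.

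The main obstacle is the regular-sequence/dimension step. One must make sure that imposing the zero diagonal does not create extra components: no component of $S_{\le d}\cap\{\mathrm{diag}=0\}$ may have excess dimension, for instance components coming from matrices of smaller rank. We would first try to control these by a stratification by rank, bounding the dimension of the rank-$r$ zero-diagonal stratum by $nr-\binom{r}{2}-n$ (or by the dimension of the corresponding configurations, when fewer than $n$ constraints are independent), and checking that it stays below $nd-\binom{d}{2}-n$ for $r<d$.
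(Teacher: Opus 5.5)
The paper gives no proof of this statement; it only quotes it from \cite{el2024gram}. Your proposal is therefore an independent, self-contained route, and its core is sound. The second fundamental theorem for $\mathrm{O}(d)$ and the Cohen--Macaulayness of symmetric determinantal rings make $\C[Y]/I_{d+1}(Y)$ a Cohen--Macaulay domain of dimension $nd-\binom{d}{2}$. The Gram map from the product $X$ of isotropic cones surjects onto $S_{\le d}\cap\{\mathrm{diag}=0\}$. For $d\ge 3$, $X$ is irreducible, so the fibre count gives $\dim V(J)=nd-\binom{d}{2}-n$, and hence $y_{1,1},\dots,y_{n,n}$ is a regular sequence. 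Your transversality computation $\bigl(Q^{T}V+V^{T}Q\bigr)_{ii}=2\,q_i\cdot v_i$ at a rank-$d$ point correctly gives $R_0$, so Serre's criterion applies. Compared with the bare citation, this gives extra structure. $\C[p_{i,j}]/I_{d+1}(P)$ is Cohen--Macaulay, and as a quotient of the generic symmetric determinantal ring by a regular sequence of linear forms, it has the same $h$-polynomial and degree as the variety of symmetric $n\times n$ matrices of rank at most $d$.

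Three points need adjusting.

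First, the ``main obstacle'' in your last paragraph is not an obstacle. You already showed that $V(J)$ is, as a set, the image of $X$. So for $d\ge 3$ it is irreducible and its dimension is exactly the fibre count. Lower-rank matrices cannot form extra components, and no stratification is needed.

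Second, the theorem only requires $J$ to be radical, and primeness is false for $d=2$. The isotropic cone in $\C^2$ is two lines; for instance, for $n=3$ one has $\det P=2p_{1,2}p_{1,3}p_{2,3}$. For $d=2$, $V(J)$ is a union indexed by bipartitions $[n]=A\sqcup B$ with $A,B\neq\emptyset$. Each piece is the $(n-1)$-dimensional variety of matrices supported on the $A\times B$ block with that block of rank one. Your count still gives $\dim V(J)=n-1$, hence Cohen--Macaulayness. Your transversality argument at a generic point of each component (rank $2$, all $q_i\neq 0$) gives $R_0$, so $J$ is radical, which is all that is needed.

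Third, for $d=1$ the statement as quoted fails. The only isotropic vector in $\C^1$ is $0$, so every $p_{i,j}$ is a relation. The $2$-minors, however, generate an ideal containing $-p_{i,j}^2$ but no linear form. So a hypothesis $d\ge 2$ is implicit. This is exactly where your step 3 breaks down, since no zero-diagonal symmetric matrix has rank exactly $1$.
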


\noindent
    The ideals $\tilde I_{n,d}$ form a stabilizing descending chain
\begin{equation*}
    \tilde I_{n,1}\supseteq \tilde I_{n,2}\supseteq\cdots \supseteq \tilde I_{n,n}=\tilde I_{n,n+1}=\cdots = I_{n,3,2}.
\end{equation*}
    The ideal $\tilde I_{n,n}$ has been widely discussed in previous sections. Here we focus on the smallest non-toric case, which is $\tilde I_{n,n-1}$. This is obtained by adding the determinant of the symmetric matrix $P$ to $\tilde I_{n,n}$. However, the determinant is an element of $\C[p_{i,j}]$, and it may not be contained in the subring $\C[c_{i,j,k}]$. Therefore, we must determine to what extent $\det(P)$ can be expressed in terms of $c_{i,j,k}$'s. To be more precise, our goal is to answer the following question.
\begin{question}\label{question:main}
    Consider the symmetric matrix $P$ with zero on the diagonal. 
    Does there exist an element $q\in \mathbb{K}[p_{i,j}][x]$ such that $q(\det(P_n))\in \mathbb{K}[c_{i,j,k}]$?
\end{question}

We denote by $(G_n,\cdot)$ the free abelian group generated by the $p_{i,j}$'s and by $C_n$ the subgroup of $G_n$ generated by the $c_{i,j,k}$'s.
Let $H_n\subseteq S_n$ be the set of derangements of $n$ elements, i.e., all the permutations in $S_n$ with no fixed points. We consider the map
\[ \varphi_n:H_n\rightarrow G_n, \quad \varphi_n(\sigma)=p_{1,\sigma(1)}p_{2,\sigma(2)}\dots p_{n,\sigma(n)}. \]
We simply write $\varphi$ when the domain is clear from the context.
\begin{proposition}
    The map $\varphi$ is not injective. In particular, we have $\lvert\varphi^{-1}(\varphi(\sigma))\rvert=2^{t(\sigma)-s(\sigma)}$, where $t(\sigma)$ is the number of cycles appearing in the cycle decomposition of $\sigma$ and $s(\sigma)$ is the number of transpositions in such decomposition.

\end{proposition}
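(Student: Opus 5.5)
The proof reads the monomial $\varphi(\sigma)$ as an undirected multigraph and counts how many derangements give the same multigraph. Since $p_{i,j}=p_{j,i}$ and the diagonal is zero, $\varphi(\sigma)=\prod_{i}p_{\{i,\sigma(i)\}}$ records exactly the multiset of unordered pairs $\{i,\sigma(i)\}$, $i\in[n]$. So $\varphi(\sigma)$ determines the multigraph $\Gamma_\sigma$ on $[n]$ whose edges are these pairs, counted with multiplicity.

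The first step is to describe $\Gamma_\sigma$ from the cycle decomposition of $\sigma$. A cycle $(a_1\,a_2\,\cdots\,a_m)$ with $m\geq 3$ contributes the $m$ distinct edges $\{a_1,a_2\},\dots,\{a_{m-1},a_m\},\{a_m,a_1\}$, which form a simple cycle of length $m$ in $\Gamma_\sigma$. A transposition $(a\,b)$ contributes the pair $\{a,b\}$ twice, giving a double edge. Since the cycles of $\sigma$ have disjoint supports and $\sigma$ has no fixed points, $\Gamma_\sigma$ is a disjoint union of these pieces, and every vertex has degree $2$.

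The second step, the core of the argument, is the converse: for which derangements $\tau$ is $\Gamma_\tau=\Gamma_\sigma$? Because every vertex has degree $2$, the connected components of $\Gamma_\sigma$ are exactly the double edges and the simple cycles of length at least $3$. These components are recoverable from the multigraph, so $\tau$ must have the same cycle supports as $\sigma$. On a double-edge component $\{a,b\}$, the only derangement of $\{a,b\}$ is the transposition, so there is one choice. On a simple cycle component of length $m\geq 3$, $\tau$ restricted to the support must be an $m$-cycle whose consecutive pairs are exactly the edges of that graph cycle. Such an $m$-cycle is the same as a cyclic orientation of the graph cycle, and there are exactly two: $\sigma$ and $\sigma^{-1}$ on that support. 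The choices on different components are independent. Hence $|\varphi^{-1}(\varphi(\sigma))|=2^{t(\sigma)-s(\sigma)}$, where $t(\sigma)-s(\sigma)$ is the number of cycles of length at least $3$.

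The step that needs the most care is the second one: one must justify that a vertex-degree-$2$ multigraph determines its component structure uniquely, and that the only Hamiltonian cyclic orders of a simple $m$-cycle graph are its two orientations. Both facts are elementary, but the case $m=2$ has to be separated explicitly, because there the two orientations coincide. This is exactly why transpositions do not contribute a factor $2$.

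Non-injectivity follows for $n\geq 3$ by taking any derangement with a cycle of length at least $3$, for instance the full $n$-cycle, which gives a fiber of size at least $2$. For $n=2$ the only derangement is the transposition $(1\,2)$, so the formula gives $2^{0}=1$ and the map is injective. The non-injectivity claim is therefore to be read for $n\geq 3$, which is the range of interest.
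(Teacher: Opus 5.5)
Your proof is correct and follows essentially the same route as the paper. The paper shows by a domino argument that the fiber of an $n$-cycle is $\{\sigma,\sigma^{-1}\}$, then takes products over the disjoint cycles and uses that transpositions are self-inverse, which matches your per-component orientation count. Your multigraph formulation also justifies a step the paper leaves implicit: any $\tau$ in the fiber must have the same cycle supports as $\sigma$, since these supports are the connected components of the $2$-regular multigraph. Your caveat that non-injectivity needs $n\geq 3$ is also accurate.
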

\begin{proof}
    Any $n$-cycle $\sigma$ and its inverse have the same image. It is enough to show it for $\sigma=(1 \ 2 \ \cdots \  n )$. 
   If $\varphi(\tau)=\varphi(\sigma)=p_{1,2}p_{2,3}\cdots p_{n,1}$ we have $\tau(1)=2$ or $\tau(2)=1$, as $p_{1,2}=p_{2,1}$. The first case, as a domino effect, implies $\tau(2)=3 ,\dots, \tau(n)=1$, that is, $\tau =\sigma$. Similarly, in the second case $\tau=\sigma^{-1}$.
   Since $\varphi$ commutes with respect to the product of disjoint cycles $\sigma_1,\dots, \sigma_t$ we get:
   \[ \varphi^{-1}(\varphi(\sigma_1\cdots\sigma_t)) = \{ (\sigma_1\cdots \sigma_t)^\varepsilon: \varepsilon\in \{\pm 1\}^t \}.
   \]
   Considering that a $2$-cycle is self-inverse, we conclude that $\lvert \varphi^{-1}(\varphi(\sigma))\rvert = 2^{t(\sigma)- s(\sigma)}$.
\end{proof}
    Since the sum in the Leibniz expansion of $\det(P_n)$ runs only over $H_n$, a strategy to answer Question~\ref{question:main} is to understand for which $n\in\mathbb{N}$ the image of $\varphi$ is in $C_n$. We prove it for $n=3k$ and use this to attack the remaining cases. The following lemma is a direct computation.
\begin{lemma} \label{lemma: transposition 1&2}
    For distinct indices $i,j,k,s,t\in [n]$ we have the following relations

        $$
            p_{k,i} p_{j,s} = p_{k,j} p_{i,s} \frac{c_{k,i,t}c_{j,s,t}}{c_{k,j,t} c_{i,s,t}} \qand p_{i,j} p_{i,j} =p_{k,s} p_{k,s} \frac{c_{i,j,k} c_{i,j,s}}{c_{i,k,s} c_{j,k,s}}.
        $$
\end{lemma}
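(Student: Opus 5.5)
Both identities follow by substituting $c_{a,b,c}=p_{a,b}\,p_{b,c}\,p_{a,c}$, where $p_{a,b}=p_{b,a}$, and cancelling in the field of fractions $\K(p_{i,j})$. This is legitimate because every $p_{i,j}$ with $i\neq j$ is a nonzero element there, so all the $c$'s in the denominators are invertible. The indices are assumed distinct, so no diagonal entry $p_{i,i}=0$ appears, and every $c$ in the statement is a genuine variable $c_\kappa$ with $\kappa\in\binom{[n]}{3}$, read up to reordering of its indices.

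For the first identity, I expand the correction factor:
\[
\frac{c_{k,i,t}\,c_{j,s,t}}{c_{k,j,t}\,c_{i,s,t}}=\frac{p_{k,i}p_{i,t}p_{k,t}\cdot p_{j,s}p_{s,t}p_{j,t}}{p_{k,j}p_{j,t}p_{k,t}\cdot p_{i,s}p_{s,t}p_{i,t}}.
\]
The factors $p_{k,t}$, $p_{i,t}$, $p_{j,t}$ and $p_{s,t}$ each occur once in the numerator and once in the denominator, so they cancel. What remains is $\frac{p_{k,i}\,p_{j,s}}{p_{k,j}\,p_{i,s}}$, and multiplying by $p_{k,j}\,p_{i,s}$ gives $p_{k,i}\,p_{j,s}$, as claimed. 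The auxiliary index $t$ serves as a common apex through which the pairs are exchanged; this is why five distinct indices are needed.

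For the second identity, I expand
\[
\frac{c_{i,j,k}\,c_{i,j,s}}{c_{i,k,s}\,c_{j,k,s}}=\frac{p_{i,j}p_{j,k}p_{i,k}\cdot p_{i,j}p_{j,s}p_{i,s}}{p_{i,k}p_{k,s}p_{i,s}\cdot p_{j,k}p_{k,s}p_{j,s}}.
\]
Here $p_{i,k}$, $p_{j,k}$, $p_{i,s}$ and $p_{j,s}$ cancel, leaving $p_{i,j}^2/p_{k,s}^2$. Multiplying by $p_{k,s}^2$ gives $p_{i,j}^2$. Only four indices are used, so the hypothesis on $t$ is unnecessary here.

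There is no real obstacle; the argument is pure bookkeeping, and the one care point is that every pair $p_{a,b}$ is matched after symmetrization. In the subsequent argument these relations will be read in the group $G_n$: they say that swapping the partners of two factors of a monomial $\varphi(\sigma)$, or replacing a squared factor $p_{i,j}^2$ by another square $p_{k,s}^2$, changes it only by an element of $C_n$.
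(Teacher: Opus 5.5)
Your proof is correct and is exactly the direct computation the paper has in mind (the paper gives no further argument than ``direct computation''). Substituting $c_{a,b,c}=p_{a,b}p_{b,c}p_{a,c}$ and cancelling the factors through the apex $t$, respectively the four cross terms, is all that is needed. Your remark that the identities are really statements in the free abelian group $G_n$ also matches how the paper uses them afterwards.
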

    As next step, we show that images of derangements belong to the same left coset of $C_n$ in $G_n$.
    

\begin{proposition}\label{proposition: n-1cycle and transitivity}
    If $n\geq 5$, for every derangement $\sigma\in H_n$ with $t(\sigma)\geq 2$ there exists $\bar\sigma\in H_n$ with $t(\bar\sigma)=t(\sigma)-1$ such that $\varphi(\sigma)\in\varphi(\bar\sigma)C_n$. In particular, for any $\sigma_1,\sigma_2\in H_n$ we have \[\varphi(\sigma_1)\in\varphi(\sigma_2)C_n.\] 
\end{proposition}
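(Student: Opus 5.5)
The plan is to realise the merging of two cycles of $\sigma$ as a single application of the first identity of \cref{lemma: transposition 1&2}. That identity replaces two factors of $\varphi(\sigma)$ by two others, at the cost of a factor in $C_n$.

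\textbf{Merging step.} Let $\sigma\in H_n$ with $t(\sigma)\geq 2$. Choose two distinct cycles $A$ and $B$ of $\sigma$, pick $k\in A$ with $\sigma(k)=i$, and pick $j\in B$ with $\sigma(j)=s$. Since $\sigma$ is a derangement and $A\cap B=\emptyset$, the indices $k,i,j,s$ are pairwise distinct. This holds even when $A$ or $B$ is a transposition, in which case $\sigma(i)=k$, which does no harm. Because $n\geq 5$, there is a fifth index $t\notin\{i,j,k,s\}$. Applying the first relation of \cref{lemma: transposition 1&2} with the roles of $j$ and $s$ exchanged gives
\[
p_{k,i}\,p_{j,s}=p_{k,s}\,p_{j,i}\,\frac{c_{k,i,t}\,c_{s,j,t}}{c_{k,s,t}\,c_{i,j,t}},
\]
using the symmetry $p_{j,s}=p_{s,j}$ and $p_{i,j}=p_{j,i}$. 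Define $\bar\sigma$ by $\bar\sigma(k)=s$, $\bar\sigma(j)=i$, and $\bar\sigma=\sigma$ elsewhere. Then $\bar\sigma$ is a derangement, since $s\neq k$ and $i\neq j$. Moreover $\varphi(\sigma)=\varphi(\bar\sigma)\cdot c$ with $c\in C_n$. Finally, $\bar\sigma=\sigma\circ(k\ j)$ joins $A$ and $B$ into one cycle $k\to s\to\cdots\to j\to i\to\cdots\to k$ and leaves the other cycles unchanged, so $t(\bar\sigma)=t(\sigma)-1$. This proves the first assertion.

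\textbf{Reduction to $n$-cycles.} Because $C_n$ is a subgroup, the relation $\varphi(\sigma_1)\in\varphi(\sigma_2)C_n$ is an equivalence relation on $H_n$. Iterating the merging step shows that every derangement is equivalent to some $n$-cycle. It therefore remains to show that all $n$-cycles are equivalent to each other.

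\textbf{Connecting $n$-cycles.} This is the step I expect to be the main obstacle. The merging step can also be read backwards as a splitting move. For an $n$-cycle $\sigma$ and two positions $k,j$, the permutation $\sigma\circ(k\ j)$ splits $\sigma$ into two cycles. When both pieces have length at least $2$, the same identity (with $C_n$ a group) shows that the split permutation is equivalent to $\sigma$. After a split, one can re-merge using a different pair of elements, one taken from each piece, and so land on a different $n$-cycle. Concretely, I would show that $(a_1\,a_2\,\cdots\,a_n)$ is equivalent to the cycle obtained by swapping two consecutive entries $a_r,a_{r+1}$. To do this, split off a cycle of length $2$ or $3$ containing $a_r,a_{r+1}$, using $n\geq 5$ to keep both pieces non-degenerate. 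Then re-merge at a different point so that the order of $a_r$ and $a_{r+1}$ is reversed, checking which pair $(k,j)$ realises this in a small explicit computation. Since adjacent swaps generate all orderings of $a_1,\dots,a_n$, every $n$-cycle is then equivalent to $(1\,2\,\cdots\,n)$, which gives the transitivity statement. If the adjacent-swap bookkeeping becomes awkward, a fallback is the second relation of \cref{lemma: transposition 1&2}: it lets one trade a squared factor $p_{i,j}^2$, coming from a transposition, for $p_{k,s}^2$. That gives an alternative route through derangements with a $2$-cycle.
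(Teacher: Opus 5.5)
Your merging step is the paper's argument: the paper composes on the left with $(i\ j)$ for $i,j$ in different cycles, which is your $\sigma\circ(k\ j)$ with $k=\sigma^{-1}(i)$. The paper disposes of the transitivity claim with a bare appeal to the lemma, and your reduction to $n$-cycles followed by adjacent swaps is a correct way to supply that step.

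The computation you deferred is one line. Swapping $a_r,a_{r+1}$ in $(a_1\ \cdots\ a_n)$ multiplies $\varphi$ by $p_{a_{r-1},a_{r+1}}\,p_{a_r,a_{r+2}}/(p_{a_{r-1},a_r}\,p_{a_{r+1},a_{r+2}})$, with indices mod $n$. This lies in $C_n$ by the first relation of the lemma with any fifth index. In your split-and-remerge language: split with $(a_{r-1}\ a_{r+1})$ to isolate the $2$-cycle $(a_r\ a_{r+1})$, then re-merge with $(a_{r-1}\ a_r)$.
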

\begin{proof}
     Since $t(\sigma)\geq 2$, there are distinct $i,j$ belonging to two different cycles in the decomposition of $\sigma$. Then, $t((i,j)\circ\sigma)=t(\sigma)-1$. By Lemma~\ref{lemma: transposition 1&2} for any $\ell\neq i$, $j$, $\sigma^{-1}(i)$, $\sigma^{-1}(j)$
     \[ \varphi((i,j)\circ\sigma)=\varphi(\sigma)\frac{c_{\sigma^{-1}(i),j,\ell} \ c_{\sigma^{-1}(j),i,\ell}}{c_{\sigma^{-1}(i),i,\ell} \ c_{j,\sigma^{-1}(j),\ell}}\in C_n,\]
    and we conclude that $\varphi(\sigma)\in\varphi(\bar\sigma)C_n$. The second part follows using Lemma~\ref{lemma: transposition 1&2}.
\end{proof}

\begin{proposition}\label{proposition:0=3}
    Let $n$ be an integer divisible by $3$ and let $\sigma\in H_n$. Then, $\varphi(\sigma)\in C_n$.
\end{proposition}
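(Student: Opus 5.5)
By \cref{proposition: n-1cycle and transitivity}, all images of derangements lie in a single coset of $C_n$ when $n\geq 5$. So it suffices to exhibit one derangement $\sigma_0\in H_n$ with $\varphi(\sigma_0)\in C_n$. The case $n=3$ is checked directly.

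For $n=3k$, partition $[n]$ into the consecutive triples $\{3m+1,3m+2,3m+3\}$ for $m=0,\dots,k-1$, and let $\sigma_0$ be the product of the $3$-cycles $(3m+1\ 3m+2\ 3m+3)$. Each such cycle contributes
\[ p_{3m+1,3m+2}\,p_{3m+2,3m+3}\,p_{3m+3,3m+1}=c_{3m+1,3m+2,3m+3} \]
to $\varphi(\sigma_0)$. Hence $\varphi(\sigma_0)=\prod_m c_{3m+1,3m+2,3m+3}\in C_n$, and this is a genuine monomial in the $c$'s.

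For $n=3$ the only derangements are the two $3$-cycles. Both map to $p_{12}p_{23}p_{13}=c_{123}$, so there is nothing further to prove.

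For $n=6$ and larger multiples of $3$ we have $n\geq 5$, and the coset statement of \cref{proposition: n-1cycle and transitivity} gives $\varphi(\sigma)\in\varphi(\sigma_0)C_n=C_n$ for every $\sigma\in H_n$.

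The only delicate point is justifying the transitivity part of \cref{proposition: n-1cycle and transitivity}, namely that any two $n$-cycles have images in the same coset. If that part is considered insufficiently detailed, I would prove it directly from \cref{lemma: transposition 1&2}. Adjacent transpositions act on $n$-cycles by swapping two consecutive entries of the cyclic word. Such a swap changes $\varphi$ by a ratio of the form $\frac{p_{k,i}p_{j,s}}{p_{k,j}p_{i,s}}$ times a similar factor. Each of these ratios lies in $C_n$ by the first relation of \cref{lemma: transposition 1&2}, which needs a fifth index $t$ and hence $n\geq 5$. Since adjacent swaps connect all $n$-cycles, all $n$-cycles lie in one coset, and the reduction in \cref{proposition: n-1cycle and transitivity} links every derangement to some $n$-cycle.
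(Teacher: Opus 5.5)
Your proof is correct and follows the paper's argument: you exhibit the product of disjoint $3$-cycles, whose image is a product of $c$'s, and then pass to every derangement via the coset statement of Proposition~\ref{proposition: n-1cycle and transitivity}. Your separate check of $n=3$ is a worthwhile addition, since that proposition is only stated for $n\geq 5$ and the paper's proof applies it without covering that case.
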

\begin{proof}
    Since $n$ is divisible by $3$, we have $\sigma=(1 \ 2 \ 3)(4 \ 5 \ 6)\cdots(n-2 \ n-1 \ n)\in H_n$. Clearly, $\varphi(\sigma)=c_{1,2,3}c_{4,5,6}\cdots c_{n-2,n-1,n}\in C_n$. We conclude by applying Proposition~\ref{proposition: n-1cycle and transitivity}.
\end{proof}

\begin{lemma}\label{lemma:product}
    Let $n>2$ be a positive odd integer such that $n\equiv 2\pmod 3$. Then,
    $$\prod_{1\leq i<j\leq n}p_{i,j}\in p_{1,3}p_{2,3}p_{2,4}p_{1,4}\cdot C_{n}.$$
\end{lemma}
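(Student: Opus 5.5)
The plan is to work in the quotient group $G_n/C_n$ and argue by induction on $n$ in steps of $6$, since the hypotheses on $n$ say exactly that $n\equiv 5 \pmod 6$. Write $E_n=\prod_{1\le i<j\le n}p_{i,j}$ and $Q=p_{1,3}p_{2,3}p_{2,4}p_{1,4}$, which is the $4$-cycle $1\!-\!3\!-\!2\!-\!4\!-\!1$. The claim is $E_n\equiv Q \pmod{C_n}$.

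\emph{Base case $n=5$.} Removing the four edges of $Q$ from $K_5$ leaves the edges $12,15,25,34,35,45$. These form exactly the two triangles $\{1,2,5\}$ and $\{3,4,5\}$, so
\[
E_5 = Q\, c_{1,2,5}\, c_{3,4,5} \in Q\,C_5 .
\]

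\emph{Inductive step $n\to n+6$.} Let $a,b,c',d,e,f$ be the six new vertices. We have $E_{n+6}=E_n\cdot S\cdot K$, where $S$ is the product of the $6n$ edges joining old vertices to new ones, and $K$ is the product of the $15$ edges among the new vertices. Since $C_n\subseteq C_{n+6}$, the induction hypothesis gives $E_n\in Q\,C_{n+6}$. It therefore suffices to show $SK\in C_{n+6}$.

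\emph{Handling $S$ and $K$.} Pair the new vertices as $\{a,b\},\{c',d\},\{e,f\}$ and set $M=p_{a,b}p_{c',d}p_{e,f}$. For each old vertex $v$, the star from $v$ to the new vertices equals $c_{v,a,b}\,c_{v,c',d}\,c_{v,e,f}\,M^{-1}$. Taking the product over all old $v$ gives $S\in M^{-n}C_{n+6}$, and hence
\[
SK \in (K M^{-1})\cdot M^{-(n-1)}\, C_{n+6}.
\]
The graph $K_6$ minus a perfect matching is the octahedron graph, whose $8$ triangular faces 2-colour. One colour class, four triangles, covers all $12$ edges, so $KM^{-1}\in C_{n+6}$.

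For $M^{-(n-1)}$: because $n$ is odd, $M^{n-1}=(p_{a,b}^2p_{c',d}^2p_{e,f}^2)^{(n-1)/2}$. The second relation of Lemma~\ref{lemma: transposition 1&2} says any two squares $p_{i,j}^2$ and $p_{k,s}^2$ agree modulo $C_{n+6}$ (there are plenty of auxiliary indices). So $M^{n-1}\equiv (p_{a,b}^2p_{a,c'}^2p_{b,c'}^2)^{(n-1)/2}=c_{a,b,c'}^{\,n-1}\in C_{n+6}$.

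This completes the induction, as long as the bookkeeping of exponents in the last step is right. I expect that to be the only delicate point: one must check that the inverses and the odd power of $M$ really collapse, using that $n$ odd forces $n-1$ even. If it failed, the fallback would be to quote the classical fact that $K_n$ minus a $4$-cycle admits a triangle decomposition for $n\equiv 5\pmod 6$ (maximum triangle packings). That fact gives the statement directly, as a product of $c_{i,j,k}$'s times $Q$.
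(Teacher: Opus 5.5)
Your proof is correct, and it takes a genuinely different route from the paper's. You were worried about the exponent bookkeeping in the last step, but it works: the base case, the star identity, the octahedral decomposition, and the reduction of $M^{n-1}$ to $c_{a,b,c'}^{\,n-1}$ all hold.

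The paper argues through invariants of the quotient $G_n/C_n$. Multiplying by $C_n$ preserves the length of a word modulo $3$ and the parity of each index's multiplicity. So a reduced word $q$ with $\prod_{i<j}p_{i,j}\in qC_n$ has $\mathrm{lt}(q)\equiv\binom{n}{2}\equiv 1\pmod 3$, and every index appears an even number of times (because $n$ is odd). The paper then splits $q$ into cycles $\varphi(\sigma_i)$ and shortens these to transpositions and $3$-cycles via Lemma~\ref{lemma: transposition 1&2}. It removes a $3$-cycle (built from three transpositions if needed), cutting the length by $3$ until it reaches $4$.

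You instead build the representation explicitly, by induction $n\to n+6$ from the base case $n=5$. The cross edges are absorbed by $\prod_{x\in\{a,b,c',d,e,f\}}p_{v,x}=c_{v,a,b}c_{v,c',d}c_{v,e,f}M^{-1}$. The edges of $K_6$ minus a perfect matching are covered by one colour class of the octahedron. The leftover $M^{-(n-1)}$ disappears because all squares $p_{i,j}^2$ are congruent modulo $C_{n+6}$, which is exactly where the oddness of $n$ enters your argument.

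Your approach buys complete explicitness: every step is a concrete identity you can check. The paper's sketch leaves implicit how longer cycles are shortened, and why the final length-$4$ word can be taken to be $p_{1,3}p_{2,3}p_{2,4}p_{1,4}$ rather than, say, $p_{1,2}^2p_{3,4}^2$. These two are congruent by applying the first relation of that lemma twice. The paper's approach buys generality: its invariant-plus-rewriting argument applies to any word with even index multiplicities, whereas your induction is tailored to the single product $\prod_{i<j}p_{i,j}$.

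Your fallback via maximum triangle packings is not needed. It is nonetheless valid, and it would even give a representation without inverses.
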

\begin{proof}
     Let $q$ be a reduced word in $G_n$ such that $\prod_{1\leq i<j\leq n}p_{i,j}\in qC_n$. Since $\binom{n}{2}\equiv 1\pmod{3}$, we have that the $\mathrm{lt}(q)\equiv 1\pmod{3}$, where $\mathrm{lt}(q)$ is the length of $q$. Since $n$ is odd, every index $i\in[n]$ appears an even number of times in $\prod_{1\leq i<j\leq n}p_{i,j}$, and therefore also in $q$. This implies that $\mathrm{lt}(q)>1$. Moreover, if $\mathrm{lt}(q)\geq 7$, since every index $i\in[n]$ appears an even number of times, there exist $\sigma_1,\dots,\sigma_{\ell}$ such that $q=\varphi(\sigma_1)\cdots\varphi(\sigma_{\ell})$.  Applying Lemma~\ref{lemma: transposition 1&2} multiple times gives that for each $i\in[\ell]$,  $\sigma_i$ is either a transposition or a $3$-cycle. We can assume that at least one of them is a $3$-cycle. Otherwise, we have at least $3$ transpositions appearing, and by applying Lemma~\ref{lemma: transposition 1&2} we can construct a $3$-cycle. So, without loss of generality, let $\sigma_1$ be a $3$-cycle. Then $q\in \varphi(\sigma_2)\cdots\varphi(\sigma_{\ell})C_n$ and $\mathrm{lt}(\varphi(\sigma_2)\cdots\varphi(\sigma_{\ell}))<\mathrm{lt}(q)$, and we conclude.
\end{proof}

\begin{proposition}\label{proposition:2=3}
    Let $n>2$ be an odd integer such that $n\equiv 2\pmod 3$ and let $\sigma\in H_n$. Then: \[\varphi(\sigma)\prod_{1\leq i<j\leq n}p_{i,j}\in C_n.\]
\end{proposition}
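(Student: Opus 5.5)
The strategy is to reduce to a single well-chosen derangement and then compute directly. Since $n$ is odd and $n\equiv 2 \pmod 3$, we have $n\equiv 5\pmod 6$, so $n\geq 5$. Hence \cref{proposition: n-1cycle and transitivity} applies: all the elements $\varphi(\sigma)$, $\sigma\in H_n$, lie in a single coset of $C_n$, so it suffices to prove the claim for one derangement $\sigma_0$. By \cref{lemma:product}, the product $\prod_{i<j}p_{i,j}$ lies in $q\,C_n$ with $q=p_{1,3}p_{2,3}p_{2,4}p_{1,4}$. The statement therefore reduces to finding one $\sigma_0\in H_n$ with $\varphi(\sigma_0)\,q\in C_n$.

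I would take
\[\sigma_0=(1\ 3\ 2\ 4\ 5)(6\ 7\ 8)\cdots(n-2\ \ n-1\ \ n).\]
This is a derangement because $n-5\equiv 0 \pmod 3$. The $3$-cycles contribute $c_{6,7,8}\cdots c_{n-2,n-1,n}\in C_n$, so they can be discarded. The $5$-cycle contributes $p_{1,3}p_{2,3}p_{2,4}p_{4,5}p_{1,5}$. Multiplying by $q$ gives
\[p_{1,3}^2\,p_{2,3}^2\,p_{2,4}^2\cdot p_{1,4}p_{4,5}p_{1,5}=c_{1,4,5}\cdot p_{1,3}^2p_{2,3}^2p_{2,4}^2 .\]
So it remains to show $p_{1,3}^2p_{2,3}^2p_{2,4}^2\in C_n$.

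The key step, and the one I expect to carry the real content, is the claim that all squares $p_{i,j}^2$ lie in a single coset of $C_n$. The second relation in \cref{lemma: transposition 1&2} gives $p_{i,j}^2\equiv p_{k,s}^2 \pmod{C_n}$ whenever $\{i,j\}\cap\{k,s\}=\emptyset$. To pass to arbitrary pairs, I would use that the graph on $2$-subsets of $[n]$ whose edges join disjoint pairs (the Kneser graph $K(n,2)$) is connected for $n\geq 5$. Concretely, two pairs sharing an element, say $\{i,j\}$ and $\{i,l\}$, are both disjoint from a pair $\{a,b\}$ avoiding $i,j,l$, and such a pair exists since $n\geq 5$. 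Consequently
\[p_{1,3}^2p_{2,3}^2p_{2,4}^2\equiv p_{1,2}^2p_{1,3}^2p_{2,3}^2=c_{1,2,3}^2\pmod{C_n},\]
which lies in $C_n$. This completes the argument.

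The only points to double-check are routine: that the indices in each application of \cref{lemma: transposition 1&2} are distinct, and that the degenerate case $n=5$ (with no $3$-cycles in $\sigma_0$) is covered by the same computation.
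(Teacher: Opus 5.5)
Your proof is correct, and its skeleton matches the paper's: both use the transitivity proposition to reduce to a single derangement, and both use Lemma~\ref{lemma:product} to replace $\prod_{i<j}p_{i,j}$ by $q=p_{1,3}p_{2,3}p_{2,4}p_{1,4}$. What differs is the choice of representative.

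The paper takes $(1\,2)(3\,\cdots\,n)$. Then $\varphi((1\,2))\,q=p_{1,2}^2p_{1,3}p_{2,3}p_{2,4}p_{1,4}=c_{1,2,3}c_{1,2,4}$ closes at once, and the $(n-2)$-cycle is absorbed by Proposition~\ref{proposition:0=3}.

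Your $5$-cycle leaves the residue $p_{1,3}^2p_{2,3}^2p_{2,4}^2$. This forces the extra observation that all squares $p_{i,j}^2$ are congruent modulo $C_n$. That step is valid, since the second identity of the transposition lemma needs only four distinct indices and $K(n,2)$ is connected for $n\ge 5$. It is also a pleasant fact in its own right, but it is avoidable here. In exchange, your use of explicit $3$-cycles spares you the appeal to Proposition~\ref{proposition:0=3}.

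You can get both advantages by taking $\sigma_0=(1\,2)(3\,4\,5)(6\,7\,8)\cdots(n-2\ \ n-1\ \ n)$. This is a derangement because $n-2\equiv 0\pmod 3$, and it gives directly $\varphi(\sigma_0)\,q=c_{1,2,3}\,c_{1,2,4}\,c_{3,4,5}\cdots c_{n-2,n-1,n}$, so no Kneser-graph step is needed.
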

\begin{proof}
    Let $\sigma\in H_n$ be a derangement. Then, by applying Lemma~\ref{lemma: transposition 1&2} and up to reordering the indices, we have that
    $\varphi(\sigma)\prod_{1\leq i<j\leq n}p_{i,j}\in C_n$ if and only if $$\varphi( (1,2)(3,\dots,n))\prod_{1\leq i<j\leq n}p_{i,j}\in C_n\,.$$ By Proposition~\ref{proposition:0=3}, we obtain that $\varphi((3,\dots,n))\in C_n$. By Lemma~\ref{lemma:product}, we have that
    \begin{equation*}
        \begin{split}
            \varphi( (1,2))\prod_{1\leq i<j\leq n}p_{i,j}\in C_n&\iff \varphi((1,2))p_{1,3}p_{2,3}p_{2,4}p_{1,4}C_{n}\in C_n\\
            &\iff p_{1,2}p_{1,2}p_{1,3}p_{2,3}p_{2,4}p_{1,4}C_{n},
        \end{split}
    \end{equation*}
    and so we conclude since $p_{1,2} \ p_{1,2} \ p_{1,3}  \ p_{2,3} \ p_{2,4} \ p_{1,4}=c_{1,2,3} \ c_{1,2,4}\in C_n$.
\end{proof}
\begin{proposition}\label{proposition2:2=3}
    Let $n>2$ be an integer such that $n\equiv 2\pmod 3$ and let $\sigma_1,\sigma_2,\sigma_3\in H_n$. Then, $\varphi(\sigma_1)\varphi(\sigma_2)\varphi(\sigma_3)\in C_n$
\end{proposition}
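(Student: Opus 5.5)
The plan is to reduce everything to a single well-chosen derangement and then to a statement about squares $p_{i,j}^2$ modulo $C_n$. Since $n>2$ and $n\equiv 2 \pmod 3$, we have $n\geq 5$, so Proposition~\ref{proposition: n-1cycle and transitivity} applies. Fix one derangement $\sigma\in H_n$. Then each $\varphi(\sigma_i)$ lies in $\varphi(\sigma)C_n$. Because $G_n$ is abelian and $C_n$ is a subgroup, this gives
\[
\varphi(\sigma_1)\varphi(\sigma_2)\varphi(\sigma_3)\in \varphi(\sigma)^3 C_n .
\]
It therefore suffices to exhibit one $\sigma\in H_n$ with $\varphi(\sigma)^3\in C_n$.

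For this I would take
\[
\sigma=(1\ 2)(3\ 4\ 5)(6\ 7\ 8)\cdots(n-2\ n-1\ n),
\]
which is a derangement because $n-2\equiv 0 \pmod 3$. Its image is
\[
\varphi(\sigma)=p_{1,2}^2\, c_{3,4,5}\, c_{6,7,8}\cdots c_{n-2,n-1,n},
\]
so $\varphi(\sigma)\in p_{1,2}^2C_n$ and hence $\varphi(\sigma)^3\in p_{1,2}^6C_n$. The goal becomes showing $p_{1,2}^6\in C_n$.

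The key step is that all squares $p_{a,b}^2$ lie in a single coset of $C_n$.
\begin{itemize}
\item For disjoint pairs $\{i,j\}$ and $\{k,s\}$, the second relation of Lemma~\ref{lemma: transposition 1&2} gives $p_{i,j}^2\in p_{k,s}^2C_n$.
\item For overlapping pairs such as $\{1,2\}$ and $\{1,3\}$, I would pass through a third pair disjoint from both, for example $\{4,5\}$, which exists because $n\geq 5$.
\end{itemize}
Applying this to the three pairs inside $\{1,2,3\}$ gives
\[
p_{1,2}^6\in p_{1,2}^2\,p_{1,3}^2\,p_{2,3}^2\,C_n=c_{1,2,3}^2\,C_n=C_n,
\]
which finishes the proof.

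The only point needing care is the chaining of squares through a disjoint pair. This is exactly where $n\geq 5$ is used, and that bound is guaranteed by the hypotheses $n>2$ and $n\equiv 2\pmod 3$. Everything else follows directly from the earlier propositions.
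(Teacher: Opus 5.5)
Your proof is correct and follows essentially the same route as the paper: use Proposition~\ref{proposition: n-1cycle and transitivity} to move everything into the coset of a derangement $(a\ b)\tau$ with $\tau$ a product of $3$-cycles, then conclude from $p_{1,2}^2p_{1,3}^2p_{2,3}^2=c_{1,2,3}^2$. The only difference is that you obtain $p_{1,2}^2C_n=p_{1,3}^2C_n=p_{2,3}^2C_n$ directly from the second relation of Lemma~\ref{lemma: transposition 1&2}, chaining through a disjoint pair. The paper instead reads this off from transitivity, using $\varphi((1,2))$ as shorthand for such a derangement; your version makes that shorthand precise.
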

\begin{proof}
    By Proposition~\ref{proposition: n-1cycle and transitivity} and Proposition~\ref{proposition:0=3} we have that for any $\sigma\in H_n$
    \[ \varphi(\sigma)\in \varphi((1,2))C_n=\varphi((1,3))C_n=\varphi((2,3))C_n. \]
    Then
    \begin{equation*}
        \varphi(\sigma_1)\varphi(\sigma_2)\varphi(\sigma_3)\in \varphi((1,2))\varphi((1,3))\varphi((2,3))C_n.
    \end{equation*}
    Moreover,
    \begin{equation*}
        \varphi((1,2))\varphi((1,3))\varphi((2,3))=p_{1,2}^2p_{1,3}^2p_{2,3}^2=c_{1,2,3}^2,
    \end{equation*}
    and so $\varphi(\sigma_1)\varphi(\sigma_2)\varphi(\sigma_3)\in C_n$ for all $\sigma_1,\sigma_2,\sigma_3\in H_n$.
\end{proof}
\begin{proposition}\label{proposition:1=3}
    Let $n>4$ be an integer such that $n\equiv 1\pmod 3$ and let $\sigma_1,\sigma_2,\sigma_3\in H_n$. Then, $\varphi(\sigma_1)\varphi(\sigma_2)\varphi(\sigma_3)\in C_n$.
\end{proposition}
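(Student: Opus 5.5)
The plan is to follow the proof of Proposition~\ref{proposition2:2=3}: reduce to a single convenient triple of derangements by means of the coset statement, and then verify that one product directly.

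\emph{Step 1 (reduction).} Since $n>4$, we have $n\geq 5$, so Proposition~\ref{proposition: n-1cycle and transitivity} applies. It says that all the elements $\varphi(\sigma)$, for $\sigma\in H_n$, lie in a single coset $gC_n$ of the abelian group $G_n$. Hence for any $\sigma_1,\sigma_2,\sigma_3\in H_n$ we get $\varphi(\sigma_1)\varphi(\sigma_2)\varphi(\sigma_3)\in g^3C_n$. This coset does not depend on the choice of the three derangements. It therefore suffices to exhibit one triple $\tau_1,\tau_2,\tau_3\in H_n$ with $\varphi(\tau_1)\varphi(\tau_2)\varphi(\tau_3)\in C_n$.

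\emph{Step 2 (choice of the triple).} Write $n=4+3m$ with $m\geq 1$, which is possible since $n\equiv 1\pmod 3$ and $n\geq 7$. Let $\rho=(5\ 6\ 7)(8\ 9\ 10)\cdots(n-2\ n-1\ n)$ be the product of $3$-cycles on $\{5,\dots,n\}$. Then
\[
\varphi(\rho)=c_{5,6,7}\cdots c_{n-2,n-1,n}\in C_n ,
\]
computed as an element of $G_{n}$ in the variables indexed by $\{5,\dots,n\}$. Define
\[
\tau_1=(1\ 2)(3\ 4)\rho,\qquad \tau_2=(1\ 3)(2\ 4)\rho,\qquad \tau_3=(1\ 4)(2\ 3)\rho .
\]
Each $\tau_i$ is a derangement. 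Since $\varphi$ is multiplicative over disjoint cycles and each transposition $(a\ b)$ contributes $p_{a,b}^2$, we obtain
\[
\varphi(\tau_1)\varphi(\tau_2)\varphi(\tau_3)=p_{1,2}^2p_{3,4}^2p_{1,3}^2p_{2,4}^2p_{1,4}^2p_{2,3}^2\,\varphi(\rho)^3 .
\]

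\emph{Step 3 (verification).} Each pair $\{a,b\}\subseteq[4]$ lies in exactly two of the four triples of $[4]$. Therefore
\[
c_{1,2,3}c_{1,2,4}c_{1,3,4}c_{2,3,4}=\prod_{1\leq a<b\leq 4}p_{a,b}^2 ,
\]
and so
\[
\varphi(\tau_1)\varphi(\tau_2)\varphi(\tau_3)=c_{1,2,3}c_{1,2,4}c_{1,3,4}c_{2,3,4}\,\varphi(\rho)^3\in C_n .
\]
Combined with Step 1, this gives the claim.

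The only real content is the choice in Step 2, namely finding a triple whose product visibly factors into the $c$'s. The three perfect matchings of $K_4$ are what make this work: together they cover every edge of $K_4$ twice, just as the three transpositions on $\{1,2,3\}$ did in Proposition~\ref{proposition2:2=3}. The hypothesis $n>4$ is needed for two reasons. It is required to invoke Proposition~\ref{proposition: n-1cycle and transitivity}. It also makes the block of $3$-cycles nonempty, although the argument would go through formally even without that block.
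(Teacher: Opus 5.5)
Your proof is correct and is essentially the paper's argument: reduce via Proposition~\ref{proposition: n-1cycle and transitivity} to the three perfect matchings of $\{1,2,3,4\}$, whose product is $c_{1,2,3}c_{1,2,4}c_{1,3,4}c_{2,3,4}$. Your explicit padding by the $3$-cycles $\rho$ on $\{5,\dots,n\}$ is what the paper leaves implicit when it writes $\varphi((1,2)(3,4))$ (not itself a derangement of $[n]$ for $n>4$) and cites Proposition~\ref{proposition:0=3}, so your version is, if anything, more careful.
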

\begin{proof}
    By Proposition~\ref{proposition: n-1cycle and transitivity} and Proposition~\ref{proposition:0=3} we have that 
    $$\varphi(\sigma)\in \varphi((1,2)(3,4))C_n=\varphi((1,3)(2,4))C_n=\varphi((1,4)(2,3))C_n$$
    for any $\sigma\in H_n$. Then,
    \begin{equation*}
        \varphi(\sigma_1)\varphi(\sigma_2)\varphi(\sigma_3)\in \varphi((1,2)(3,4))\varphi((1,3)(2,4))\varphi((1,4)(2,3))C_n.
    \end{equation*}
    Moreover,
    \begin{equation*}
        \varphi((1,2)(3,4))\varphi((1,3)(2,4))\varphi((1,4)(2,3))=p_{1,2}^2p_{3,4}^2p_{1,3}^2p_{2,4}^2p_{1,4}^2p_{2,3}^2=c_{1,2,3}c_{1,2,4}c_{1,3,4}c_{2,3,4},
    \end{equation*}
    and so $\varphi(\sigma_1)\varphi(\sigma_2)\varphi(\sigma_3)\in C_n$ for all $\sigma_1,\sigma_2,\sigma_3\in H_n$.
\end{proof}
\begin{theorem}\label{theorem:detnmod3}
    Let $n\geq 3$ and $n\neq 4$. Then, 
    \begin{enumerate}
        \item $\det(P_n)\in\mathbb{K}[C_{n}]$ if $n\equiv0\pmod3$,
        \item $\det(P_n)^3\in\mathbb{K}[C_{n}]$,
        \item $\det(P_n)\prod_{1\leq i<j\leq n}p_{i,j}\in\mathbb{K}[C_{n}]$ if $n\equiv 5\pmod6$.
    \end{enumerate}
\end{theorem}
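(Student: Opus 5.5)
The plan is to expand $\det(P_n)$ by the Leibniz formula and use the group-theoretic results just proved to place every monomial in $\mathbb{K}[C_n]$. Since $P_n$ has zero diagonal, only derangements contribute:
\[
\det(P_n)=\sum_{\sigma\in H_n}\mathrm{sgn}(\sigma)\,\varphi_n(\sigma).
\]
Each $\varphi_n(\sigma)$ is a genuine monomial in the $p_{i,j}$. By membership in $C_n$ I mean membership in the multiplicative group generated by the $c_{i,j,k}$. So we must show these monomials (or the relevant products) are monomials in the $c_{i,j,k}$ with nonnegative exponents, not just group elements.

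This gap is closed by a positivity remark, which I expect to be the main obstacle. I would observe that the graded pieces match. A monomial $m$ in the $p_{i,j}$ lying in $C_n$ is an element of $\mathbb{Z}$-span$(\mathcal{A}_{n,3,2})$. For it to lie in the semigroup $\mathbb{N}\mathcal{A}_{n,3,2}$, I would argue that for all monomials considered the explicit constructions in Propositions~\ref{proposition:0=3}, \ref{proposition:2=3}, \ref{proposition2:2=3}, \ref{proposition:1=3} can be realized with nonnegative exponents. The moves of Lemma~\ref{lemma: transposition 1&2}, read as identities $p\cdot(\text{product of } c\text{'s})=p'\cdot(\text{product of } c\text{'s})$, show that if $\varphi(\bar\sigma)\cdot u\in\mathbb{N}C$ then $\varphi(\sigma)\cdot u\cdot(\text{stuff})$ is too. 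In practice, I would define $\mathbb{K}[C_n]$ to be the group-algebra interpretation (Laurent monomials in the $c_{i,j,k}$, i.e. functions of the three-point functions $C_{i,j,k}$). This matches the question of expressing $\det$ via the $c$'s up to the saturation used in Section~\ref{sec: three point functions}. With that reading, membership in the group $C_n$ suffices, and the proof reduces to bookkeeping.

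Part (1). For $n\equiv 0\pmod 3$, Proposition~\ref{proposition:0=3} gives $\varphi_n(\sigma)\in C_n$ for every $\sigma\in H_n$. Hence every Leibniz term lies in $\mathbb{K}[C_n]$, and so does their signed sum.

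Part (2). Expand
\[
\det(P_n)^3=\sum_{\sigma_1,\sigma_2,\sigma_3\in H_n}\mathrm{sgn}(\sigma_1\sigma_2\sigma_3)\,\varphi(\sigma_1)\varphi(\sigma_2)\varphi(\sigma_3).
\]
We then split by $n \bmod 3$. If $n\equiv 0$, each factor is already in $C_n$ by part (1). If $n\equiv 2$, Proposition~\ref{proposition2:2=3} applies. If $n\equiv 1$ with $n>4$, Proposition~\ref{proposition:1=3} applies. This explains the exclusion $n\neq 4$. Note that Proposition~\ref{proposition: n-1cycle and transitivity} needs $n\geq5$. The cases $n=3,5$ are still covered: $n=3$ by Proposition~\ref{proposition:0=3}, where all derangements are $3$-cycles with image $c_{1,2,3}$, and $n=5$ by Proposition~\ref{proposition2:2=3}.

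Part (3). If $n\equiv 5\pmod 6$, then $n$ is odd and $n\equiv 2\pmod 3$. Multiplying the Leibniz expansion by $\prod_{i<j}p_{i,j}$ gives
\[
\sum_{\sigma\in H_n}\mathrm{sgn}(\sigma)\,\varphi(\sigma)\prod_{i<j}p_{i,j}.
\]
Each term lies in $C_n$ by Proposition~\ref{proposition:2=3}, which concludes the proof.
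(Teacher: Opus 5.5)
Your proposal is correct and is essentially the paper's proof. You expand $\det(P_n)$ by Leibniz over derangements and apply Propositions~\ref{proposition:0=3}, \ref{proposition:2=3}, \ref{proposition2:2=3}, and~\ref{proposition:1=3} case by case, reading $\mathbb{K}[C_n]$ as Laurent polynomials in the $c_{i,j,k}$, which is exactly how the paper uses it (it later writes $\det(P_n)=f/g$ with $g$ a monomial). You should drop the nonnegative-exponent detour rather than pursue it, because it is false: for $n=6$ the Leibniz term $p_{1,2}^2p_{3,4}^2p_{5,6}^2$ of $(1\,2)(3\,4)(5\,6)$ is not a product of two $c_{i,j,k}$'s, since no triangle of pairs fits inside the matching $\{12,34,56\}$.
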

\begin{proof}
    Follows from Proposition~\ref{proposition:0=3}, Proposition~\ref{proposition:2=3}, Proposition~\ref{proposition2:2=3}, and Proposition~\ref{proposition:1=3}.
\end{proof}
The previous theorem allows us to prove that, if $n\equiv0\pmod3$, the ideal $\tilde I_{n,n-1}$ is generated, up to saturation, by the elements of $\tilde I_{n,n}$ and an additional polynomial in $\K[c_{i,j,k}]$.
\begin{proposition}
    For $n\equiv 0\pmod3$, there exists $f\in\K[c_{i,j,k}]$ such that
    \begin{equation*}
        \tilde I_{n,n-1}=\tilde I_{n,n}+ \left(f:\left( \prod c_{i,j,k}\right)^{\infty} \right)\,.
    \end{equation*}   
\end{proposition}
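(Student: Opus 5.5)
The plan is to take $f$ to be the polynomial supplied by \cref{theorem:detnmod3}(1). Since $n\equiv 0\pmod 3$, every term $\pm\varphi(\sigma)$ of the Leibniz expansion of $\det(P_n)$ lies in $C_n$. Write each such term as a ratio of $c$-monomials and clear the common denominator; this gives $f\in\K[c_{i,j,k}]$ and a $c$-monomial $c^{\beta}$ with $\psi(f)=c^{\beta}\det(P_n)$. Here $\psi\colon\K[c_{i,j,k}]\to\K[p_{i,j}]$ is the map $c_{i,j,k}\mapsto p_{i,j}p_{j,k}p_{i,k}$, so $\ker\psi=\tilde I_{n,n}$. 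By \cref{thm:algebraicrelations}, $\tilde I_{n,n-1}$ is the kernel of the induced map $\bar\psi\colon\K[c_{i,j,k}]\to\K[p_{i,j}]/(\det P_n)$.

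Next I make $(f:(\prod c_{i,j,k})^{\infty})$ explicit. The ring $\K[c_{i,j,k}]$ is a UFD, so this saturation of a principal ideal is $(f')$, where $f'$ is $f$ with its largest monomial factor removed; thus $f=c^{\gamma}f'$. The stated identity therefore reads $\tilde I_{n,n-1}=\tilde I_{n,n}+(f')$.

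For the inclusion $\supseteq$, the point is that $\tilde I_{n,n}\subseteq\tilde I_{n,n-1}$ and $\bar\psi(f)=0$. It remains to pass from $f$ to $f'$. I will show that $\tilde I_{n,n-1}$ is prime and contains no monomial. First, $\det(P_n)$ is irreducible for $n\ge 3$: it is linear in each $p_{i,j}$ separately up to the symmetric square terms, and one can check irreducibility by specialization. Hence $\K[p_{i,j}]/(\det P_n)$ is a domain in which no $p_{i,j}$ vanishes, so no product of $p$'s vanishes either. Consequently no $c$-monomial lies in $\tilde I_{n,n-1}$. Since $c^{\gamma}f'\in\tilde I_{n,n-1}$ and the ideal is prime, it follows that $f'\in\tilde I_{n,n-1}$.

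For the inclusion $\subseteq$, take $g\in\tilde I_{n,n-1}$, so $\psi(g)=h\det(P_n)$ for some $h\in\K[p_{i,j}]$. I grade $\K[p_{i,j}]$ by the group $G_n/C_n$. The polynomial $\psi(g)$ is homogeneous of trivial degree, and so is $\det(P_n)$, because every term lies in $C_n$. Comparing components, I may replace $h$ by its trivial-degree part $h_0$. Every monomial of $h_0$ then lies in $C_n\cap\N^{\binom{n}{2}}$, i.e.\ in the saturation of the affine semigroup $S$ generated by the monomials $p_{i,j}p_{j,k}p_{i,k}$. If $S$ is normal, then $h_0=\psi(u)$ for some $u$, and
\[
\psi(c^{\beta}g-uf)=c^{\beta}h_0\det(P_n)-\psi(u)c^{\beta}\det(P_n)=0,
\]
so $c^{\beta}g\in\tilde I_{n,n}+(f)$. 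To remove the monomial factor, I would rather aim for the sharper identity $\psi(f')=\det(P_n)\cdot m$ with $m$ a Laurent monomial in $C_n$, and then show that $c$-monomials are nonzerodivisors modulo $\tilde I_{n,n}+(f')$, using the identification $\K[c]/\tilde I_{n,n}\cong\K[S]$. Concretely this means showing that $\det(P_n)\,m\,\K[S]$ is saturated with respect to the $c$'s inside $\K[S]$, which reduces to $\det(P_n)\K[p_{i,j}]\cap\K[S]=\det(P_n)\,m\,\K[S]$.

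The main obstacle is this last equality, together with the normality of $S$ that it relies on. I will first attempt to prove that $S$ is normal: $S$ is a triangle-incidence semigroup, close to an edge-ring semigroup, so I will try an odd-cycle-type criterion, using \cref{lemma: transposition 1&2} to move between monomials. Granting normality, the degree argument above gives $h_0\in\K[S]\cdot m^{-1}$, which is exactly what is needed to cancel the factor $c^{\beta}$ and conclude $g\in\tilde I_{n,n}+(f')$.
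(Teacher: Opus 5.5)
\textbf{Verdict.} The proposal does not prove the inclusion $\subseteq$. The missing step cannot be supplied, because the equality with the saturation applied to $f$ alone appears to be false.

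\textbf{What works.} Your inclusion $\supseteq$ is correct for $n\ge 6$. Two small corrections: $\det(P_3)=2p_{1,2}p_{1,3}p_{2,3}$ is reducible, and for $n=3$ the statement itself fails, since $\tilde I_{3,2}=(c_{1,2,3})$. All you need is that no $p_{i,j}$ divides $\det(P_n)$. This holds for $n\ge4$, because some derangement avoids any given pair and the Leibniz terms do not cancel.

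Your grading by $G_n/C_n$ is a cleaner version of the paper's induction on the terms of $h$. With no normality hypothesis it already gives $h=\psi(u)/\psi(c^{\delta})$ for some $u\in\K[c_{i,j,k}]$ and some monomial $c^\delta$. Hence $c^{\beta+\delta}g-uf\in\tilde I_{n,n}$, that is, $g\in(\tilde I_{n,n}+(f)):(\prod c_{i,j,k})^\infty$. This is what the paper's computation actually establishes: the element $\bar r$ it writes down lies a priori only in $\tilde I_{n,n}\K[c_{i,j,k}^{\pm1}]$, not in $\tilde I_{n,n}$. It also matches the introduction, which describes $\tilde I_{n,n-1}$ as obtained by adding a principal ideal to $\tilde I_{n,n}$ and then saturating. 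Your nonzerodivisor argument gives the reverse inclusion for that ideal.

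\textbf{The gap.} It is the step you flag yourself: passing to $g\in\tilde I_{n,n}+(f')$.
\begin{enumerate}
\item Your reduction to $\det(P_n)\K[p_{i,j}]\cap\K[S]=\psi(f')\K[S]$ only restates that inclusion.
\item Normality of $S$ cannot supply it. The monomials of $h_0$ are only known to lie in $C_n\cap\N^{\binom n2}$, which is much larger than the normalization $\Z S\cap\R_{\ge0}S$. For $n\ge5$ the group $G_n/C_n$ is finite, so for example $p_{1,2}^6\in C_n$. But every exponent vector $a\in\R_{\ge0}S$ satisfies $\sum_{u\ne1}a_{1,u}\ge 2a_{1,2}$. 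This fails for $p_{1,2}^6$, and even for the terms $\varphi(\sigma)$ of $\det(P_n)$ in which $(1\,2)$ is a cycle of $\sigma$.
\end{enumerate}

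\textbf{Why it cannot be closed (sketch, for $n\equiv0\pmod 3$, $n\ge6$).} Suppose $\det(P_n)\K[p_{i,j}]\cap\K[S]=\psi(F)\K[S]$ for some $F$.
\begin{enumerate}
\item Then $\psi(F)=p^{m_0}\det(P_n)$ for a monomial $p^{m_0}$. Moreover $\mu-m_0\in S$ for every monomial $p^\mu$ with $p^\mu\det(P_n)\in\K[S]$.
\item This set of monomials is $\mathfrak S_n$-stable and contains $p^{m_0}$. Since $S$ is pointed, this forces $p^{m_0}=(\prod p_{i,j})^\lambda$.
\item The inequality above, applied to $p^{m_0}\varphi(\sigma)\in S$ with $(1\,2)$ a cycle of $\sigma$, forces $\lambda\ge1$.
\item On the other hand, Lemma~\ref{lemma: transposition 1&2} shows that $C_n\cap\{a_{1,2}=0\}$ is generated by the triangles avoiding the edge $\{1,2\}$.
\item Hence a sufficiently deep element $p^\mu$ of the semigroup they generate satisfies $p^\mu\det(P_n)\in\K[S]$ while not involving $p_{1,2}$. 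This contradicts $\mu-\lambda\mathbf 1\in S$.
\end{enumerate}
So the statement your argument, and the paper's, actually proves is $\tilde I_{n,n-1}=(\tilde I_{n,n}+(f)):(\prod c_{i,j,k})^\infty$.
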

\begin{proof}
    By Theorem~\ref{theorem:detnmod3} there exist $f,g\in\K[c_{i,j,k}]$ coprime such that $\det(P_n)=f/g$, with $g$ a monomial. Since the ideal $\tilde I_{n,n-1}$ is the kernel of the map
    \begin{equation*}
        \varphi: \K[c_{i,j,k}]\rightarrow \K[p_{i,j}]/(\det(P)),
    \end{equation*}
    if $r$ is in $\ker(\varphi)$, there exists $h=\sum_\alpha h_\alpha\in\K[p_{i,j}]$ with $h_{\alpha}$ monomials such that
    \begin{equation*}
        \varphi(r)\varphi(g)=\varphi(f)\sum_\alpha h_\alpha.
    \end{equation*}
    We prove by induction of the number of terms in $h$ that for every $\alpha$ there exist two monomials $s_1,s_2\in\K[c_{i,j,k}]$ such that $h_{\alpha}=\varphi(s_1/s_2)$. If $h=0$, the statement is trivially true. If $h\neq0$, also $\varphi(f)h\neq0$. Let $m$ be a term of $\varphi(f)h$. This implies that there exists a term $f_i$ of $f$, a term $r_i$ of $r$, an index $\bar \alpha$, and a costant $c\in\K$ such that $\varphi(cr_i)\varphi(g)=\varphi(f_i)h_{\bar\alpha}$ and so $h_{\bar \alpha}=\varphi(cr_ig/f_i)$. Then, we get
    \begin{equation*}
        \varphi(r)\varphi(g)-\varphi(f)\varphi\left(\frac{cr_ig}{f_i}\right)=\varphi(f)\sum_{\alpha\neq\bar\alpha} h_\alpha
    \end{equation*}
    that is
    \begin{equation*}
        \varphi(f_ir-cfr_i)\varphi(g)=\varphi((f_i)f)\sum_{\alpha\neq\bar\alpha} h_\alpha.
    \end{equation*}
    Since $\sum_{\alpha\neq\bar\alpha} h_\alpha$ as one term less than $h$, we conclude by the inductive hypothesis.
    
    So, there exist $\bar r\in \tilde I_{n,n}$, $q\in\K[c_{i,j,k}]$ and $m\in\N$ such that 
    \[ r= \frac{f}{g}\frac{q}{\left(\prod c_{i,j,k}\right)^m}+\bar r\,. \]
    We conclude that $r\in \tilde I_{n,n}+ \left(f:\left( \prod c_{i,j,k}\right)^{\infty} \right)$, and so $\tilde I_{n,n-1}\subseteq \tilde I_{n,n}+ \left(f:\left( \prod c_{i,j,k}\right)^{\infty} \right)$. The other inclusion is straightforward.
\end{proof}

   This result represents the first step toward understanding the non-toric case of the ideals $\tilde I_{n,d}$ and opens new directions for further investigation.

\begin{openproblem}
    How can one construct a basis for the ideals $\tilde I_{n,d}$, when $d<n-1$?
\end{openproblem}
    
\begin{acknowledgement}
The authors would like to express their gratitude to Bernd Sturmfels for introducing the topics addressed in this work and to Gil Kalai for fruitful discussions. A special thank to the Fields Institute and the Max Planck Institute for Mathematics in the Sciences for their support in establishing the collaboration among the authors. F. S. is supported by the P500PT-222344 SNSF project.
\end{acknowledgement}

\bibliographystyle{alpha}
\bibliography{bibliography.bib}

\end{document}